\newtheorem{thm}{Theorem}[section]
\newtheorem{cor}[thm]{Corollary}
\newtheorem{dfn}[thm]{Definition}
\newtheorem{lem}[thm]{Lemma}
\newtheorem{pro}[thm]{Proposition}
\newtheorem{asm}[thm]{Assumption}
\newtheorem{rmk}[thm]{Remark}
\begin{document}
\begin{frontmatter}



\title{A convergent interacting particle method and computation of \\KPP front speeds in chaotic flows}

\author[hku]{Junlong Lyu}
\ead{u3005480@connect.hku.hk}
\author[uoc]{Zhongjian Wang}
\ead{zhongjian@statistics.uchicago.edu}
\author[uci]{Jack Xin}
\ead{jxin@math.uci.edu}
\author[hku]{Zhiwen Zhang\corref{cor1}}
\ead{zhangzw@hku.hk}
\address[hku]{Department of Mathematics, The University of Hong Kong, Pokfulam Road, Hong Kong SAR, China.}
\address[uoc]{Department of Statistics, The University of Chicago, Chicago, IL 60637, USA.}
\address[uci]{Department of Mathematics, University of California at Irvine, Irvine, CA 92697, USA.}
\cortext[cor1]{Corresponding author}

\begin{abstract}
\noindent
In this paper, we study the propagation speeds of reaction-diffusion-advection (RDA) fronts in time-periodic cellular and chaotic flows with Kolmogorov-Petrovsky-Piskunov (KPP) nonlinearity. We first apply the variational principle to reduce the computation of KPP front speeds to a principal eigenvalue problem of a linear advection-diffusion operator with space-time periodic coefficient on a periodic domain. To this end, we develop efficient Lagrangian particle methods to compute the principal eigenvalue through the Feynman-Kac formula. By estimating the convergence rate of Feynman-Kac semigroups and the operator splitting method for approximating the linear advection-diffusion solution operators, we obtain convergence analysis for the proposed numerical method. Finally, we present numerical results to demonstrate the accuracy and efficiency of the proposed method in computing KPP front speeds in   time-periodic cellular and chaotic flows, especially the time-dependent Arnold-Beltrami-Childress (ABC) flow and time-dependent Kolmogorov flow in three-dimensional space. \\
\noindent \textit{\textbf{AMS subject classification:}} 35K57, 47D08, 65C35, 65L20, 65N25.
\end{abstract}
\begin{keyword}
	KPP front speeds;  cellular and chaotic flows; Feynman-Kac semigroups; interacting particle method; 
	eigenvalue problems; convergence analysis.	
\end{keyword}

\end{frontmatter}

\section{Introduction}\label{sec:intro}
\noindent
Front propagation in complex fluid flows arises in many scientific areas such as turbulent combustion, chemical kinetics,  biology, transport in porous media, and industrial deposition processes (see \cite{xin2009} for a review).  A fundamental problem is to analyze and compute large-scale front speeds in complex flows. An extensively studied model problem is the reaction-diffusion-advection (RDA) equation with Kolmogorov-Petrovsky-Piskunov (KPP) nonlinearity \cite{kolmogorov1937}. To be specific, the KPP equation is
\begin{align}
u_t  = \kappa \Delta_{\textbf{x}} u + (\textbf{v} \cdot \nabla_{\textbf{x}}) u +\tau^{-1}f(u), \quad  t\in \mathbb{R}^{+}, \quad \textbf{x}=(x_1,...,x_d)^{T}\in \mathbb{R}^{d},  \label{KPP-eq}
\end{align}
where $\kappa$ is  diffusion constant, $\tau$ is the time scale of reaction rate,   $\textbf{v}$ is an incompressible velocity field (its precise definition will be discussed later),  $u$ is the concentration of reactant or population, and the KPP reaction term  $f(u)=u(1-u)$ satisfying $f(u)\leq uf^{\prime}(0)$. In our analysis and numerical examples, we  will keep $\tau$ and $\kappa$ fixed, while change the magnitude of the velocity field $\textbf{v}$, which equivalently means changing the P\'{e}clet number.

Since the pioneering work of Kolmogorov, Petrovsky, and Piskunov \cite{kolmogorov1937} and Fisher \cite{fisher1937wave} on traveling fronts of the reaction-diffusion equations, this field has gone through enormous growth and development. Reaction-diffusion front propagation in fluid flows has been an active research topic for decades; see e.g.\cite{gartner1979,xin1992existence,majda1994large,xin2000front,berestycki2005speed,nolen2008computing,NolenJack2009,xin2009,nolen2012existence} and references therein. Significant amounts of mathematical analysis and numerical works in this direction have been accomplished when the streamlines of fluid flow are either well-structured (regular motion) or fully random (ergodic motion). Yet, the often encountered less studied case is when the streamlines consist of both regular and irregular motions, while neither one takes up the entire phase space, such as the chaotic Arnold-Beltrami-Childress (ABC) flow \cite{Frisch:86,brummell2001linear} and Kolmogorov flows \cite{KflowGalloway:1992,childress1995stretch}. 

In recent years, much progress has been made in finite element computation of the KPP front propagation in time-periodic cellular and chaotic flows based on a linearized corrector equation.  If the velocity field $\textbf{v}=\textbf{v}(\textbf{x})$ in  the KPP equation \eqref{KPP-eq} is time-independent, the minimal front speed in direction $\textbf{e}$ is given by the variational formula \cite{gartner1979}: $c^{*}(\textbf{e})=\inf_{\lambda>0}\mu(\lambda)/\lambda$, where $\mu(\lambda)$ is the principal eigenvalue of the elliptic operator, $\mathcal{A}^{\lambda}_{1}$, namely,
\begin{align}
\mathcal{A}^{\lambda}_{1} \Phi \equiv \kappa \Delta_{\textbf{x}}\Phi + (2\lambda\textbf{e}+\textbf{v})\cdot
\nabla_{\textbf{x}}\Phi + \big(\kappa \lambda^2+\lambda\textbf{v}\cdot \textbf{e} + \tau^{-1}f^{\prime}(0)\big)\Phi   = \mu(\lambda)\Phi.  \label{elliptic-op}  
\end{align}
In Eq.\eqref{elliptic-op}, $\Phi \in L^2( \mathbb{T}^d)$, $\mathbb{T} = \mathbb{R} / \mathbb{Z}$ is the one-dimensional torus, and $\textbf{v}$ is period $1$ in all direction $x_i , 1 \le i \le d$.
Accurate estimation of $c^{*}(\textbf{e})$ boils down to computing the principal eigenvalue of the operator $\mathcal{A}^{\lambda}_{1}$ in \eqref{elliptic-op}. Adaptive finite element methods (FEM) were successfully applied to solve \eqref{elliptic-op} in \cite{shen2013finite2d,shen2013finite3d}. If the velocity field $\textbf{v}=\textbf{v}(t,\textbf{x})$ in  the KPP equation \eqref{KPP-eq} is periodic in time $t$, then the variational formula  $c^{*}(\textbf{e})=\inf_{\lambda>0}\mu(\lambda)/\lambda$ still holds \cite{nolen2005existence}, where $\mu(\lambda)$ is the principal eigenvalue \cite{hess1991periodic} of the time-periodic parabolic operator, $\mathcal{A}^{\lambda}_{2}$, namely,
\begin{align}
\mathcal{A}^{\lambda}_{2} \Phi \equiv \kappa \Delta_{\textbf{x}}\Phi + (2\lambda\textbf{e}+\textbf{v})\cdot
\nabla_{\textbf{x}}\Phi + \big(\kappa \lambda^2+\lambda\textbf{v}\cdot \textbf{e} + \tau^{-1}f^{\prime}(0)\big)\Phi - \Phi_t = \mu(\lambda)\Phi,  \label{periodic-parabolic-op}  
\end{align}
on the space-time domain $\mathbb{T}^d\times[0,T]$ ($T$ is the period of $\textbf{v}$ in $t$), subject to the same boundary condition in $\textbf{x}$ as \eqref{KPP-eq} and periodic in $t$. An edge-averaged FEM with algebraic multigrid acceleration was developed in \cite{zu2015} to study KPP front speeds in two-dimensional time-periodic cellular flows with chaotic streamlines.  Adaptive FEM methods provide an efficient way to compute the  KPP front speeds in time-periodic cellular and chaotic flows. However, when the magnitude of velocity field is large and/or the dimension of spatial variables is big (e.g. $d=3$), it is extremely expensive to compute  KPP front speeds by using the FEM. 

 


Recently, we have made significant progress in developing Lagrangian particle methods for computing effective diffusivities in chaotic and random flows \cite{WangXinZhang:18,Zhongjian2018sharp,lyu2019convergence}. This motivates us to develop interacting particle methods to compute KPP front propagation in time-periodic cellular and chaotic flows in this paper, especially in three-dimensional flows.

In this paper, we first apply operator splitting methods to approximate the solution operator of the linear advection-diffusion operator (see Eq.\eqref{KPPTimePeriodic}), which is a non-autonomous evolution equation and corresponding to the linearization of the KPP equation. Then, we develop numerical methods to compute the KPP front speeds through the Feynman-Kac formula, which establishes a link between parabolic PDEs and SDEs. Direct approximation of the Feynman-Kac formula is unstable, since the main contribution to the expectation comes from sample paths that visit maximal points of the potential; see Eq.\eqref{Feynman-Kacformula}. Alternatively, we study a normalized version, i.e., the Feynman-Kac semigroup.  Specifically, the principal eigenvalue of $\mathcal{A}_{1}^{\lambda}$ and $\mathcal{A}_{2}^{\lambda}$ can be obtained by studying the convergence of  Feynman-Kac semigroups  for SDEs associated with operators $\mathcal{A}_{1}^{\lambda}$ and $\mathcal{A}_{2}^{\lambda}$ \cite{del2004feynman,ferre2019error}. We approximate the evolution of probability measures by an interacting particle system and use the resampling technique to reduce the variance. Moreover, we estimate the approximation of semigroups associated with the solution operators of non-autonomous evolution equations and obtain convergence analysis for our method in computing the KPP front speeds. 

We point out that using Feynman-Kac semigroups to estimate the principal eigenvalue of differential operators has a long history. It was developed in large deviation theory, where Feynman-Kac semigroups were used to calculate cumulant generating functions \cite{den2008large}. They were also used in important practical applications, such as the diffusion Monte Carlo (DMC) method \cite{foulkes2001quantum}. When the velocity field $\textbf{v}$ of the flow is time-independent, one can apply the backward error analysis approach to obtain the error estimate of the  
principal eigenvalue \cite{ferre2019error}. However, when the velocity field $\textbf{v}$ of the flow is time-dependent, their method cannot be directly applied. There are several novelties in our paper. Firstly, we analyze the solution operator by an operator splitting method and estimate the error in the $L_2$ operator norm. Secondly, we prove the convergence of estimating principal eigenvalues by the Feynman-Kac semigroups for non-autonomous periodic systems. Furthermore, we apply the $N$-interacting particle system ($N$-IPS) method to calculate the principal eigenvalue, where several important 3D chaotic flows are investigated. Notice that when the magnitude of the velocity field is large and/or the dimension of spatial variables is three, it is extremely expensive to calculate the principal eigenvalue using the FEM and spectral method, especially when the flows are time-dependent.

Finally, we carry out numerical experiments to demonstrate the accuracy and efficiency of the proposed method in computing KPP front speeds for time-periodic cellular and chaotic flows. 
Most importantly, we investigate the dependence of  KPP front speeds on the chaos (disorder) and flow intensities. Let $A$ denote the magnitude of the velocity field. For space-time-periodic shear flow, the speed $c^*(A)$ obeys a  quadratic enhancement law: $c^*(A)=c_0(1+\alpha A^2)+ O(A^3)$, $0<A\ll1$, where $c_0$ is the KPP front speed in homogeneous media ($A=0$) and $\alpha>0$ depends only on  flow $\textbf{v}$ \cite{nolenxin2006}. The study for complicated flows, e.g. 3D flows remains largely open. At large $A$, the solution of the principal eigenvalue problem \eqref{elliptic-op} develops internal layers and their locations are unknown \textit{a priori}, which brings difficulties for the FEM and spectral method. We will study this issue in Section \ref{sec:NumCompKPPspead}.  Numerical results show that our interacting particle method is still very efficient when the magnitude of velocity field $A$ is large and computational cost linearly depends on the dimension $d$ of spatial variables in the KPP equation \eqref{KPP-eq}. Thus, we are able to compute the KPP front speeds for time-dependent cellular and chaotic flows of physical interests, including the ABC flows and Kolmogorov flows in three-dimensional space. To the best of our knowledge, our work appears to be the first one in the literature to develop numerical methods to compute KPP front speeds in 3D time-dependent flows. Furthermore, we numerically verify that the relationship between the KPP front speed $c^*(A)$ and the effective diffusivity $D^{E}(A)$, i.e. $c^*(A)=O(\sqrt{D^{E}(A)})$, is true in 2D steady cellular flows and still exists in the 3D Kolmogorov flows. We also compute the invariant measure of Feynman-Kac semigroups by our interacting particle method.


The rest of the paper is organized as follows. In Section 2, we propose Lagrangian interactive particle methods in computing KPP front speeds in time-periodic cellular and chaotic flows. In Section 3, we estimate the approximation of semigroups associated with the solution operators of non-autonomous evolution equations and obtain convergence analysis for our method.  In Section 4, we present numerical results to demonstrate the accuracy and efficiency of our method.
In addition, we investigate the dependence of  KPP front speeds on the chaos (disorder) and flow intensities, especially in 3D time-dependent chaotic flows. Concluding remarks are made in Section 5. Finally, we collect several fundamental results for abstract linear evolution equations by semigroup theory in the Appendix. 
 


\section{Efficient Lagrangian methods in computing KPP front speeds}\label{sec:LagrangianMethods}

\subsection{Computing principal eigenvalue via the Feynman-Kac formula}\label{sec:FKformula}
\noindent
In this section, we develop Lagrangian interacting particle methods to compute KPP front speeds via the Feynman-Kac formula. We consider the linearized corrector equation of the KPP equation \eqref{KPP-eq}, where the velocity field $\textbf{v}(t,\textbf{x})$ is space-time periodic, mean zero, and divergence-free. To compute the KPP front speed  $c^{*}(\textbf{e})$ along direction $\textbf{e}$, let $w$ solve a linearized equation parameterized by $\lambda>0$: 
\begin{equation} \label{KPPTimePeriodic}
w_t = \mathcal{A}w:=\kappa\Delta w + (2\lambda\textbf{e}+\textbf{v})\cdot
\nabla_{\textbf{x}}w + \big(\kappa \lambda^2+\lambda\textbf{v}\cdot \textbf{e} + \tau^{-1}f^{\prime}(0)\big)w, 
\end{equation} 
with initial condition $w(\textbf{x},0)=1$. Then, the principal eigenvalue $\mu(\lambda)$ is given by
\begin{equation} \label{TimePeriodicPrinEig}
\mu(\lambda)=\lim_{t\to \infty}\frac{1}{t}\ln \int_{ \mathbb{T}^d} w(t,\textbf{x})d\textbf{x}.
\end{equation} 
The number $\mu(\lambda)$ is also the principal Lyapunov exponent of the parabolic equation 
\eqref{KPPTimePeriodic}, which is convex and superlinear for large $\lambda$ \cite{nolen2005existence,zu2015}. Finally, we compute the KPP front speed using the variational formula  $c^{*}(\textbf{e})=\inf_{\lambda>0}\mu(\lambda)/\lambda$.
 
To design Lagrangian particle methods, we decompose the operator $\mathcal{A}$ in \eqref{KPPTimePeriodic} into $\mathcal{A}=\mathcal{L}+\mathcal{C}$, where $\mathcal{L}:=\kappa\Delta   + (2\lambda\textbf{e}+\textbf{v})\cdot\nabla_{\textbf{x}}$ and $\mathcal{C}:=c(t,\textbf{x})=\big(\kappa \lambda^2+\lambda\textbf{v}\cdot \textbf{e} + \tau^{-1}f^{\prime}(0)\big)$. To approximate the operator $\mathcal{L}$, we define a SDE system as follows
\textcolor{black}{\begin{equation} 
		{d\textbf{X}^{s,\textbf{x}}_t = \textbf{b}(t,\textbf{X}^{s,\textbf{x}}_t)dt 
			+ \sqrt{2\kappa}d\textbf{w}(t),  \quad \textbf{X}^{s,\textbf{x}}_s=\textbf{x}}, \quad t \ge s, \label{SDE-system}  
\end{equation}}
where the drift term $\textbf{b}=2\lambda\textbf{e}+\textbf{v}$ is determined by the advection field in the operator $\mathcal{L}$  and $\textbf{w}(t)$ is a $d$-dimensional Brownian motion. The principal eigenvalue $\mu(\lambda)$ of \eqref{KPPTimePeriodic} can be represented via the Feynman-Kac formula as follows:
\textcolor{black}{\begin{equation} 
	\mu(\lambda)=\lim_{t\to\infty}\frac{1}{t}\ln\mathbb{E}\Big(\exp\big(\int_{0}^t
	c(t-s,\textbf{X}^{0,\textbf{x}}_s)ds\big)\Big),
	\label{Feynman-Kacformula}  
\end{equation}}
where the expectation $\mathbb{E}(\cdot)$ is over randomness induced by the Brownian motion $\textbf{w}(t)$. 

If we apply the formula \eqref{TimePeriodicPrinEig} to compute the principal eigenvalue $\mu(\lambda)$, we need to solve a parabolic-type PDE \eqref{KPPTimePeriodic} using numerical methods, such as FEM and spectral method. When the magnitude of the velocity field is large and/or the dimension of spatial variables $d$ is big (say $d=3$), the FEM and spectral method become extremely expensive. The Feynman-Kac formula \eqref{Feynman-Kacformula} provides an alternative strategy to design Lagrangian methods to compute the principal eigenvalue $\mu(\lambda)$, and thus allows us to 
compute the KPP front speeds. As we will demonstrate in Section \ref{sec:NumericalResults}, the proposed Lagrangian method is efficient for computing KPP front speeds in 3D time-dependent chaotic flows. 

\begin{rmk}\label{FKformula4timeindflow}
When the velocity field in the KPP equation \eqref{KPP-eq} is time-independent, the construction of the Lagrangian method for computing KPP front speeds is straightforward. We simply replace the drift term $\textbf{b}$ in \eqref{SDE-system} and the potential $c$ in \eqref{Feynman-Kacformula} by their time-independent counterparts.
\end{rmk} 

\subsection{Feynman-Kac semigroups}\label{sec:FKsemigroup}
\noindent
Directly using the Feynman-Kac formula \eqref{Feynman-Kacformula} and Monte Carlo method to compute the principal eigenvalue $\mu(\lambda)$ is unstable as the main contribution to \textcolor{black}{$\mathbb{E}\Big(\exp\big(\int_{0}^t
c(t-s,\textbf{X}^{0,\textbf{x}}_s)ds\big)\Big)$} comes from sample paths that visit maximal or minimal points of the potential function $c$, which leads to inaccurate or even divergent results.
 
Accurate principal eigenvalue $\mu(\lambda)$ can be obtained by studying the convergence of the Feynman-Kac semigroup associated with the SDE system \eqref{SDE-system} and the potential $c$. Specifically, let $\mathcal{P}(\mathbb{T}^d)$ denote the set of probability measures over $\mathbb{T}^d$ and $S = \mathcal{C}^{\infty}(\mathbb{T}^d)$. We define the evolution operator associated with the process $(\textbf{X}^{s,\textbf{x}}_t)_{t \ge s}$ in \eqref{SDE-system} as   
\textcolor{black}{\begin{equation}
		(\nu)(P_{t_2,t_1}\phi) = \mathbb{E}_{{\bf x} \sim \nu}\big(\phi(\textbf{X}^{t_1,\textbf{x}}_{t_2})\big),\quad\forall\nu \in \mathcal{P}(\mathbb{T}^d), ~\phi \in S, ~ t_2 \ge t_1.
		\label{operatorPt}
\end{equation}}
Similarly, we define its weighted counterpart as  
\textcolor{black}{\begin{equation}
		(\nu)(P^c_{t_2,t_1}\phi) = \mathbb{E}_{{\bf x} \sim \nu}\Big(\phi(\textbf{X}^{t_1,\textbf{x}}_{t_2})\exp\big(\int_{t_1}^{t_2}c(t_2 - s,\textbf{X}^{t_1,\textbf{x}}_s)\big)ds\Big),\quad\forall\nu \in \mathcal{P}(\mathbb{T}^d), ~\phi \in S, ~t_2 \ge t_1.
		\label{operatorPct}
\end{equation} }
\textcolor{black}{In other words, the infinitesimal generators of $P_{t_2,t_1}$ and $P^c_{t_2,t_1}$ respect to $t_2$ are $\mathcal{L}(t_1)$ and $\mathcal{A}(t_1)=\mathcal{L}(t_1)+\mathcal{C}(t_1)$, respectively}. \textcolor{black}{Equipped with the definitions of the evolution operators $P_{t_2,t_1}$ and $P^c_{t_2,t_1}$, we can define the Feynman-Kac operator $\Phi^{c}_{t_2,t_1}$ as follows
	\begin{equation} 
		\Phi^{c}_{t_2,t_1}(\nu)(\phi):=\frac{(\nu)(P^c_{t_2,t_1}\phi)}{(\nu)(P^c_{t_2,t_1} 1)}=\frac{\mathbb{E}_{{\bf x} \sim \nu}\big(\phi(\textbf{X}^{t_1,\textbf{x}}_{t_2})\exp\big(\int_{t_1}^{t_2}c(t_2 - s,\textbf{X}^{t_1,\textbf{x}}_s)\big)ds\big)}{\mathbb{E}_{{\bf x} \sim \nu}\big(\phi(\textbf{X}^{t_1,\textbf{x}}_{t_2})\big)}. \label{FeynmanKacsemigroup}  
	\end{equation}
	One can easily verify that for all $\nu \in \mathcal{P}(\mathbb{T}^d)$ and $t_1\le t_2 \le t_3\in \mathbb{R}_{+}$,  $\Phi^{c}_{t_2,t_1}(\Phi^{c}_{t_3,t_2}(\nu))=\Phi^{c}_{t_3,t_1}(\nu)$. 
	Notice that we use $T$ to denote the period of the velocity in time. For convenience we denote $\Phi^{c}_{T} = \Phi^{c}_{T,0}$ and $P_T^c = P_{T,0}^c$. Therefore, we consider the Feynman-Kac semigroup for $t = nT, n \in \mathbb{N}$. Namely, we consider $\Phi^{c}_{nT} = (\Phi^{c}_{T})^n $.  One can 
	easily verify the Feynman-Kac semigroup $\Phi^{c}_{nT}$ satisfies the following property, where the proof is a direct conclusion of Theorem \ref{bam} and Theorem \ref{thm:existenceinvariantmeasure}.}
\begin{pro}\label{convergenceratesemigroup}
For any $\nu \in \mathcal{P}(\mathbb{T}^d)$ and $\phi \in S$, 
there exists $C>0$ such that 
\begin{equation}
\Big|\Phi^{c}_{nT}(\nu)(\phi)-\int_{\Omega}\phi d\nu_{c}\Big|\leq C||\phi||\exp(-\delta_{c}nT), 
\end{equation}  
where $\delta_{c}=\inf\{\mu(\lambda)-\Re(z):z\in \sigma(\mathcal{A}) \setminus\{\mu(\lambda)\}\}>0$ is the spectral gap of the operator $\mathcal{A}$.  
\end{pro}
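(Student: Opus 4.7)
The plan is to reduce the convergence to a spectral gap statement for the period-$T$ monodromy operator $P_T^c$. Because $\mathbf{v}(t,\mathbf{x})$ is $T$-periodic in $t$, the non-autonomous linear evolution admits a genuine semigroup structure when sampled along integer multiples of the period: $P_{nT}^c = (P_T^c)^n$. So the question reduces to iterating a single bounded operator on an appropriate Banach space (e.g.\ $L^2(\mathbb{T}^d)$ or $C(\mathbb{T}^d)$), and the time-periodic parabolic eigenvalue problem \eqref{periodic-parabolic-op} identifies the principal eigenpair of $P_T^c$ with $(e^{\mu(\lambda)T}, h)$.

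First I would invoke Theorem \ref{thm:existenceinvariantmeasure} to obtain the Krein--Rutman picture for $P_T^c$: a simple principal eigenvalue $e^{\mu(\lambda)T}$ with a strictly positive right eigenfunction $h$ and a strictly positive left eigenfunctional $h^*$, together with the associated invariant probability measure $\nu_c$ of the normalized semigroup $\Phi_T^c$, which is identified (after normalization) with the product $h\, h^*$ so that $\int_{\mathbb{T}^d} \phi \, d\nu_c = h^*(\phi)/h^*(1)$. Next I would apply Theorem \ref{bam} as a quasi-compactness / spectral decomposition statement to write
$$P_{nT}^c = e^{\mu(\lambda)nT}\,\Pi + R_n, \qquad \|R_n\| \le C\, e^{(\mu(\lambda)-\delta_c)nT},$$
where $\Pi$ is the rank-one spectral projector onto $\mathrm{span}(h)$ and $R_n$ acts on the complementary $\Pi$-invariant subspace. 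The spectral gap $\delta_c$, originally defined through $\sigma(\mathcal{A})$, enters here via the spectral mapping $\sigma(P_T^c)\setminus\{0\} = \{e^{zT}: z\in\sigma(\mathcal{A})\}$, which translates a gap below $\mu(\lambda)$ in the real part into a gap below $e^{\mu(\lambda)T}$ in modulus.

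Given this decomposition, pairing with $\nu$ and applying the operator to both $\phi$ and to the constant $1$ yields
$$(\nu)(P_{nT}^c\phi) = e^{\mu(\lambda)nT}(\nu)(h)\, h^*(\phi) + (\nu)(R_n\phi),$$
with an analogous identity for $(\nu)(P_{nT}^c 1)$. Taking the ratio to form $\Phi_{nT}^c(\nu)(\phi)$ and subtracting $h^*(\phi)/h^*(1) = \int \phi\, d\nu_c$, the leading $e^{\mu(\lambda)nT}$ factors cancel, and a short algebraic manipulation of the form $(A+a)/(B+b) - A/B = (aB-Ab)/[B(B+b)]$ leaves a numerator bounded by $C\|\phi\|\, e^{(\mu(\lambda)-\delta_c)nT}$ divided by a denominator of order $e^{\mu(\lambda)nT}$. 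This produces the claimed estimate $C\|\phi\|\exp(-\delta_c nT)$.

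The main technical obstacle I expect is ensuring the denominator $(\nu)(P_{nT}^c 1)$ stays uniformly bounded away from zero at the correct exponential scale, so that the ratio expansion is valid for every $n$ and every initial $\nu \in \mathcal{P}(\mathbb{T}^d)$. This is handled by strict positivity of $h$ on the compact torus $\mathbb{T}^d$, which gives $(\nu)(h) \ge \min_{\mathbb{T}^d} h > 0$ uniformly in $\nu$, together with the bound $|(\nu)(R_n 1)| \le C\, e^{(\mu(\lambda)-\delta_c)nT}$ that is exponentially smaller than the leading term. A secondary point of care is the spectral mapping step in the non-autonomous periodic setting: one interprets $\sigma(\mathcal{A})$ via the Bloch--Floquet spectrum associated with \eqref{periodic-parabolic-op} on the cylinder $\mathbb{T}^d\times[0,T]$, and the referenced theorems from the Appendix jointly supply this identification.
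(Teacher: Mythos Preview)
Your overall strategy---reduce to the monodromy operator $P_T^c$, use a rank-one spectral projection plus a remainder bounded by $e^{(\mu(\lambda)-\delta_c)nT}$, and divide numerator by denominator controlling the latter via $\min_{\mathbb{T}^d} h>0$---is a valid route and in fact delivers precisely the rate $\exp(-\delta_c nT)$ stated in the proposition.

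However, you have misread the content of the two theorems you cite. Theorem~\ref{bam} is \emph{not} a quasi-compactness or spectral-decomposition statement: it establishes a uniform Doeblin minorization and boundedness condition $\epsilon\,\eta(\phi)\le \mathcal{K}^{\Delta t}\phi(\mathbf{x})\le \gamma\,\eta(\phi)$ for the period map (and, in the limit, for $\mathcal{U}(T,0)$). Theorem~\ref{thm:existenceinvariantmeasure} is \emph{not} a Krein--Rutman statement: it is a geometric ergodicity result that deduces from the Doeblin bound the existence of the invariant measure $\nu_c$ and the estimate $\|\Phi_n^{\mathcal{K}^{\Delta t}}(\nu_0)-\nu_{\Delta t}\|_{TV}\le 2(1-\epsilon/\gamma)^n$. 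The paper's proof of the proposition is simply the sentence that it is ``a direct conclusion'' of these two theorems; no spectral decomposition is written down.

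So the paper's route is: minorization/boundedness $\Rightarrow$ total-variation contraction at rate $(1-\epsilon/\gamma)^n$. Your route is: Krein--Rutman $\Rightarrow$ rank-one spectral projector $\Rightarrow$ contraction at rate $e^{-\delta_c nT}$. Both yield exponential convergence of $\Phi_{nT}^c(\nu)(\phi)$ to $\int\phi\,d\nu_c$; yours recovers the sharp spectral-gap constant $\delta_c$ exactly as stated, whereas the paper's Doeblin argument gives a generally cruder geometric rate (the statement with $\delta_c$ is then somewhat stronger than what the cited theorems literally provide). If you keep your argument, you should either supply the Krein--Rutman/spectral-mapping input independently or else rewrite the proof to use Theorems~\ref{bam} and~\ref{thm:existenceinvariantmeasure} as they actually read, accepting the rate $(1-\epsilon/\gamma)^n$ in place of $e^{-\delta_c T n}$.
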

The exponential-decay property stated above ensures us to obtain an invariant measure $\nu_c$ for $\Phi^{c}_{T}$ from any initial measure $\nu$. From the definition of $\nu_c$, we know that $\Phi_T^c (\nu_c) = \nu_c$, which means that for any $\phi \in S$
\begin{equation}
	\int_{\mathbb{T}^d} \phi d\nu_c = (\int_{\mathbb{T}^d} P_T^c 1 d\nu_c)^{-1} \int_{\mathbb{T}^d} P_T^c \phi d\nu_c.
\end{equation} 
Therefore, we can find that the principal eigenvalue of $P_T^c$ is just $\int_{\mathbb{T}^d} P_T^c 1 d\nu_c$, which provides a feasible way to compute the  principal eigenvalue. 

\subsection{Numerical discretization and resampling techniques }\label{sec:NumApprResampling}
\noindent
Let $M$ be the number of time discretization interval for each period and $\Delta t=T/M$. 
We use the Euler-Maruyama scheme to discretize the SDE \eqref{SDE-system} and obtain
\begin{equation} \label{NumericalMethod4SDE}
{\bf X}_{i+1} =   {\bf X}_{i} + 
\textbf{b}(t_i,{\bf X}_{i})\Delta t+  \sqrt{2\kappa \Delta t} \bm\omega_i,  
\end{equation}
where $t_i = i\Delta t$ and $\bm\omega_i$'s are i.i.d. $d$-dimensional standard Gaussian random variables. The numerical scheme \eqref{NumericalMethod4SDE} defines an evolution operator $P^{\Delta t}_i$ (also known as transition operator) as follows  
\begin{equation} \label{TransitionOperator}
 P^{\Delta t}_i\phi(\bm x)=\mathbb{E}\big(\phi({\bf X}_{i+1})|{\bf X}_{i}=\bm x\big), \quad\phi \in S.
\end{equation} 
The evolution operator $P^{\Delta t}_i$ describes how the values of a given function evolve in $L_2$ sense over one time step $\Delta t$. One can easily verify that  
\begin{equation} \label{MarkovProcessApproximation}
\big|\big|P^{\Delta t}_{i} - e^{\Delta t \mathcal{L}(t_i)}\big|\big|_{L^2} \le C (\Delta t)^2,
\end{equation}
where $C$ is a positive constant \cite{MilsteinTransition}. Specially, when $\textbf{b} = 0$,  $P^{\Delta t}_{t_i} = e^{\Delta t \mathcal{L}(t_i)}$ for all $i$. Therefore,  solving 
the SDE system \eqref{SDE-system} by the numerical scheme \eqref{NumericalMethod4SDE} provides a good approximation to the evolution operator $e^{\Delta t \mathcal{L}(t_i)}$, which plays an important role in the error estiamte of our Lagrangian methods in Section \ref{sec:ConvergenceAnalysis}. 

In addition, we can define the approximation operator for $P^c_t$ in \eqref{operatorPct}. For instance, if we choose the left-point rectangular rule, we obtain  that for any $\nu \in \mathcal{P}(\mathbb{T}^d)$ and $\phi \in S$
\begin{equation}
(\nu)(P_i^{\Delta t} e^{\Delta t \mathcal{C}(t_i)}\phi) = \mathbb{E}\Big(\phi({\bf X}_{i+1})\exp\big(
c(t_{i},{\bf X}_{i+1})\Delta t\big)\big|{\bf X}_{i} \sim \nu \Big), \quad i=0,1,...,M-1.
\label{DiscreteOperatorPct}
\end{equation}


The time discretization for Feynman-Kac semigroup \eqref{FeynmanKacsemigroup} reads:
\begin{equation}
\Phi_{i}^{\mathcal{C},\Delta t} (\nu)(\phi) = \frac{ (\nu)(P_i^{\Delta t} e^{\Delta t \mathcal{C}(t_i)}\phi)}{(\nu)(P_i^{\Delta t} e^{\Delta t \mathcal{C}(t_i)}1)},
\quad i=0,1,...,M-1.
\label{DiscreteFeynmanKacsemigroup} 
\end{equation}
It is difficult to obtain a closed-form solution to the evolution of probability measure in \eqref{DiscreteFeynmanKacsemigroup}. Therefore, we 
approximate the evolution of probability measure in \eqref{DiscreteFeynmanKacsemigroup} by an $N$-interacting particle system ($N$-IPS) \cite{del2000branching}. 
Let us introduce the notation $\mathcal{K}^{\Delta t} = \mathcal{K}^{\Delta t,M-1} \mathcal{K}^{\Delta t,M-2}\cdots \mathcal{K}^{\Delta t,0}$, where $\mathcal{K}^{\Delta t,i} = P_i^{\Delta t} e^{\Delta t \mathcal{C}(t_i)}$, $\Delta t=T/M$, and $T$ is time period. We denote  
\begin{align}
\Phi^{\mathcal{K}^{\Delta t,i}} (\nu)(\phi) = \frac{ (\nu)(\mathcal{K}^{\Delta t,i}\phi)}{(\nu)(\mathcal{K}^{\Delta t,i}1)},
\quad i=0,1,...,M-1, 
\label{operatorPhiKiDeltat}
\end{align}
the Feynman-Kac semigroup associated with the operator $\mathcal{K}^{\Delta t,i}$. Then, according to Lemma \ref{multible}, it satisfies
\textcolor{black}{
	\begin{equation}
		\Phi^{\mathcal{K}^{\Delta t}} = \prod_{i=0}^{M-1} \Phi^{\mathcal{K}^{\Delta t,M-1-i}}=\Phi^{\mathcal{K}^{\Delta t,0}}\Phi^{\mathcal{K}^{\Delta t,1}}\cdots\Phi^{\mathcal{K}^{\Delta t,M-1}}.
		\label{operatorPhiKDeltat}
\end{equation}}

Suppose the Markov process $({\Theta}, (\mathcal{F}_n)_{n\ge 0}, (\bm\xi^n)_{n\ge 0}, \mathbb{P})$ 
is defined in the product space $(\mathbb{T}^d)^N$. 
For any initial probability measure $\pi_0 = \nu$, we approximate it by an $N$-particle system as
\begin{equation}
	P(\bm\xi^0 \in d\bm z) = \prod_{p=1}^N \pi_0(dz^p).
	\label{initialNparticle}
\end{equation}
Then, we evolve the $N$-particle system according to 
\textcolor{black}{
	\begin{equation}\label{evolveNparticle}
		P(\bm\xi^n \in d\bm z|\bm\xi^{n-1} = \bm x) = \prod_{p=1}^N \Phi^{\mathcal{K}^{\Delta t}}(\frac{1}{N} \sum_{i=1}^{N}\delta_{x^i})(dz^p)=\prod_{p=1}^N (\prod_{i=0}^{M-1} \Phi^{\mathcal{K}^{\Delta t,M-1-i}})(\frac{1}{N} \sum_{i=1}^{N}\delta_{x^i})(dz^p),
	\end{equation}}
where $\bm x=(x^1,...,x^N)^T$ and $n$ denotes the iteration number in the evolution of probability measure by the Feynman-Kac semigroup \eqref{DiscreteFeynmanKacsemigroup}. 

Using Eq.\eqref{evolveNparticle}, we can compute the evolution of the $N$-particle system from $\bm\xi^{n-1}$ to $ \bm\xi^{n}$. It will be divided into $M$ small steps. Let us denote $\bm\xi^{n}_0 = \bm\xi^{n}$ for all $n$.  Within each iteration stage, we evolve the particles from $t=0$ to $t=T$ by the evolution operator $P^{\Delta t}_{i}$ and resample these particles according to weights determined by the potential function. Specifically, at $t_i=i\Delta t$, $i=0,...,M-1$, we evolve the particles in  $\bm\xi_{i}^{n-1}=(\xi_{i}^{1,n-1},...,\xi_{i}^{N,n-1})$ by the numerical scheme \eqref{NumericalMethod4SDE} and get $\textcolor{black}{\widetilde{\bm\xi}_{i}^{n-1}}=(\widetilde{\xi}_{i}^{1,n-1},...,\widetilde{\xi}_{i}^{N,n-1})$. Namely, each particle is updated by 
\textcolor{black}{\begin{equation}
	\widetilde{\xi}_{i}^{p,n-1} = \xi_{i}^{p,n-1} + \textbf{b}(t_{M-1-i},\xi_{i}^{p,n-1})\Delta t+  \sqrt{2\kappa \Delta t} \bm\omega_i^{p,n-1}, \quad p = 1,2,...,N, \label{NMSDEevolveparticles}
\end{equation} }
where $ \bm\omega_i^{p,n-1}$'s are i.i.d. $d$-dimensional standard Gaussian random variables.

Then, we resample the particles in $\widetilde{\bm\xi}_{i}^{n-1}$ according to the multinomial distribution with the weights 
\begin{equation}
	{w}_{i}^{p,n-1}=\frac{\exp{\big(c(t_{M-1-i}, \widetilde\xi_{i}^{p,n-1})\Delta t\big)}}{\sum_{p=1}^{N}\exp{\big(c(t_{M-1-i}, \widetilde\xi_{i}^{p,n-1})\Delta t\big)}}, \quad  p=1,2,...,N, 
	\label{MultinomialResampleWeight}
\end{equation} 	 
and obtain $\bm\xi_{i+1}^{n-1}$.
The evolution of $N$-IPS from $(n-1)T$ to $nT$ can be represented as follows
\begin{align}
	&\bm \xi_0^{n-1} = (\xi_{0}^{1,n-1},\cdots,\xi_{0}^{N,n-1})\longrightarrow 
	\bm\xi_1^{n-1} = (\xi_{1}^{1,n-1},\cdots,\xi_{1}^{N,n-1})\longrightarrow \nonumber\\
	&\cdots\longrightarrow 
	\bm\xi_{M}^{n-1} = (\xi_{M}^{1,n-1},\cdots,\xi_{M}^{N,n-1})=
	\bm\xi_{0}^{n} = (\xi_{0}^{1,n},\cdots,\xi_{0}^{N,n}).
\end{align}  
After obtaining the empirical distribution of the particles $\bm\xi_{0}^{n}$, we can compute the principal eigenvalue. At the iteration stage $n$, we first define the change of the mass as follows
\begin{equation}
	e_{i,n}^N = N^{-1}\sum_{p=1}^{N} \exp(c(t_{M-1-i},\widetilde\xi_{i}^{p,n})\Delta t).
\end{equation}
Then, we compute the approximation of the principal eigenvalue by 
\begin{equation}
	\mu^n_{\Delta t}(\lambda) = (M\Delta t)^{-1}\sum_{i=0}^{M-1} \log\Big( N^{-1}\sum_{p=1}^{N} \exp(c(t_{M-1-i},\widetilde\xi_{i}^{p,n})\Delta t)\Big). 
	\label{NIPS-PrincipalEig}
\end{equation} 
We know that the empirical distribution of the particles $\bm\xi_{0}^{n}$ will weakly converge to the distribution $\Phi_n^{\mathcal{K}^{\Delta t}} (\pi_0)$ as $N \to \infty$. Therefore, we can use $\mu^n_{\Delta t}(\lambda)$ to approximate
the principal eigenvalue $\mu(\lambda)$.

Finally, we give the complete algorithm in Algorithm \ref{Algorithm-TimeFependent}. The performance of our method will be demonstrated in Section \ref{sec:NumericalResults}.

\begin{algorithm}[h]   
	\caption{ Algorithm for computing the principal eigenvalues of parabolic equations}  
	\label{Algorithm-TimeFependent}  
	\begin{algorithmic}[1]  
		\Require  
		velocity field $\textbf{v}(\textbf{x},t)$, potential $c(\textbf{x},t)$, number of $N$-IPS system (i.e., $N$), initial probability measure $\nu_0$, iteration number $n$,  time period $T$,
		time step $\Delta t = T/M$ and $t_i = i\Delta t, 0 \le i \le M$. 
		\State Generate $N$ i.i.d. $\nu_0$-distributed random variables on $[0,1]^d$: \textcolor{black}{$\bm \xi_0^{0} =  (\xi_{0}^{1,0},\cdots,\xi_{0}^{N,0})$}, the $N$-particle system.
		\For {$k = 1:n$}
		\For {$i = 0:M-1$}
		\State Generate i.i.d. standard Gaussian random variables $(\omega_i^{1,k-1},...,\omega_i^{N,k-1})$ and compute \textcolor{black}{$\widetilde {\bm\xi}_{i}^{k-1}  =(\widetilde{\xi}_{i}^{1,k-1},...,\widetilde{\xi}_{i}^{N,k-1})$} according to $ {\bm\xi}_{i}^{k-1}$ by \eqref{NMSDEevolveparticles}. 
		\State Compute the pointwise value $\bm S = (e^{C^1},\cdots, e^{C^N})$, where 
		\textcolor{black}{$C^p=c(t_{M-1-i},\widetilde{\xi}_{i}^{p,k-1})\Delta t$}. 
		\State Compute weights $\bm w = (w^1,\cdots, w^N) = \bm S/\text{sum}(\bm S)$
		and ${\bf E}_{k,i} = \frac{1}{\Delta t}\text{log}(\text{mean}(\bm S))$.
		\State Resample \textcolor{black}{$\widetilde{\bm\xi}_{i}^{k-1}$} according to multinomial distribution with weight $\bm w$ \eqref{MultinomialResampleWeight}, and get $\bm\xi_{i+1}^{k-1}$.
		\EndFor 
		\State Compute $\mu^k_{\Delta t}(\lambda)  = M^{-1}\sum_{i=0}^{M-1}({\bf E}_{k,i})$ and define  $\bm\xi_{0}^{k} = \bm\xi_{M}^{k-1}$.
		\EndFor 
		\Ensure   The approximate invariant distribution $\Phi_n^{\mathcal{K}^{\Delta t}}(\nu)$-distributed 
		$N$-particle system $\bm\xi^n_0$ and approximate the principal eigenvalue $\mu^n_{\Delta t}(\lambda)$ using \eqref{NIPS-PrincipalEig}.
	\end{algorithmic}  
\end{algorithm}   
\begin{remark}
	When the flow is time-independent, we can view it as a periodic flow with any given period $T$. Then, we can still use Algorithm \ref{Algorithm-TimeFependent} to compute the principal eigenvalue. Hence the numerical schemes and the convergence analysis proposed in time-dependent flow can be applied by assigning $T=\Delta t$ and $M=1$.
\end{remark}

\section{Convergence analysis of the Lagrangian particle method}\label{sec:ConvergenceAnalysis}
\noindent
In this section, we will prove the convergence of the Lagrangian particle method in computing 
KPP front speed. We divide the analysis into two parts. The first part studies the approximation of the evolution of parabolic operators by using an operator splitting method. The second part studies the error estimate of the Lagrangian particle method in computing the principal eigenvalue of parabolic operators.

\subsection{Approximation of the evolution of parabolic operators}\label{sec:ErrorEstimateOperatorSplit}
\noindent
We first rewrite the linearized corrector equation of the KPP equation \eqref{KPPTimePeriodic} into the following non-autonomous parabolic equation
 \begin{equation}
w_t = \kappa\Delta_{\textbf{x}}  w + \textbf{b}(t,\textbf{x})\cdot\nabla_{\textbf{x}} w + c(t,\textbf{x})w, \quad \textbf{x}=(x_1,...,x_d)^{T}\in \mathbb{T}^d= [0,1]^d, \quad t \in[0,T], 
\label{non-autonomousParabolicEQ}
\end{equation}
where the initial condition $w(0,\textbf{x})=w_0$, $\textbf{b}(t,\textbf{x})=2\lambda\textbf{e}+\textbf{v}$, $c(t,\textbf{x})=\kappa \lambda^2+\lambda\textbf{v}\cdot \textbf{e} + \tau^{-1}f^{\prime}(0)$, and $T$ is final computational time. Since the velocity  $\textbf{v}=\textbf{v}(t,\textbf{x})$ is space-time periodic, so do $\textbf{b}(t,\textbf{x})$ and $c(t,\textbf{x})$. We assume the period of $\textbf{b}(t,\textbf{x})$ and $c(t,\textbf{x})$ is one in each dimension and they are smooth functions. For notational simplicity, we define 
\begin{equation} 
\mathcal{A}(t)=\mathcal{L}(t)+\mathcal{C}(t), 
\label{DefinitionOperatorA}
\end{equation}
where $\mathcal{L}(t):=\kappa\Delta_{\textbf{x}}  + \textbf{b}(t,\textbf{x})\cdot\nabla_{\textbf{x}}$ and $\mathcal{C}(t)=c(t,\textbf{x})$. The operator $\mathcal{A}(t)$ has a real isolated principal eigenvalue $\mu(\lambda)$ \cite{hess1991periodic}. We aim to obtain error estimates of our Lagrangian method in approximating the principal eigenvalue $\mu(\lambda)$. To this end, we study the approximation of the solution operator for the parabolic equation \eqref{non-autonomousParabolicEQ} by using an operator splitting method.

We define the solution operator $\mathcal{U}(t,s)$ corresponding to the parabolic equation \eqref{non-autonomousParabolicEQ}, which satisfies the following properties:
\begin{enumerate}
	\item $\mathcal{U}(s,s) = Id$, for any $s \ge 0$;
	\item $\mathcal{U}(t,r)\circ \mathcal{U}(r,s) = \mathcal{U}(t,s)$, for any $t \ge r \ge s \ge 0$;
	\item $\frac{d}{dt} \mathcal{U}(t,s) w_0 = \mathcal{A}(t)\mathcal{U}(t,s) w_0$, for any $t\ge s\ge 0, w_0 \in L^2([0,1]^d)$.
\end{enumerate} 
The solution operator $\mathcal{U}(t,s)$ enables us to study the evolution of parabolic operator in  \eqref{non-autonomousParabolicEQ}, e.g., the principal eigenvalue of $\mathcal{U}(T,0)$ gives the principal eigenvalue of the parabolic operator $\mathcal{A}(t)$. It has been proven that the principal eigenvalue of $\mathcal{U}(T,0)$ exists and is real \cite{hess1991periodic}. It is difficult to obtain a closed-form for the solution operator $\mathcal{U}(T,0)$. Therefore, we approximate the solution operator $\mathcal{U}(T,0)$ by using an operator splitting method. 

We set $t_i = i\Delta t$ with $\Delta t = \frac{T}{M}$ and consider the following parabolic equation with freezing time coefficients
\begin{equation}
w_t = \kappa\Delta_{\textbf{x}}  w + \textbf{b}(t_i,\textbf{x})\cdot\nabla_{\textbf{x}} w + c(t_{M-1-i},\textbf{x})w, \quad t_i< t \le t_{i+1}, \quad i\geq 0.\label{freetime1}
\end{equation}
The corresponding solution operator can be formally represented as
\begin{equation}
w(t) = e^{(t-t_i)(\mathcal{L}+ \mathcal{C})(t_i)}\prod_{k = 0}^{i-1}e^{\Delta t(\mathcal{L}+ \mathcal{C})(t_k)}w_0,\quad t_i \le t < t_{i+1}. \label{SoluOperatorfreetime1}
\end{equation}
Furthermore, we can apply the first-order Lie-Trotter operator splitting method to 
approximate the solution operator defined in \eqref{SoluOperatorfreetime1} and obtain 
\begin{equation}
w(t) = e^{(t-t_i)\mathcal{L}(t_i)}e^{(t-t_i) \mathcal{C}(t_i)}\prod_{k = 0}^{i-1}e^{\Delta t\mathcal{L}(t_j)}e^{\Delta t \mathcal{C}(t_j)}w_0, \quad t_i \le t < t_{i+1}. 
\label{LieTrotterAPPSoluOperator}
\end{equation} 

We will prove the solution operator $\prod_{j = 0}^{M-1}e^{\Delta t\mathcal{L}(t_j)}e^{\Delta t \mathcal{C}(t_j)}$ obtained by the Lie-Trotter operator splitting method converges to the solution operator $\mathcal{U}(T,0)$ in certain operator norm as $\Delta t$ approaches zero.  As a  consequence of this convergence result, we can further prove the convergence of the principal eigenvalue associated with these two solution operators.  



To make our paper self-contained, we collect several fundamental results for abstract linear evolution equations by semigroup theory in \ref{Sec:SemigroupTheory}.  We begin with the following lemma, which is as a special case of Theorem 1 in \cite{stewart1974generation}.
\begin{lem}
For any fixed $t$, if $\textbf{b}(t,{\bf x})$ and $c(t,{\bf x})$ are smooth and bounded, then the operator $\mathcal{A}(t)$ defined in \eqref{DefinitionOperatorA} is a strongly elliptic operator on $\mathbb{T}^d$. Moreover, 
$\mathcal{A}(t)$ generates an analytic semigroup $e^{\cdot \mathcal{A}(t)}$ in $L^p(\mathbb{T}^d)$, for all $1 \le p \le \infty $. 
\end{lem}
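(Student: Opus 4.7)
My plan is to verify the two conclusions in turn, with the second reducing almost immediately to the cited theorem of Stewart. For strong ellipticity, I would observe that the only second-order part of $\mathcal{A}(t)$ is $\kappa\Delta_{\textbf{x}}$, whose principal symbol at frequency $\xi\in\mathbb{R}^d$ is $-\kappa|\xi|^2$. Since $\kappa>0$, this provides a uniform ellipticity constant that is independent of $t$ and $\textbf{x}$, which is the definition of a strongly elliptic operator with the desired constant. The lower-order terms $\textbf{b}(t,\textbf{x})\cdot\nabla_{\textbf{x}}$ and $c(t,\textbf{x})$ have smooth bounded coefficients by hypothesis, so no further ellipticity issue arises.

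For the semigroup statement, I would directly invoke Theorem~1 of \cite{stewart1974generation}, whose hypotheses are (i) strong ellipticity of the second-order part, (ii) boundedness and sufficient regularity of all coefficients, and (iii) suitable boundary conditions or, as here, the underlying manifold being compact without boundary. Items (i) and (ii) are checked in the previous step, and (iii) is automatic on the torus $\mathbb{T}^d$, since periodicity plays the role of boundary data. A noteworthy point is that Stewart's result covers precisely the endpoint exponents $p=1$ and $p=\infty$ in addition to the interior range $1<p<\infty$, which is exactly what the lemma claims. For the intermediate values one could alternatively argue by the Lions form method on $L^2$ followed by Riesz--Thorin interpolation with the endpoint cases, but Stewart's statement subsumes this and gives the cleanest reduction.

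I do not foresee a substantial technical obstacle in this lemma itself; the analytic content has been packaged into \cite{stewart1974generation}. The main conceptual point I would emphasise in the write-up is that $t$ is \emph{frozen} throughout: the lemma only asserts that each individual $\mathcal{A}(t)$ generates an analytic semigroup, not that the family $\{\mathcal{A}(t)\}_{t\ge 0}$ generates a two-parameter evolution system. The latter is the job of the non-autonomous theory invoked later in the appendix and is precisely why establishing the frozen-time case cleanly here is the right first step toward justifying the Lie--Trotter splitting in \eqref{LieTrotterAPPSoluOperator} and the subsequent error estimate of $\prod_{j=0}^{M-1}e^{\Delta t\,\mathcal{L}(t_j)}e^{\Delta t\,\mathcal{C}(t_j)}$ against $\mathcal{U}(T,0)$.
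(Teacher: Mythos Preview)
Your proposal is correct and mirrors the paper's treatment: the paper does not give a proof at all but simply states the lemma as a special case of Theorem~1 in \cite{stewart1974generation}, which is exactly the reduction you carry out. Your explicit check of the principal symbol and the remark about $t$ being frozen are helpful elaborations, but the core argument is the same direct appeal to Stewart.
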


We will prove that, in our non-autonomous parabolic equation setting, the assumptions made in \ref{Sec:SemigroupTheory} are all satisfied, so we can obtain the error of the operator splitting 
method in approximation the non-autonomous parabolic operator. 

We first prove that the operator $\mathcal{A}$ defined in \eqref{DefinitionOperatorA} satisfies 
a H\"{o}lder continuous condition. 

\begin{lem}\label{Ellipconti}
	Suppose $\textbf{b}(t,\textbf{x})$ and $c(t,\textbf{x})$ in the operator $\mathcal{A}(t)$ are bounded,
	smooth and periodic in each component of $\textbf{x}$, and uniformly H\"{o}lder continuous in $t$, i.e.,
	for any $t,s\in \mathbb{R}^+$, 
	\begin{equation}
	\big|\big|\textbf{b}(t,\textbf{x}) - \textbf{b}(s,\textbf{x})\big|\big| \le C_1 |t-s|^\beta, \quad  \big|c(t,\textbf{x}) - c(s,\textbf{x})\big| \le C_1 |t-s|^\beta,
	\label{bcHolderContinuous}
	\end{equation}
	for some positive $C_1$ and $\beta$.  Let $v \in \mathcal{D}(\mathcal{A}(\cdot))= H^2(\mathbb{T}^d)$ be periodic. Then, for any $0 < s\le \tau$, there exists ${\gamma_1} > 0$, such that  
	\begin{equation}
	\big|\big|\mathcal{A}(\tau)v - \mathcal{A}(s)v\big|\big|_{L^2} \le C_2(\tau-s)^{\beta}\big|\big|(\mathcal{A}(t)-{\gamma_1})v\big|\big|_{L^2}^{1/2} \big|\big|v\big|\big|_{L^2}^{1/2},
	\label{AHolderContinuous1}
	\end{equation} 
	for any $t \in \mathbb{R}^+$. Specifically, if $\textbf{b}(t,\textbf{x}) = 0$, then 
	\begin{equation}
	\big|\big|\mathcal{A}(\tau) v - \mathcal{A}(s) v\big|\big|_{L^2} \le C_3(\tau - s)^\beta \big|\big|v\big|\big|_{L^2}.
	\label{AHolderContinuous2}
	\end{equation} 
\end{lem}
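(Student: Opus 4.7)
The plan is to exploit the affine-in-$t$ structure of $\mathcal{A}(t)$ and reduce everything to controlling the first-order term using interpolation combined with elliptic regularity; the shift $\gamma_1$ is introduced to make $\mathcal{A}(t)-\gamma_1$ uniformly invertible on $L^2(\mathbb{T}^d)$.

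Step 1 (difference identity). Since only $\textbf{b}$ and $c$ depend on $t$, I would write
\begin{equation*}
\mathcal{A}(\tau)v-\mathcal{A}(s)v = \bigl(\textbf{b}(\tau,\textbf{x})-\textbf{b}(s,\textbf{x})\bigr)\cdot\nabla_{\textbf{x}} v + \bigl(c(\tau,\textbf{x})-c(s,\textbf{x})\bigr)v,
\end{equation*}
and then plug in the uniform H\"{o}lder bound \eqref{bcHolderContinuous} to get
\begin{equation*}
\big\|\mathcal{A}(\tau)v-\mathcal{A}(s)v\big\|_{L^2} \le C_1(\tau-s)^{\beta}\bigl(\|\nabla_{\textbf{x}} v\|_{L^2}+\|v\|_{L^2}\bigr).
\end{equation*}
The special case $\textbf{b}\equiv 0$ then follows immediately, since only the zeroth-order term survives and \eqref{AHolderContinuous2} drops out with $C_3=C_1$.

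Step 2 (choice of $\gamma_1$ and bound on $\|v\|_{L^2}$). By standard strong ellipticity of $\mathcal{A}(t)$ combined with boundedness of $\textbf{b},c$, there exists $\gamma_1>0$, independent of $t$, such that $\mathcal{A}(t)-\gamma_1$ is dissipative on $L^2(\mathbb{T}^d)$ and admits a bounded inverse with operator norm controlled uniformly in $t$. This is the standard shift argument used in generating analytic semigroups (as implicitly invoked by the preceding lemma and in \ref{Sec:SemigroupTheory}). It yields
\begin{equation*}
\|v\|_{L^2}\le C_{\gamma_1}\big\|(\mathcal{A}(t)-\gamma_1)v\big\|_{L^2},
\end{equation*}
hence $\|v\|_{L^2}=\|v\|_{L^2}^{1/2}\|v\|_{L^2}^{1/2}\le C\|(\mathcal{A}(t)-\gamma_1)v\|_{L^2}^{1/2}\|v\|_{L^2}^{1/2}$.

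Step 3 (bound on $\|\nabla_{\textbf{x}} v\|_{L^2}$). Integration by parts on $\mathbb{T}^d$ gives the interpolation $\|\nabla_{\textbf{x}} v\|_{L^2}^{2}=-\int v\,\Delta_{\textbf{x}} v\le \|v\|_{L^2}\|\Delta_{\textbf{x}} v\|_{L^2}$, so it suffices to estimate $\|\Delta_{\textbf{x}} v\|_{L^2}$. Rewriting $\kappa\Delta_{\textbf{x}} v = (\mathcal{A}(t)-\gamma_1)v-\textbf{b}(t,\textbf{x})\cdot\nabla_{\textbf{x}} v - (c(t,\textbf{x})-\gamma_1)v$ and using Young's inequality $\|\textbf{b}\|_\infty\|\nabla_{\textbf{x}} v\|_{L^2}\le \tfrac{\kappa}{2}\|\Delta_{\textbf{x}} v\|_{L^2}+C\|v\|_{L^2}$ absorbs the first-order term into the left-hand side and produces
\begin{equation*}
\|\Delta_{\textbf{x}} v\|_{L^2}\le C\bigl(\|(\mathcal{A}(t)-\gamma_1)v\|_{L^2}+\|v\|_{L^2}\bigr)\le C'\|(\mathcal{A}(t)-\gamma_1)v\|_{L^2},
\end{equation*}
where the second inequality uses Step 2. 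Substituting back in the interpolation gives $\|\nabla_{\textbf{x}} v\|_{L^2}\le C\|(\mathcal{A}(t)-\gamma_1)v\|_{L^2}^{1/2}\|v\|_{L^2}^{1/2}$, and combining with Steps 1--2 yields \eqref{AHolderContinuous1}.

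The main technical obstacle is Step 2, namely verifying that a single $\gamma_1$ works \emph{uniformly in $t$}: the coefficients $\textbf{b}(t,\cdot)$ and $c(t,\cdot)$ vary with $t$, so one must show that the lower bound of the spectrum of $-\mathcal{A}(t)+\gamma_1$ on $L^2(\mathbb{T}^d)$ can be made positive by a choice of $\gamma_1$ that depends only on $\|\textbf{b}\|_\infty,\|c\|_\infty$ and $\kappa$, not on $t$. This follows by a standard G\aa rding-type estimate after completing the square on the advection term, which is where the uniform boundedness of $\textbf{b}$ and $c$ in the hypotheses is essential. Every other step is a routine interpolation or integration by parts on the torus, where periodicity kills all boundary contributions.
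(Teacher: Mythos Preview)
Your proposal is correct and follows essentially the same strategy as the paper: both reduce \eqref{AHolderContinuous1} to the interpolation bound $\|\nabla_{\textbf{x}} v\|_{L^2}\le C\|\Delta_{\textbf{x}} v\|_{L^2}^{1/2}\|v\|_{L^2}^{1/2}$ together with the elliptic regularity estimate $\|\Delta_{\textbf{x}} v\|_{L^2}+\|v\|_{L^2}\le C\|(\mathcal{A}(t)-\gamma_1)v\|_{L^2}$ for $\gamma_1$ large depending only on $\kappa,\|\textbf{b}\|_\infty,\|c\|_\infty,\|\nabla c\|_\infty$. The only difference is cosmetic: the paper obtains the latter bound by expanding $\|(\kappa\Delta_{\textbf{x}}+c_{\gamma_1})v\|_{L^2}^2$ and choosing $\gamma_1$ explicitly to dominate the cross terms, whereas you reach it by the triangle inequality on $\kappa\Delta_{\textbf{x}} v=(\mathcal{A}(t)-\gamma_1)v-\textbf{b}\cdot\nabla_{\textbf{x}} v-(c-\gamma_1)v$ and absorb the first-order term via Young's inequality---both are standard and equivalent.
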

\begin{proof}
	We first consider the case when  $\textbf{b}(t,\textbf{x})\ne 0$. By using the uniformly H\"{o}lder continuous conditions for $\textbf{b}(t,\textbf{x})$ and $c(t,\textbf{x})$, we have 
	\begin{align}
	\big|\big|\mathcal{A}(\tau)v - \mathcal{A}(s)v\big|\big|_{L^2}  
	=& \big|\big|(\textbf{b}(\tau,\textbf{x})-\textbf{b}(s,\textbf{x}))\cdot\nabla_{\textbf{x}} v + (c(\tau,\textbf{x}) - c(s,\textbf{x}))v\big|\big|_{L^2} \nonumber\\
	\le &C_1(t-s)^\beta ( ||\nabla_{\textbf{x}} v||_{L^2} + ||v||_{L^2}).  
	\label{AvAvUpperBound} 
	\end{align}
	For the operator $\mathcal{A}(t)$, we claim that there exists ${\gamma_1} > 0$ such that,
	\begin{equation}
	\big|\big|(\mathcal{A}(t) - {\gamma_1})v\big|\big|_{L^2} > C(\kappa,\textbf{b},c) \big(||\Delta_{\textbf{x}} v||_{L^2} + ||v||_{L^2}\big), \quad \forall v \in \mathcal{D}(\mathcal{A}(\cdot)),
	\label{AvlowerBound}
	\end{equation} 
	where the constant $C(\kappa,\textbf{b},c)$ depends on $\kappa$, $\textbf{b}(t,\textbf{x})$ and $c(t,\textbf{x})$.
	
	We prove the statement in \eqref{AvlowerBound} before move to the main results. Let $c_{\gamma_1} = c-{\gamma_1}$ and assume $\big|\big|\textbf{b}(t,\textbf{x})\big|\big| \le M_1$, $|c(t,\textbf{x})| \le M_2$, and $\big|\big|\nabla_{\textbf{x}} c(t,\textbf{x})\big|\big| \le M_3$. We know that 
	\begin{align}
	\big|\big|(\mathcal{A}(t)-{\gamma_1})v\big|\big|_{L^2} &= \big|\big|(\kappa \Delta_{\textbf{x}} + \textbf{b}(t,\textbf{x})\cdot \nabla_{\textbf{x}} + c_{\gamma_1}(t,\textbf{x}))v\big|\big|_{L^2}\nonumber\\
	&\ge  \big|\big|(\kappa \Delta_{\textbf{x}}  + c_{\gamma_1}(t,\textbf{x}))v\big|\big|_{L^2} - \big|\big| \textbf{b}(t,\textbf{x})\cdot \nabla_{\textbf{x}} v\big|\big|_{L^2}.
	\label{FirstTermEstimate0}
	\end{align}	
	For the term $\big|\big|(\kappa \Delta_{\textbf{x}}  + c_{\gamma_1}(t,\textbf{x}))v\big|\big|_{L^2}$, the periodic condition of $v$ implies that 
	\begin{align}
	&\big|\big|(\kappa \Delta_{\textbf{x}}  + c_{\gamma_1}(t,\textbf{x}))v\big|\big|_{L^2}^2 \nonumber\\
	=& ||\kappa\Delta_{\textbf{x}} v||_{L^2}^2 + \big|\big|c_{\gamma_1}(t,\textbf{x}) v\big|\big|_{L^2}^2 - 2 \langle \kappa \nabla_{\textbf{x}} v, c_{\gamma_1}(t,\textbf{x}) \nabla_{\textbf{x}} v \rangle_{L^2} - 2 \langle \kappa\nabla_{\textbf{x}} v, v\nabla_{\textbf{x}} c(t,\textbf{x}) \rangle_{L^2}.
	\label{FirstTermEstimate1}
	\end{align} 
	Notice that if we choose ${\gamma_1} = \frac{2M_1^2}{\kappa}+M_2$, then we obtain 
	\begin{align}
	-2 \langle \kappa \nabla_{\textbf{x}} v, c_{\gamma_1}(t,\textbf{x}) \nabla_{\textbf{x}} v \rangle_{L^2} \ge 4\kappa({\gamma_1} - M_2) ||\nabla_{\textbf{x}} v||_{L^2}\ge 4\big|\big|\textbf{b}(t,\textbf{x})\cdot \nabla_{\textbf{x}} v\big|\big|_{L^2}.
	\label{FirstTermEstimate2}
	\end{align} 
	In addition, we have 
	\begin{align}
	2 \langle \kappa\nabla_{\textbf{x}} v, v\nabla_{\textbf{x}} c(t,\textbf{x}) \rangle_{L^2} \le 2\kappa M_3  ||\nabla_{\textbf{x}} v||_{L^2} ||v||_{L^2}\le 2 \kappa  M_3 C||\Delta_{\textbf{x}} v||_{L^2}^{\frac{1}{2}} ||v||_{L^2}^{\frac{3}{2}}.
	\label{FirstTermEstimate3}
	\end{align} 
	Here, we use the fact that $||\nabla_{\textbf{x}} v||_{L^2} \le C||\Delta_{\textbf{x}} v||_{L^2}^{\frac{1}{2}} ||v||_{L^2}^{\frac{1}{2}}$, which is the moment inequality in interpolation theory; see Theorem 5.34 of \cite{engel1999one}.
	If we take ${\gamma_1}$ large enough such that $4(\frac{{\gamma_1}-M_2}{3})^\frac{3}{4} \kappa^\frac{1}{4} \ge 2\kappa M_3 C$, we get that
	\begin{equation}
	||\kappa\Delta_{\textbf{x}} v||_{L^2}^2 + \big|\big|c_{\gamma_1}(t,\textbf{x}) v\big|\big|_{L^2}^2 
	\ge 2 \kappa  M_3 C||\Delta_{\textbf{x}} v||_{L^2}^{\frac{1}{2}} ||v||_{L^2}^{\frac{3}{2}}
	\ge  2 \langle \kappa\nabla_{\textbf{x}} v, v\nabla_{\textbf{x}} c(t,\textbf{x}) \rangle_{L^2}.
	\label{FirstTermEstimate4}
	\end{equation}
	Substititing the estiamtes \eqref{FirstTermEstimate2}-\eqref{FirstTermEstimate4} into \eqref{FirstTermEstimate1}, we obtain   
	\begin{equation}
	\big|\big|(\kappa \Delta_{\textbf{x}}  + c_{\gamma_1}(t,\textbf{x}))v\big|\big|_{L^2} \ge 2\big|\big|\textbf{b}(t,\textbf{x})\cdot\nabla_{\textbf{x}} v\big|\big|_{L^2}.
	\label{FirstTermEstimate5}	
	\end{equation} 
	Thus, from \eqref{FirstTermEstimate0} we get that 
	\begin{equation}
	\big|\big|(\mathcal{A}(t)-{\gamma_1})v\big|\big|_{L^2} \ge \frac{1}{2}\big|\big|(\kappa \Delta_{\textbf{x}}  + c_{\gamma_1}(t,\textbf{x}))v\big|\big|_{L^2}.
	\label{FirstTermEstimate6}
	\end{equation}
	Using the same argument, we can prove that for ${\gamma_1}$ large enough, 
	\begin{equation}
	\big|\big|(\kappa \Delta_{\textbf{x}}  + c_{\gamma_1}(t,\textbf{x}))v\big|\big|_{L^2} \ge \hat C (||\Delta_{\textbf{x}}  v||_{L^2} + ||v||_{L^2}).
	\end{equation} 
	Finally, using the moment inequality we prove the statement in \eqref{AHolderContinuous1}.
	
	The case when  $\textbf{b}(t,\textbf{x})= 0$ is simple since we have 
	\begin{equation}
	\big|\big|\mathcal{A}(\tau)v - \mathcal{A}(s)v\big|\big|_{L^2}  = \big|\big| (c(\tau,\textbf{x}) - c(s,\textbf{x}))v\big|\big|_{L^2} \le C_3(t-s)^\beta||v||_{L^2}. 
	\label{AvAvUpperBound2} 	
	\end{equation} 
\end{proof}

We then verify the operators $\mathcal{L}(t)$ and $\mathcal{C}(t)$ defined in \eqref{DefinitionOperatorA} 
satisfy the assumption \ref{commutator}. Given $\tau\ge0$, we assume the bounded conditions as follows
\begin{equation}
||e^{\tau \mathcal{L}(t)}||_{L^2} \le 1,\quad ||e^{\tau \mathcal{C}(t)}||_{L^2} \le 1,\quad ||e^{\tau (\mathcal{L}(t)+\mathcal{C}(t))}||_{L^2} \le 1. \label{LCBounded} 	
\end{equation}
 
\begin{lem}\label{Commu}
	Suppose $\textbf{b}(t,\textbf{x})$ and $c(t,\textbf{x})$ in the operator $\mathcal{A}$ satisfy the same assumption 	as that in Lemma \ref{Ellipconti}. Then, there exists ${\gamma_2}>0$ such that, for any  periodic $v  \in L^2(\mathbb{T}^d)$, commutator of $\mathcal{L}$ and $\mathcal{C}$ acting on $v$ follows,
	\begin{equation}
	\big|\big|[\mathcal{L}(t),\mathcal{C}(t)] v\big|\big|_{L^2} \le C_1 \big|\big| (\mathcal{L}(t)-{\gamma_2}) v\big|\big|^{\frac{1}{2}}_{L^2} ||v||_{L^2}^{\frac{1}{2}}, \quad \forall t\ge 0.
	\label{LCCL} 
	\end{equation}	
\end{lem}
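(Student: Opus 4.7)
The plan is to compute the commutator $[\mathcal{L}(t),\mathcal{C}(t)]v$ explicitly and then control each resulting term by a combination of $\|v\|_{L^2}$ and $\|\nabla_{\mathbf{x}} v\|_{L^2}$, using the smoothness of $c(t,\mathbf{x})$ and $\mathbf{b}(t,\mathbf{x})$. Since $\mathcal{C}(t)$ is multiplication by $c(t,\mathbf{x})$ and $\mathcal{L}(t)=\kappa\Delta_{\mathbf{x}}+\mathbf{b}(t,\mathbf{x})\cdot\nabla_{\mathbf{x}}$, a direct calculation using the product rule cancels the second-order terms, leaving
\begin{equation*}
[\mathcal{L}(t),\mathcal{C}(t)]v \;=\; \kappa(\Delta_{\mathbf{x}} c)\,v \;+\; 2\kappa\,\nabla_{\mathbf{x}} c\cdot\nabla_{\mathbf{x}} v \;+\; (\mathbf{b}\cdot\nabla_{\mathbf{x}} c)\,v.
\end{equation*}
Since $c$ and $\mathbf{b}$ are smooth, bounded, and periodic (so all required derivatives of $c$ are bounded by some constant $M$), taking $L^2$ norms yields $\big\|[\mathcal{L}(t),\mathcal{C}(t)]v\big\|_{L^2}\le C\bigl(\|v\|_{L^2}+\|\nabla_{\mathbf{x}} v\|_{L^2}\bigr)$.

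Next, I would establish a coercivity estimate for $\mathcal{L}(t)-\gamma_2$ mirroring the one already proved for $\mathcal{A}(t)-\gamma_1$ in Lemma \ref{Ellipconti}. Specifically, expanding
\begin{equation*}
\big\|(\kappa\Delta_{\mathbf{x}}-\gamma_2)v\big\|_{L^2}^2 \;=\; \|\kappa\Delta_{\mathbf{x}} v\|_{L^2}^2 + \gamma_2^{2}\|v\|_{L^2}^2 + 2\gamma_2\kappa\|\nabla_{\mathbf{x}} v\|_{L^2}^2
\end{equation*}
(using periodicity for integration by parts), one checks that for $\gamma_2\ge 2M_1^2/\kappa$ the cross term dominates $4\|\mathbf{b}\cdot\nabla_{\mathbf{x}} v\|_{L^2}^2$, so that $\|(\mathcal{L}(t)-\gamma_2)v\|_{L^2}\ge \tfrac12\|(\kappa\Delta_{\mathbf{x}}-\gamma_2)v\|_{L^2}$; and enlarging $\gamma_2$ further gives $\|(\mathcal{L}(t)-\gamma_2)v\|_{L^2}\ge \hat C\bigl(\|\Delta_{\mathbf{x}} v\|_{L^2}+\|v\|_{L^2}\bigr)$. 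This is the same calculation used for $\mathcal{A}$, simply with the multiplicative potential absent.

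I would then invoke the interpolation (moment) inequality $\|\nabla_{\mathbf{x}} v\|_{L^2}\le C\|\Delta_{\mathbf{x}} v\|_{L^2}^{1/2}\|v\|_{L^2}^{1/2}$ (Theorem 5.34 of \cite{engel1999one}) already employed in the previous lemma. Combining with the coercivity estimate yields
\begin{equation*}
\|\nabla_{\mathbf{x}} v\|_{L^2}\;\le\;C\,\big\|(\mathcal{L}(t)-\gamma_2)v\big\|_{L^2}^{1/2}\,\|v\|_{L^2}^{1/2},
\end{equation*}
while the lower bound $\|(\mathcal{L}(t)-\gamma_2)v\|_{L^2}\ge \hat C\|v\|_{L^2}$ lets us rewrite $\|v\|_{L^2}=\|v\|_{L^2}^{1/2}\|v\|_{L^2}^{1/2}\le C'\|(\mathcal{L}(t)-\gamma_2)v\|_{L^2}^{1/2}\|v\|_{L^2}^{1/2}$. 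Substituting both into the commutator bound from the first paragraph delivers \eqref{LCCL}.

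The proof is essentially a routine chain of elliptic-coercivity plus interpolation estimates, so there is no real conceptual obstacle; the only thing requiring care is keeping track of how large $\gamma_2$ must be so that $(\mathcal{L}(t)-\gamma_2)$ dominates both the drift term $\mathbf{b}\cdot\nabla_{\mathbf{x}} v$ and absorbs the $\|v\|_{L^2}$ contribution uniformly in $t$. Because $\mathbf{b}$ and $c$ are time-periodic and smooth, the bounds $M_1,M_2,M_3$ can be chosen independent of $t$, and hence $\gamma_2$ and $C_1$ in \eqref{LCCL} are uniform in $t\ge 0$.
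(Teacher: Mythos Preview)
Your proposal is correct and follows essentially the same approach as the paper: compute the commutator explicitly to get $\kappa(\Delta_{\mathbf{x}} c)v + 2\kappa\nabla_{\mathbf{x}} c\cdot\nabla_{\mathbf{x}} v + (\mathbf{b}\cdot\nabla_{\mathbf{x}} c)v$, bound it by $C(\|v\|_{L^2}+\|\nabla_{\mathbf{x}} v\|_{L^2})$, then reuse the coercivity argument from Lemma~\ref{Ellipconti} (now for $\mathcal{L}(t)-\gamma_2$ in place of $\mathcal{A}(t)-\gamma_1$) together with the moment inequality $\|\nabla_{\mathbf{x}} v\|_{L^2}\le C\|\Delta_{\mathbf{x}} v\|_{L^2}^{1/2}\|v\|_{L^2}^{1/2}$. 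If anything, you are slightly more explicit than the paper in absorbing the $\|v\|_{L^2}$ contribution into the right-hand side of \eqref{LCCL}.
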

\begin{proof}
	We first observe that, for any $v$ periodic in $L^2(\mathbb{T}^d)$,
	\begin{align}
	&\big|\big|[\mathcal{L}(t),\mathcal{C}(t)] v\big|\big|_{L^2} = \big|\big|\mathcal{L}(t)(\mathcal{C}(t)v)-\mathcal{C}(t)(\mathcal{L}(t)v)\big|\big|_{L^2} \nonumber\\
	= &\big|\big|\big(\kappa \Delta_{\textbf{x}} c(t,\textbf{x}) + \textbf{b}(t,\textbf{x})\cdot \nabla_{\textbf{x}} c(t,\textbf{x})\big) v + 2 \kappa \nabla_{\textbf{x}} c(t,\textbf{x})\cdot\nabla v\big|\big|_{L^2}\nonumber\\
	\le & (\kappa M_4 + M_1M_3)||v||_{L^2} + 2\kappa M_3 ||\nabla_{\textbf{x}} v||_{L^2},
	\end{align}
	where $||b(t,\textbf{x})||\le M_1$, $||\nabla_{\textbf{x}} c(t,\textbf{x})|| \le M_3$, and $|\Delta_{\textbf{x}} c(t,\textbf{x})| \le M_4$.  Following the same procedure as in the proof of Lemma \ref{Ellipconti}, we have
	\begin{align}
	\big|\big|(\mathcal{L}(t) - {\gamma_1})v\big|\big|_{L^2} \ge C(\kappa,\textbf{b}) (||\Delta_{\textbf{x}} v||_{L^2} + ||v||_{L^2}). 
	\label{Lvlowerbound}
	\end{align}
	Using the fact that $||\nabla_{\textbf{x}} v||_{L^2} \le C||\Delta_{\textbf{x}} v||_{L^2}^{\frac{1}{2}} ||v||_{L^2}^{\frac{1}{2}}$, we finally prove the assertion in \eqref{LCCL}.
\end{proof}
\begin{remark}
	If the bounded conditions \eqref{LCBounded} for $\mathcal{L}(t)$ and $\mathcal{C}(t)$ 
	do not hold, we can shift the operators by a constant so that the shifted operators satisfy the bounded condition. Shift the operator by a constant will not affect the commutator in \eqref{LCCL}.
\end{remark}
Now we are in the position to present the main result in approximating the solution operator  $\mathcal{U}(t,s)$ for the parabolic equation \eqref{non-autonomousParabolicEQ}.

\begin{thm} \label{Errorestimate1}
The solution operator \eqref{SoluOperatorfreetime1} has the following error in approximating the solution operator $\mathcal{U}(T,0)$ in $L^2$ operator norm
\begin{align}
\big|\big|\mathcal{U}(T,0) - \prod_{k=0}^{M-1} e^{\Delta t \mathcal{A}(k\Delta t)}\big|\big|_{L^2(\mathbb{T}^d)} \le C_1(T)(\Delta t)^{\beta-\frac{1}{2}},
\label{OperatorApproximation1}
\end{align} 
where $T > 0$, $M$ is an integer, and $\Delta t = \frac{T}{M}$. In addition, the Lie-Trotter operator splitting method has the following error in approximaing the solution operator \eqref{SoluOperatorfreetime1}
\begin{align}
\Big|\Big|\prod_{k=0}^{M-1} e^{\Delta t \mathcal{A}(k\Delta t)} - \prod_{k=0}^{M-1} e^{\Delta t\mathcal{L}(k\Delta t)}e^{\Delta t\mathcal{C}(k\Delta t)}\Big|\Big|_{L^2(\mathbb{T}^d)} \le C_2(T) (\Delta t)^{\frac{1}{2}}.
\label{OperatorApproximation2}
\end{align} 
\end{thm}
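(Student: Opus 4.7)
I would attack the two inequalities separately but with the same guiding principle: a telescoping identity that reduces a global error to a sum of single-step errors, and then use the Hölder condition (Lemma \ref{Ellipconti}) for the first bound and the commutator condition (Lemma \ref{Commu}) for the second. In both cases the analytic-semigroup smoothing estimates $\|\mathcal{A}(t_k)e^{s\mathcal{A}(t_k)}\|_{L^2}\le C/s$ and $\|\mathcal{L}(t_k)e^{s\mathcal{L}(t_k)}\|_{L^2}\le C/s$ (valid because $\mathcal{L}$ and $\mathcal{A}$ are strongly elliptic, hence generate analytic semigroups, see \ref{Sec:SemigroupTheory}) are what let us absorb the half-power of the unbounded operator that appears on the right-hand side of Lemmas \ref{Ellipconti} and \ref{Commu}.

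\textbf{Part 1 (frozen-coefficient error).} Write
\begin{equation*}
\mathcal{U}(T,0)-\prod_{k=0}^{M-1}e^{\Delta t\mathcal{A}(t_k)}
=\sum_{k=0}^{M-1}\mathcal{U}(T,t_{k+1})\bigl[\mathcal{U}(t_{k+1},t_k)-e^{\Delta t\mathcal{A}(t_k)}\bigr]\prod_{j<k}e^{\Delta t\mathcal{A}(t_j)}.
\end{equation*}
For each one-step error I would use the standard Duhamel identity
\begin{equation*}
\mathcal{U}(t_{k+1},t_k)-e^{\Delta t\mathcal{A}(t_k)}=\int_{t_k}^{t_{k+1}}\mathcal{U}(t_{k+1},s)\bigl[\mathcal{A}(s)-\mathcal{A}(t_k)\bigr]e^{(s-t_k)\mathcal{A}(t_k)}\,ds,
\end{equation*}
(obtained by differentiating $s\mapsto \mathcal{U}(t_{k+1},s)e^{(s-t_k)\mathcal{A}(t_k)}$). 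Applying Lemma \ref{Ellipconti} with $v=e^{(s-t_k)\mathcal{A}(t_k)}w$ gives
\begin{equation*}
\|[\mathcal{A}(s)-\mathcal{A}(t_k)]e^{(s-t_k)\mathcal{A}(t_k)}w\|_{L^2}\le C(s-t_k)^{\beta}\|(\mathcal{A}(t_k)-\gamma_1)e^{(s-t_k)\mathcal{A}(t_k)}w\|_{L^2}^{1/2}\|e^{(s-t_k)\mathcal{A}(t_k)}w\|_{L^2}^{1/2},
\end{equation*}
and the analytic-semigroup estimate bounds the first factor by $C(s-t_k)^{-1/2}\|w\|_{L^2}^{1/2}$. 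Integrating in $s$ produces a single-step bound of order $(\Delta t)^{\beta+1/2}$; summing the $M=T/\Delta t$ terms and using $\|\mathcal{U}(T,t_{k+1})\|_{L^2}\le C$ and $\|e^{\Delta t\mathcal{A}(t_j)}\|_{L^2}\le C$ (after the standard shift) yields the claimed $T(\Delta t)^{\beta-1/2}$.

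\textbf{Part 2 (Lie--Trotter splitting error).} Telescope again with $S_k:=e^{\Delta t\mathcal{A}(t_k)}$ and $T_k:=e^{\Delta t\mathcal{L}(t_k)}e^{\Delta t\mathcal{C}(t_k)}$. Reducing to a single-step estimate, differentiate $s\mapsto e^{(\Delta t-s)\mathcal{A}(t_k)}e^{s\mathcal{L}(t_k)}e^{s\mathcal{C}(t_k)}$ between $s=0$ and $s=\Delta t$ to obtain
\begin{equation*}
S_k-T_k=\int_{0}^{\Delta t}e^{(\Delta t-s)\mathcal{A}(t_k)}\bigl[e^{s\mathcal{L}(t_k)},\mathcal{C}(t_k)\bigr]e^{s\mathcal{C}(t_k)}\,ds,
\end{equation*}
and then use the formula $[e^{s\mathcal{L}},\mathcal{C}]=\int_{0}^{s}e^{(s-r)\mathcal{L}}[\mathcal{L},\mathcal{C}]e^{r\mathcal{L}}\,dr$ (from differentiating $r\mapsto e^{-r\mathcal{L}}\mathcal{C}e^{r\mathcal{L}}$). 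Applying Lemma \ref{Commu} to $e^{r\mathcal{L}(t_k)}$ acting on a test function and invoking the analytic-semigroup bound $\|(\mathcal{L}(t_k)-\gamma_2)e^{r\mathcal{L}(t_k)}\|\le C/r$ gives $\|[\mathcal{L},\mathcal{C}]e^{r\mathcal{L}}\|_{L^2}\le Cr^{-1/2}$; integrating in $r$ yields $\|[e^{s\mathcal{L}},\mathcal{C}]\|_{L^2}\le C\sqrt{s}$, and a further integration in $s$ gives $\|S_k-T_k\|_{L^2}\le C(\Delta t)^{3/2}$. Summing the $M$ telescoped terms (each prefix and suffix product bounded uniformly in $L^2$) produces $MC(\Delta t)^{3/2}=CT(\Delta t)^{1/2}$.

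\textbf{Main obstacle.} The routine algebra is the two telescoping identities and the Duhamel/variation-of-constants formulas; the delicate point is that Lemmas \ref{Ellipconti} and \ref{Commu} give bounds in terms of $\|(\mathcal{A}-\gamma_1)v\|^{1/2}$ and $\|(\mathcal{L}-\gamma_2)v\|^{1/2}$ rather than $\|v\|$, so I have to \emph{interpolate} between the half-power of an unbounded operator and the semigroup's smoothing rate $r^{-1}$ (losing exactly a factor $r^{-1/2}$ each time). Getting this balance right is what produces the suboptimal exponents $\beta-\frac{1}{2}$ and $\frac{1}{2}$ instead of the naively expected $\beta$ and $1$; I would therefore be careful to verify that the integrand $(s-t_k)^{\beta-1/2}$ in Part 1 and $s^{1/2}$ in Part 2 are indeed integrable and produce the correct overall exponent after the $M=T/\Delta t$ summation. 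A secondary technical point is ensuring that the uniform $L^2$-bounds $\|\mathcal{U}(T,t_{k+1})\|,\|e^{\Delta t\mathcal{A}(t_k)}\|,\|e^{\Delta t\mathcal{L}(t_k)}\|,\|e^{\Delta t\mathcal{C}(t_k)}\|\le C$ hold (rather than the $\le 1$ of \eqref{LCBounded}, which may require the preliminary shift noted in the remark after Lemma \ref{Commu}), since these constants are then raised to powers $\le M$ and must not blow up with $M$.
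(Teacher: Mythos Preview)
Your proposal is correct. Part 1 is essentially identical to the paper's argument: the paper's proof shifts by $\gamma=\max(\gamma_1,\gamma_2)$ and then invokes Theorem~\ref{Euler-appendix} in the appendix, whose proof is precisely your telescoping-plus-Duhamel computation with the analytic-semigroup smoothing $\|\mathcal{A}(s)e^{(\tau-s)\mathcal{A}(s)}\|\le C(\tau-s)^{-1}$ absorbing the half-power from Lemma~\ref{Ellipconti}.

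Part 2 is where you diverge. The paper (via appendix Theorems~\ref{onestep} and~\ref{splitting}) obtains the one-step bound by expanding $e^{\tau(\mathcal{L}+\mathcal{C})}$ through a second application of the variation-of-constants formula, Taylor-expanding $e^{\tau\mathcal{C}}$, and recognising the difference as a quadrature-type error $\tau f(\tau)-\int_0^\tau f(s)\,ds=\tau^2\int_0^1\theta f'(\theta\tau)\,d\theta$ with $f(s)=e^{s\mathcal{L}}\mathcal{C}e^{(\tau-s)\mathcal{L}}x$; the commutator then appears in $f'$, and the analytic smoothing of $e^{(\tau-s)\mathcal{L}}$ on the \emph{right} supplies the $r^{-1/2}$ loss. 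Your route---differentiating the interpolation $s\mapsto e^{(\Delta t-s)\mathcal{A}}e^{s\mathcal{L}}e^{s\mathcal{C}}$ and then expanding $[e^{s\mathcal{L}},\mathcal{C}]$ via the propagated-commutator formula---is a genuinely different and somewhat more streamlined computation: it avoids the Taylor expansion of $e^{\tau\mathcal{C}}$ and the separate remainder bookkeeping ($R_1,R_2$ in the paper), at the cost of one extra nested integral. Both arrive at the same single-step rate $(\Delta t)^{3/2}$ and hence the same global $(\Delta t)^{1/2}$. Your identification of the ``main obstacle'' (the half-power interpolation and the need for the constant shift to secure uniform $L^2$ bounds on the factors) is exactly right; the paper handles the shift explicitly at the outset of its proof rather than as a side remark, but the content is the same.
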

\begin{proof}
	We take $\gamma = \max(\gamma_1,\gamma_2)$, where $\gamma_1$ and $\gamma_2$ are defined in Lemma \ref{Ellipconti} and Lemma \ref{Commu} respectively. Let $\mathcal{U}_{\gamma} (t,s) = e^{-{\gamma} (t-s)} \mathcal{U}(t,s)$ be the solution operator that corresponds to the parabolic equation \eqref{non-autonomousParabolicEQ} with  $\mathcal{A}_{\gamma}(t) = \mathcal{A}(t) - {\gamma}$ and $\mathcal{L}_{\gamma}(t) = \mathcal{L}(t) - {\gamma}$. Then, we have 
	\begin{align}
	\mathcal{U}(T,0) - \prod_{k=0}^{M-1} e^{\Delta t \mathcal{A}(k\Delta t)} = e^{{\gamma} T} (U_{\gamma}(T,0) - \prod_{k=0}^{M-1} e^{\Delta t A_{\gamma}(k\Delta t)}).
	\end{align} 
	The statement in \eqref{OperatorApproximation1} is proved according to Theorem \ref{Euler-appendix}.  
	
    For the Lie-Trotter operator splitting method, we know that 
    \begin{align}
    \prod_{k=0}^{M-1} e^{\Delta t \mathcal{A}(k\Delta t)} - \prod_{k=0}^{M-1} e^{\Delta t\mathcal{L}(k\Delta t)}e^{h\mathcal{C}(k\Delta t)} = e^{{\gamma} T}(\prod_{k=0}^{M-1} e^{\Delta t \mathcal{A}_\gamma(k\Delta t)} - \prod_{k=0}^{M-1} e^{\Delta t\mathcal{L}_\gamma(k\Delta t)}e^{\Delta t\mathcal{C}(k\Delta t)}).
    \end{align} 
	Now according to Lemma \ref{Ellipconti} and Lemma \ref{Commu}, $\mathcal{A}_\gamma(k\Delta t) = \mathcal{L}_\gamma(k\Delta t)+\mathcal{C}(k\Delta t)$, $\mathcal{L}_\gamma$, and $\mathcal{C}$ satisfy the assumptions \ref{analytical} and \ref{commutator}. Thus, applying Theorem \ref{onestep} and Theorem \ref{splitting}, we can prove the estimate \eqref{OperatorApproximation2}.
\end{proof}
The convergence of $\mathcal{K}^{\Delta t}$ in the operator norm  $\mathcal{L}(L^2,H^1)$ has been proved in \cite{batkai2011operator}. 
In Theorem \ref{Errorestimate1}, we obtain the convergence of $\mathcal{K}^{\Delta t}$
in the operator norm $\mathcal{L}(L^2)$. Finally, we can obtain the error estimate for the principal eigenvalue. 
\begin{thm} \label{Errorestimate2}
	Let $e^{\mu(\lambda)T}$ and $e^{\mu_{\Delta t}(\lambda)T}$ denote the principal eigenvalue of the solution operator $\mathcal{U}(T,0)$ and the approximated solution operator $\prod_{k=0}^{M-1} e^{\Delta t\mathcal{L}(k\Delta t)}e^{\Delta t\mathcal{C}(k\Delta t)}$, respectively. Then, we have the error estimate as follows:
	\begin{align}
	\big|e^{\mu(\lambda)T}- e^{\mu_{\Delta t}(\lambda)T}\big| \le C_1(T)(\Delta t)^{\beta-\frac{1}{2}}+C_2(T) (\Delta t)^{\frac{1}{2}}.
	\label{OperatorApproximation3}
	\end{align}  
	Moreover, we can obtain that $|\mu(\lambda)-\mu_{\Delta t}(\lambda)|={O}\big((\Delta t)^{\min(\beta - \frac{1}{2},\frac{1}{2})}\big)$.
\end{thm}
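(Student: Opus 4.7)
The plan is to deduce Theorem 3.4 as a routine perturbation-theoretic consequence of Theorem 3.1, combined with the fact that the principal eigenvalue of the period map $\mathcal{U}(T,0)$ is an isolated simple eigenvalue of a compact operator. First, I would apply the triangle inequality to the two estimates \eqref{OperatorApproximation1} and \eqref{OperatorApproximation2} from Theorem 3.1 to obtain
\[
\Bigl\| \mathcal{U}(T,0) - \prod_{k=0}^{M-1} e^{\Delta t \mathcal{L}(k\Delta t)} e^{\Delta t \mathcal{C}(k\Delta t)} \Bigr\|_{L^2(\mathbb{T}^d)} \le C_1(T)(\Delta t)^{\beta - 1/2} + C_2(T)(\Delta t)^{1/2},
\]
where the right-hand side is a single $L^2$-operator-norm bound on the total approximation error.

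Next, I would invoke the structural properties of $\mathcal{U}(T,0)$: being the time-$T$ map of a uniformly parabolic, space-time periodic equation, it is compact on $L^2(\mathbb{T}^d)$ (by the smoothing property of parabolic semigroups) and, by the Hess / Krein-Rutman theory cited in Section 3, its principal eigenvalue $e^{\mu(\lambda)T}$ is real, simple, and isolated in the spectrum, with a strictly positive spectral gap. Standard perturbation theory of isolated eigenvalues for bounded operators (e.g. Kato, \emph{Perturbation Theory for Linear Operators}, Ch.~IV) then implies that once the operator-norm difference drops below the spectral-gap-dependent threshold, the approximating operator $\hat{\mathcal{U}}_{\Delta t} := \prod_{k=0}^{M-1} e^{\Delta t \mathcal{L}(k\Delta t)} e^{\Delta t \mathcal{C}(k\Delta t)}$ has exactly one eigenvalue in a neighborhood of $e^{\mu(\lambda)T}$, which we identify as $e^{\mu_{\Delta t}(\lambda)T}$, and this eigenvalue satisfies
\[
\bigl| e^{\mu(\lambda)T} - e^{\mu_{\Delta t}(\lambda)T} \bigr| \le C\, \bigl\| \mathcal{U}(T,0) - \hat{\mathcal{U}}_{\Delta t} \bigr\|_{L^2(\mathbb{T}^d)}.
\]
Absorbing $C$ into the constants $C_1(T), C_2(T)$ yields the first inequality of the theorem.

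Finally, for the bound on $|\mu(\lambda) - \mu_{\Delta t}(\lambda)|$ itself, I note that $e^{\mu(\lambda)T}$ is a fixed positive number independent of $\Delta t$, so $e^{\mu_{\Delta t}(\lambda)T}$ is bounded away from zero for $\Delta t$ small. Since $\log$ is locally Lipschitz on any compact subset of $(0,\infty)$, dividing through by $T$ gives
\[
|\mu(\lambda) - \mu_{\Delta t}(\lambda)| = T^{-1}\bigl| \log e^{\mu(\lambda)T} - \log e^{\mu_{\Delta t}(\lambda)T}\bigr| = O\!\left((\Delta t)^{\min(\beta - 1/2,\, 1/2)}\right).
\]
The principal obstacle is ensuring that the perturbation constant in Kato's theorem can be chosen uniformly in $\Delta t$; this reduces to showing that the spectral gap separating $e^{\mu(\lambda)T}$ from the rest of $\sigma(\mathcal{U}(T,0))$ is not shrunk by the splitting approximation, so that the eigenvalue identified on the approximate side is genuinely the principal one rather than an accidental mode drifting into the neighborhood. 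This follows from the uniform compactness of the family $\hat{\mathcal{U}}_{\Delta t}$ (each factor $e^{\Delta t \mathcal{L}(k\Delta t)}$ is smoothing and bounded uniformly by \eqref{LCBounded}) together with upper semicontinuity of the spectrum under operator-norm perturbations.
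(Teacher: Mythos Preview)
Your proposal is correct and follows essentially the same route as the paper: combine the two estimates of Theorem~\ref{Errorestimate1} via the triangle inequality, then invoke Kato's perturbation theory for an isolated simple eigenvalue to pass from the operator-norm bound to the eigenvalue bound. The paper's proof is in fact terser than yours---it simply cites ``the standard spectral theorem'' from \cite{kato2013perturbation} and applies the triangle inequality---whereas you spell out the compactness, the spectral gap, the local Lipschitz property of $\log$, and the uniformity-in-$\Delta t$ issue; these elaborations are sound and only add rigor to what the paper leaves implicit.
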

\begin{proof}
	According to the standard spectral theorem \cite{kato2013perturbation}, the principal eigenvalue $e^{\mu(\lambda)}$ of the solution operator $\mathcal{U}(T,0)$ and the principal eigenvalue $e^{\mu_{\Delta t}(\lambda)}$ of the approximated solution operator $\prod_{k=0}^{M-1} e^{\Delta t\mathcal{L}(k\Delta t)}e^{\Delta t\mathcal{C}(k\Delta t)}$ satisfy
	\begin{align}
	\big|e^{\mu(\lambda)T}- e^{\mu_{\Delta t}(\lambda)T}\big| \le C_3\big|\big|U(T,0) - \prod_{k=0}^{N-1} e^{\Delta t\mathcal{L}(k\Delta t)}e^{\Delta t\mathcal{C}(k\Delta t)}\big|\big|_{L^2(\mathbb{T}^d)}. \label{OperatorApproximation4}
	\end{align} 
	By using the triangle inequality for the right hand side of \eqref{OperatorApproximation4} 
	and the estimated results from Theorem \ref{Errorestimate1}, we can get the error estimate 
	\eqref{OperatorApproximation3}. The error estimate for $|\mu(\lambda)-\mu_{\Delta t}(\lambda)|$ can be obtained accordingly.
\end{proof}	
In this paper, we assume that $\textbf{b}(t,\textbf{x})$ and $c(t,\textbf{x})$ in the operator $\mathcal{A}$ are uniformly Lipschitz. Thus, the error of the principal eigenvalue obtained by the Lie-Trotter operator splitting method
is at least $O((\Delta t)^{\frac{1}{2}})$.

\subsection{Analysis of the Lagrangian particle method}\label{sec:AnalysisLagrangianMethod}
\noindent
We consider the Feynman-Kac semigroup $\Phi^{\mathcal{A}}$ associated with an arbitary operaor $\mathcal{A}$. The action of the Feynman-Kac semigroup $\Phi^{\mathcal{A}}$ on a probability measure
$\nu$ is defined by 
\begin{equation}
\Phi^{\mathcal{A}} (\nu)(\phi) = \frac{ (\nu)(\mathcal{A}\phi)}{(\nu)(\mathcal{A}1)}, \quad
\forall \phi\in L^2(\mathbb{T}^d). \label{DefineFKsemigroup}
\end{equation}
Moreover, we denote $\Phi^{\mathcal{A}}_{n}=(\Phi^{\mathcal{A}})^{n}$. 
The Feynman-Kac semigroup operation satisfies the following property. 
\begin{lem} \label{multible}
	For any operaors $\mathcal{A}$, $\mathcal{B}$ in $\mathcal{L}(L^2(\mathbb{T}^d))$,
	$\Phi^{\mathcal{A}\mathcal{B}} = \Phi^{\mathcal{B}} \Phi^{\mathcal{A}}.$
\end{lem}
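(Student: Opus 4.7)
The plan is to prove the identity by unpacking both sides directly from the definition \eqref{DefineFKsemigroup} and verifying they agree when tested against an arbitrary probability measure $\nu$ and an arbitrary test function $\phi \in L^2(\mathbb{T}^d)$. Nothing dynamical or deep is at play; the identity is an algebraic consequence of the normalization structure in \eqref{DefineFKsemigroup}, and the $\nu(\mathcal{A}1)$ denominators will cancel cleanly. I only need $\mathcal{A}$ and $\mathcal{B}$ to be bounded linear operators on $L^2(\mathbb{T}^d)$ whose action on $\nu$ (via $\nu(\mathcal{A}\cdot)$) yields a finite, nonzero mass on the constant function $1$, which is the standing assumption wherever $\Phi^{\mathcal{A}}$ and $\Phi^{\mathcal{B}}$ are defined in this paper.

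First I would fix $\nu \in \mathcal{P}(\mathbb{T}^d)$ and $\phi \in L^2(\mathbb{T}^d)$ and compute the left-hand side directly:
\begin{equation*}
\Phi^{\mathcal{A}\mathcal{B}}(\nu)(\phi) \;=\; \frac{\nu(\mathcal{A}\mathcal{B}\phi)}{\nu(\mathcal{A}\mathcal{B}1)}.
\end{equation*}
Next I would set $\mu := \Phi^{\mathcal{A}}(\nu) \in \mathcal{P}(\mathbb{T}^d)$ and expand the right-hand side in two stages. By definition, for any $\psi \in L^2(\mathbb{T}^d)$,
\begin{equation*}
\mu(\psi) \;=\; \Phi^{\mathcal{A}}(\nu)(\psi) \;=\; \frac{\nu(\mathcal{A}\psi)}{\nu(\mathcal{A}1)}.
\end{equation*}
Applying this with $\psi = \mathcal{B}\phi$ and with $\psi = \mathcal{B}1$, and then plugging into the outer Feynman--Kac normalization gives
\begin{equation*}
\bigl(\Phi^{\mathcal{B}}\Phi^{\mathcal{A}}\bigr)(\nu)(\phi) \;=\; \Phi^{\mathcal{B}}(\mu)(\phi) \;=\; \frac{\mu(\mathcal{B}\phi)}{\mu(\mathcal{B}1)} \;=\; \frac{\nu(\mathcal{A}\mathcal{B}\phi)/\nu(\mathcal{A}1)}{\nu(\mathcal{A}\mathcal{B}1)/\nu(\mathcal{A}1)}.
\end{equation*}
The common factor $\nu(\mathcal{A}1)$ cancels, producing $\nu(\mathcal{A}\mathcal{B}\phi)/\nu(\mathcal{A}\mathcal{B}1)$, which matches the left-hand side.

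There is no real obstacle here beyond being careful with notation: the pairing $\nu(\mathcal{A}\cdot)$ is linear in its argument, so $\mu(\mathcal{B}\phi)$ really does equal $\nu(\mathcal{A}\mathcal{B}\phi)/\nu(\mathcal{A}1)$, which is the only fact being used. Since $\nu$ and $\phi$ were arbitrary, the identity $\Phi^{\mathcal{A}\mathcal{B}} = \Phi^{\mathcal{B}}\Phi^{\mathcal{A}}$ follows. I would finish by remarking that the order reversal is a consequence of the fact that the outer operator in a composition acts first on the test function under the Feynman--Kac normalization, which is precisely why \eqref{operatorPhiKDeltat} is written in the order $\Phi^{\mathcal{K}^{\Delta t,0}}\Phi^{\mathcal{K}^{\Delta t,1}}\cdots$ rather than the reverse.
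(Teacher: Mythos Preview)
Your proof is correct and follows essentially the same approach as the paper: both arguments unwind the definition \eqref{DefineFKsemigroup} on an arbitrary $\nu$ and $\phi$ and observe that the factor $\nu(\mathcal{A}1)$ cancels between numerator and denominator. The only cosmetic difference is that the paper starts from $\Phi^{\mathcal{A}\mathcal{B}}(\nu)(\phi)$ and multiplies and divides by $\nu(\mathcal{A}1)$ to recognize $\Phi^{\mathcal{B}}\Phi^{\mathcal{A}}(\nu)(\phi)$, whereas you start from $\Phi^{\mathcal{B}}\Phi^{\mathcal{A}}(\nu)(\phi)$ and cancel the common factor to arrive at $\Phi^{\mathcal{A}\mathcal{B}}(\nu)(\phi)$.
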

\begin{proof}
	Let $\nu$ be a probability measure and $\phi$ be a function in $L^2(\mathbb{T}^d)$. Then, we 
	can easily verify that  
	\begin{align}
	\Phi^{\mathcal{A}\mathcal{B}}(\nu)(\phi) &= \frac{ (\nu)(\mathcal{A}\mathcal{B}\phi)}{(\nu)(\mathcal{A}\mathcal{B}1)} =  \frac{ (\nu)(\mathcal{A}\mathcal{B}\phi)}{(\nu)(\mathcal{A} 1)} \frac{(\nu)(\mathcal{A} 1)}{(\nu)(\mathcal{A}\mathcal{B} 1)} \nonumber \\
	&=\frac{\Phi^{\mathcal{A}}(\nu)(\mathcal{B} \phi)}{\Phi^{\mathcal{A}}(\nu)(\mathcal{B} 1)} = \Phi^{\mathcal{B}} \Phi^{\mathcal{A}} (\nu) (\phi).
	\end{align}
\end{proof}
Recall that the operator $\Phi_n^{\mathcal{K}^{\Delta t}}$ defined in \eqref{operatorPhiKDeltat} is a compostion of the Feynman-Kac semigroup $\Phi^{\mathcal{K}^{\Delta t,i}}$ associated with the operator $\mathcal{K}^{\Delta t,i}$; see \eqref{operatorPhiKiDeltat}. In the sequel, we will prove the operator $\Phi_n^{\mathcal{K}^{\Delta t}}$ satisfies the uniform minorization and boundedness condition, which guarantees the existence of an invariant measure.  
\begin{thm}\label{bam}
There exists a probability measure $\eta$ so that the operator $\mathcal{K}^{\Delta t}$ satisfies a uniform minorization and boundedness condition as follows
\begin{equation} 
\epsilon \eta(\phi) \le \mathcal{K}^{\Delta t} (\phi) (\textbf{x}) \le \gamma \eta(\phi), 
\quad \forall \textbf{x} \in \mathbb{T}^d, \quad \forall \phi\in L^2(\mathbb{T}^d),
\label{MinorizationBoundedCondition}
\end{equation}
where $0<\epsilon<\gamma $ are independent of $\Delta t$.
Moreover, when $\Delta t \to 0$ the limit operator is the exact solution operator $\mathcal{U}(T,0)$, which also satisfies this uniform minorization and boundedness condition.
\end{thm}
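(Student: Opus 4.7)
The plan is to realize $\mathcal{K}^{\Delta t}$ as an integral operator on $\mathbb{T}^d$ with a strictly positive kernel, take $\eta$ to be normalized Lebesgue measure on $\mathbb{T}^d$, and then reduce the minorization--boundedness condition to uniform (in $\Delta t$) two-sided pointwise bounds on that kernel. Unrolling $\mathcal{K}^{\Delta t,i} = P_i^{\Delta t} e^{\Delta t \mathcal{C}(t_i)}$ and using that each $P_i^{\Delta t}$ acts through the periodized Gaussian transition density $p_i(\textbf{x},\textbf{y})$ (with mean shift $\textbf{b}(t_i,\textbf{x})\Delta t$ and variance $2\kappa\Delta t$) shows that $\mathcal{K}^{\Delta t}\phi(\textbf{x}) = \int_{\mathbb{T}^d} K^{\Delta t}(\textbf{x},\textbf{y})\phi(\textbf{y})\,d\textbf{y}$, where $K^{\Delta t}$ admits a discrete path--integral representation
\begin{equation*}
K^{\Delta t}(\textbf{x},\textbf{y}) = \int_{(\mathbb{T}^d)^{M-1}} \prod_{k=0}^{M-1} p_k(\textbf{x}_k,\textbf{x}_{k+1})\, e^{c(t_k,\textbf{x}_{k})\Delta t}\, d\textbf{x}_1\cdots d\textbf{x}_{M-1},
\end{equation*}
subject to $\textbf{x}_0=\textbf{x}$ and $\textbf{x}_M=\textbf{y}$ (up to the direction in which the composition is performed). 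With $\eta$ the Lebesgue measure, the desired condition for $\phi\ge 0$ is then equivalent to $\epsilon\le K^{\Delta t}(\textbf{x},\textbf{y})\le\gamma$ uniformly in $\textbf{x},\textbf{y}$ and $\Delta t$.

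Next I would peel off the exponential factors. Since $c$ is continuous on $\mathbb{T}^d\times[0,T]$ and $M\Delta t=T$, one has $e^{-\|c\|_\infty T}\le \prod_{k=0}^{M-1} e^{c(t_k,\textbf{x}_k)\Delta t}\le e^{\|c\|_\infty T}$ pointwise in the path. It therefore suffices to establish two-sided bounds on the pure drift--diffusion transition density $q^{\Delta t}(\textbf{x},\textbf{y})$ obtained by integrating $\prod_k p_k$ only. For the upper bound, using the boundedness of $\textbf{b}$, each periodized Gaussian $p_k$ can be dominated by a $\Delta t$-independent constant multiple of a translation of the driftless periodized Gaussian; writing the resulting $M$-fold torus convolution on the Fourier side shows it converges to the exact heat kernel of $\kappa\Delta$ on $\mathbb{T}^d$ at time $T$, plus a $\Delta t$-independent perturbation, and hence is bounded above by a constant $\gamma_0=\gamma_0(\kappa,T,d,\|\textbf{b}\|_\infty)$.

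The main technical obstacle is the uniform lower bound. I would split the composition at an intermediate index $m\approx M/2$, so that after $m$ steps the accumulated variance is $2\kappa m\Delta t\approx \kappa T$, which exceeds the torus diameter squared. A Fourier/periodization estimate then shows that the $m$-step periodized Gaussian (even with bounded drift, since $\|\textbf{b}\|_\infty T$ only shifts the mean) is bounded below by a strictly positive constant depending only on $\kappa$, $T$, $d$, and $\|\textbf{b}\|_\infty$, uniformly in the starting point and in $\Delta t$. Applying the same bound to both halves and combining via Chapman--Kolmogorov yields $q^{\Delta t}(\textbf{x},\textbf{y})\ge\epsilon_0>0$; multiplying by $e^{-\|c\|_\infty T}$ gives the uniform lower bound on $K^{\Delta t}$. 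The delicate point is that the single-step density concentrates as $\Delta t\to 0$, so the lower bound cannot be obtained step-by-step but only after summing enough steps to cover $\mathbb{T}^d$.

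For the final claim about the limit operator, Theorem \ref{Errorestimate1} (combined with \eqref{MarkovProcessApproximation} to account for the $O((\Delta t)^2)$ discrepancy between $P_i^{\Delta t}$ and $e^{\Delta t\mathcal{L}(t_i)}$ per step) yields $\mathcal{K}^{\Delta t}\to \mathcal{U}(T,0)$ in $\mathcal{L}(L^2(\mathbb{T}^d))$. Since $\mathcal{A}(t)$ is strongly elliptic with smooth coefficients on the compact torus, classical Aronson--type Gaussian bounds give a smooth, strictly positive integral kernel for $\mathcal{U}(T,0)$ on $\mathbb{T}^d\times\mathbb{T}^d$, bounded above and below; therefore $\mathcal{U}(T,0)$ satisfies the same condition with constants obtained as the $\Delta t$-uniform bounds of the approximating kernels.
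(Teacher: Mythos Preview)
Your proposal shares the paper's first reduction---peeling off the potential via $e^{-\|c\|_\infty T}P^{\Delta t}(\phi)\le \mathcal{K}^{\Delta t}(\phi)\le e^{\|c\|_\infty T}P^{\Delta t}(\phi)$, where $P^{\Delta t}$ is the pure drift--diffusion transition operator over one period---but then takes a genuinely different route. You fix $\eta$ to be normalized Lebesgue measure and seek two-sided pointwise bounds on the $M$-step kernel $q^{\Delta t}(\textbf{x},\textbf{y})$ via Fourier convolution, a midpoint Chapman--Kolmogorov split, and Aronson estimates for the limit operator. The paper never touches kernels: it writes the Euler iterate as $\bm X_M=\bm X_0+\bm G_M+\bm F_M$, where $\bm G_M=\sqrt{2\kappa\Delta t}\sum_{i=0}^{M-1}\bm\omega_i\sim\mathcal N(0,2\kappa T\,\mathrm{Id}_d)$ has covariance \emph{independent of} $\Delta t$ and $|\bm F_M|\le T\|\textbf{b}\|_\infty$; the minorizing measure is then taken to be $\eta(S)=Z_R^{-1}\inf_{|Q|\le R}\int_{S+Q}e^{-|\bm y|^2/(2\kappa T)}d\bm y$. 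Boundedness follows immediately because this $\eta$ has a strictly positive Lebesgue density, and the continuous-time case is handled by the identical decomposition in It\^o form rather than by Aronson bounds. Aggregating all Gaussian increments at once is precisely the device that removes $\Delta t$ in one line; your midpoint split achieves the same effect for the lower bound with more machinery, while buying you the more explicit choice $\eta=\text{Lebesgue}$.

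One caution on your upper bound: the step ``each $p_k$ is dominated by a $\Delta t$-independent constant times a translated driftless periodized Gaussian'' does not hold as written. On $\mathbb{R}^d$ the ratio $p_k(\textbf{x},\textbf{y})/p_k^0(\textbf{x},\textbf{y})=\exp\big((\textbf{y}-\textbf{x})\cdot\textbf{b}(t_k,\textbf{x})/(2\kappa)-|\textbf{b}(t_k,\textbf{x})|^2\Delta t/(4\kappa)\big)$ is unbounded in $\textbf{y}-\textbf{x}$, periodization does not rescue this since the lattice sum involves arbitrarily large shifts, and in any case a per-step multiplicative constant $C>1$ compounds to $C^M\to\infty$ as $\Delta t\to 0$. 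To make the upper bound uniform you would need either a discrete Aronson-type estimate on the full $M$-step kernel, or to reuse your own splitting idea (bound $q^{\Delta t}(\textbf{x},\textbf{y})=\int q_1(\textbf{x},\textbf{z})q_2(\textbf{z},\textbf{y})\,d\textbf{z}\le \sup_{\textbf{z},\textbf{y}}q_2(\textbf{z},\textbf{y})$ and control the half-period density $q_2$ directly).
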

\begin{proof}
We first define an operator $P^{\Delta t} = \prod_{i=0}^{M-1} P_{t_i}^{\Delta t}$, which corresponds to the case when $c(t,\textbf{x}) = 0$ in Eq.\eqref{non-autonomousParabolicEQ}. Since $c(t,\textbf{x})$ is bounded (i.e. $c_1 \le c(t,\textbf{x}) \le c_2$), one can easily obtain the following estimate based on the Feynman-Kac formula   
\begin{equation}
P^{\Delta t}(\phi)e^{c_1T} \le \mathcal{K}^{\Delta t}(\phi) \le P^{\Delta t} (\phi)e^{c_2T}. 
\end{equation}
Thus, to estimate the bounds for $\mathcal{K}^{\Delta t}$, we only need to study the operator  $P^{\Delta t}$. Moreover, it is sufficient to prove that there exist a probability measure $\eta$ and a constant $\epsilon > 0$
so that for any indicator function of a Borel set $S \subset \mathbb{T}^d$ the following result holds 
\begin{equation}
\mathbb{P}({\bf X}_M \in S| {\bf X}_0 = \textbf{x}) \ge \epsilon \eta(S),
\end{equation} 
where ${\bf X}_i$ are defined in the  scheme \eqref{NumericalMethod4SDE} as the numerical solution to the SDE \eqref{SDE-system}. The idea of the proof is to explicitly rewrite ${\bf X}_M$ as a perturbation of the reference evolution corresponding to $\textbf{b}  = 0$. According to the numerical scheme \eqref{NumericalMethod4SDE}, we have 
\begin{equation}\label{split}
{\bf X}_M = {\bf X}_0 + \bm G_M + \bm F_M, 
\end{equation} 
where 
\begin{equation}
\bm G_M = \sqrt{2\kappa\Delta t}\sum_{i=0}^{M-1}\bm \omega_i, \quad \text{and}, \quad 
\bm F_M =\Delta t\sum_{i=0}^{M-1} \textbf{b}(i\Delta t, {\bf X}_i).
\end{equation} 
We know that $|\bm F_M| \le T||\textbf{b}||_{L^\infty}$ and $\bm G_M$ is a Gaussian random variable with covariance matrix $2\kappa T \text{I}_d$, where $\text{I}_d$ is the $d$-dimensional identity matrix. Therefore  
\begin{align}
\mathbb{P}({\bf X}_M \in S| {\bf X}_0 = \bm x) &\ge \mathbb{P}(\bm G_M \in S- \bm x-\bm F_M) \nonumber \\
		&=(\frac{1}{2\pi \kappa T})^{d/2} \int_{S-\bm x-\bm F_m} \exp(-\frac{|\bm y|^2}{2\kappa T}) d\bm y.
\end{align}
Since the state space $\mathbb{T}^d$ is compact, we can find $R >0$ such that $|\bm x + F_M| \le R$ for all $\bm x \in \mathbb{T}^d$. Thus, we define the probability measure $\eta$ as 
\begin{align}
\eta(S) = Z_R^{-1} \inf_{|Q| \le R} \int_{S+Q} \exp (-\frac{|\bm y|^2}{2\kappa T}) d\bm y,\quad 
\forall S \subset \mathbb{T}^d,
\end{align}
where $Z_R$ is the normalization constant. Setting $\epsilon =  Z_R(4\pi\kappa T)^{-d/2}$, 
we can easily verify that  $\eta(S)\ge Z_R^{-1} \exp(-\frac{|R+1|^2}{2\kappa T})|S|$, which satisfies
a uniform minorization condition. 

The uniform boundedness condition is  automatically satisfied since  $\eta$ has a positive density with respect to Lebesgue measure. 

The situation when the exact solution operator is considered can be proved by changing Eq.\eqref{split} into an Ito integration form
\begin{align}
\textbf{X}^{0,\textbf{x}}_t = \textbf{X}^{0,\textbf{x}}_0+ \int_{0}^{t}\textbf{b}(s,\textbf{X}^{0,\textbf{x}}_s)ds 
+ \int_{0}^{t} \sqrt{2\kappa}d\textbf{w}(s) 
\end{align}
and then go through the same procedure.
\end{proof}
 
We now represent an important result that ensures the existence of the limiting measure for the 
discretized Feymann-Kac dynamics. The detailed proof of Theorem \ref{thm:existenceinvariantmeasure} can be found in \cite{Meyn:92} or Corollary 2.5 in \cite{moral2001on}. 
\begin{thm}\label{thm:existenceinvariantmeasure}
Suppose the minorization and boundedness conditions \eqref{MinorizationBoundedCondition} hold true. Then,  $\Phi_n^{\mathcal{K}^{\Delta t}}$ admits an invariant measure $\nu_{\Delta t}$, whose density function is the eigenfunction of the operator $(\mathcal{K}^{\Delta t})^\star$, the adjoint operator of the solution operator $\mathcal{K}^{\Delta t}$. Moreover, for any initial distribution  $\nu_0\in \mathcal{P}(\mathbb{T}^d)$, we have 
\begin{equation}
\big|\big|\Phi_n^{\mathcal{K}^{\Delta t}} (\nu_0) - \nu_{\Delta t}\big|\big|_{TV} \le 2(1-\frac{\epsilon}{\gamma} )^n,
\label{CovInvariantMeasure}
\end{equation}
where $||\cdot||_{TV}$ is the total variation norm and $0<\epsilon<\gamma $ are the parameters defined in the  minorization and boundedness conditions in \eqref{MinorizationBoundedCondition}. The estimate \eqref{CovInvariantMeasure} is also true when changing $\mathcal{K}^{\Delta t}$ to the exact solution operator $\mathcal{U}(T,0)$.
\end{thm}

\begin{cor} The principal eigenvalue $\mu_{\Delta t}$ of $\mathcal{K}^{\Delta t}$ satisfies the following relation 
\begin{equation}
e^{\mu_{\Delta t}(\lambda)T} = \nu_{\Delta t} \mathcal{K}^{\Delta t} 1 = \Phi_n^{\mathcal{K}^{\Delta t}} (\nu_0) \mathcal{K}^{\Delta t} 1 + \rho_n,
\end{equation}
where $\nu_0$ is any bounded non-negative initial probability measure, $T$ is the period of the time parameter, and $\rho_n=O(1-\frac{\epsilon}{\gamma})^n$.
\end{cor}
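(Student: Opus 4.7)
The plan is to prove the two equalities separately, using invariance of $\nu_{\Delta t}$ for the first and exponential TV-convergence for the second. Neither step requires new machinery: everything reduces to unwinding the definition of the Feynman-Kac semigroup and then applying Theorems \ref{bam} and \ref{thm:existenceinvariantmeasure}.

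For the first equality, I would start from the fixed-point identity $\Phi^{\mathcal{K}^{\Delta t}}(\nu_{\Delta t}) = \nu_{\Delta t}$ guaranteed by Theorem \ref{thm:existenceinvariantmeasure}. By the definition \eqref{DefineFKsemigroup}, this reads, for every $\phi \in L^2(\mathbb{T}^d)$,
\begin{equation*}
\nu_{\Delta t}\bigl(\mathcal{K}^{\Delta t}\phi\bigr) \;=\; \nu_{\Delta t}\bigl(\mathcal{K}^{\Delta t}1\bigr)\,\nu_{\Delta t}(\phi),
\end{equation*}
so $\nu_{\Delta t}$ is a left eigenvector of $\mathcal{K}^{\Delta t}$ with eigenvalue $\nu_{\Delta t}(\mathcal{K}^{\Delta t}1)$, or equivalently its density is a strictly positive eigenfunction of $(\mathcal{K}^{\Delta t})^{*}$. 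Since the minorization/boundedness condition \eqref{MinorizationBoundedCondition} makes $\mathcal{K}^{\Delta t}$ a strictly positive compact operator on $L^2(\mathbb{T}^d)$, a Krein-Rutman argument ensures that such a strictly positive eigenfunction can only correspond to the principal eigenvalue, which is $e^{\mu_{\Delta t}(\lambda)T}$. This yields $e^{\mu_{\Delta t}(\lambda)T} = \nu_{\Delta t}(\mathcal{K}^{\Delta t}1)$.

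For the second equality, I would use Theorem \ref{thm:existenceinvariantmeasure} to bound
\begin{equation*}
\bigl\|\Phi_n^{\mathcal{K}^{\Delta t}}(\nu_0) - \nu_{\Delta t}\bigr\|_{TV} \;\le\; 2\bigl(1-\tfrac{\epsilon}{\gamma}\bigr)^n,
\end{equation*}
and pair this with the uniform sup-bound $\|\mathcal{K}^{\Delta t}1\|_{L^\infty} \le \gamma$ coming from the right inequality in \eqref{MinorizationBoundedCondition} (applied with $\phi = 1$ and $\eta$ a probability measure). By the duality definition of the TV norm,
\begin{equation*}
\bigl|\Phi_n^{\mathcal{K}^{\Delta t}}(\nu_0)\mathcal{K}^{\Delta t}1 \;-\; \nu_{\Delta t}\mathcal{K}^{\Delta t}1\bigr| \;\le\; \|\mathcal{K}^{\Delta t}1\|_{L^\infty}\,\bigl\|\Phi_n^{\mathcal{K}^{\Delta t}}(\nu_0) - \nu_{\Delta t}\bigr\|_{TV} \;\le\; 2\gamma\bigl(1-\tfrac{\epsilon}{\gamma}\bigr)^n,
\end{equation*}
which is exactly the claimed $\rho_n = O\bigl((1-\epsilon/\gamma)^n\bigr)$.

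Taken together, these two steps deliver the corollary, and I do not expect a serious obstacle. The only subtle point to verify carefully is the identification, in the first step, of $\nu_{\Delta t}(\mathcal{K}^{\Delta t}1)$ as the \emph{principal} eigenvalue rather than just some eigenvalue: this hinges on positivity and simplicity of the leading spectrum of $\mathcal{K}^{\Delta t}$, which is exactly what Theorem \ref{bam} supplies through the uniform minorization condition. Beyond that, the rest is bookkeeping with the duality estimate for the TV norm.
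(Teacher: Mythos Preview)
Your proposal is correct and follows essentially the same approach as the paper. The only difference is one of packaging: the paper quotes directly from Theorem~\ref{thm:existenceinvariantmeasure} that the density of $\nu_{\Delta t}$ is already the eigenfunction of $(\mathcal{K}^{\Delta t})^{*}$ associated with the principal eigenvalue, whereas you re-derive this from the fixed-point identity and appeal to Krein--Rutman; similarly, you make explicit the $\|\mathcal{K}^{\Delta t}1\|_{L^\infty}\le\gamma$ bound in the TV duality step, which the paper leaves implicit.
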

\begin{proof}
Theorem \ref{thm:existenceinvariantmeasure} implies that for any bounded non-negative measure $\nu_0$, the measure $\Phi_n^{\mathcal{K}^{\Delta t}}(\nu_0)$ converges to an invariant measure $\nu_{\Delta t}$ in the weak sense, that is
\begin{equation}
\nu_{\Delta t} \phi := \int_{\mathbb{T}^d} \phi d\nu_{\Delta t} = \Phi_n^{\mathcal{K}^{\Delta t}} (\nu_0)(\phi) + O(1-\frac{\epsilon}{\gamma})^n,
\end{equation}
for any bounded non-negative measurable function $\phi$.  Then, we take $\phi = \mathcal{K}^{\Delta t} 1$. From the fact that the density function of  $\nu_{\Delta t}$ is the eigenfunction of the operator $(\mathcal{K}^{\Delta t})^\star$, we get that 
\begin{equation} 
\nu_{\Delta t} (\mathcal{K}^{\Delta t} 1) =  ((\mathcal{K}^{\Delta t})^\star \nu_{\Delta t}) 1 = e^{\mu_{\Delta t}(\lambda)T} (\nu_{\Delta t} 1) = e^{\mu_{\Delta t}(\lambda)T}.
\end{equation}
Thus, we finish the proof.
\end{proof}

Now we compute the principal eigenvalue $\mu_{\Delta t}(\lambda)$.
\begin{lem}\label{eig-estimate}
Denote $\nu_{\Delta t}^k = \prod_{i=0}^{k-1} \Phi^{\mathcal{K}^{\Delta t,i}} \nu_{\Delta t}$, $1\leq k \leq M$. Let $e_k = (\nu_{\Delta t}^k) (\mathcal{K}^{\Delta t,k} 1)$ denote the changing of mass. Then, we have 
\begin{equation} 
e^{\mu_{\Delta t}(\lambda)T} = \prod_{k=0}^{M-1} e_k, \quad \text{and} \quad  \mu_{\Delta t}(\lambda) = \frac{1}{M\Delta t} \sum_{k=0}^{M-1} \log(e_k). 
\label{formula4mu}
\end{equation} 
\end{lem}
\begin{proof}
 It is easy to verify that 
\begin{align}
\nu_{\Delta t}^{M} = \nu_{\Delta t}^0 = \nu_{\Delta t},\quad 
(\mathcal{K}^{\Delta t,k})^\star\nu_{\Delta t}^k = e_k\nu_{\Delta t}^{k+1},
\end{align}
for some positive numbers $e_k$'s. These $e_k$'s are referred to as the changing of the mass for each small step $\mathcal{K}^{\Delta t,k}$. Thus, we have $(\mathcal{K}^{\Delta t})^\star\nu_{\Delta t}^0 = (\prod_{k=0}^{M-1} e_k)\nu_{\Delta t}^0$, which means $e^{\mu_{\Delta t}(\lambda)T} = \prod_{k=0}^{M-1} e_k$. By taking the logarithm, we obtain the formula for $\mu_{\Delta t}(\lambda)$ in \eqref{formula4mu}, where $M\Delta t=T$.
\end{proof}
Finally, we show the error estimate of the Lagrangian particle method in computing the principal eigenvalue of parabolic operators as follows. 
\begin{thm}
Suppose $\textbf{b}(t,\textbf{x})$ and $c(t,\textbf{x})$ in $\mathcal{A}(t)$ \eqref{DefinitionOperatorA} are bounded, smooth and periodic in each component of $\textbf{x}$, and uniformly H\"{o}lder continuous in $t$. Let $\mu^n_{\Delta t}(\lambda)=	(M\Delta t)^{-1}\sum_{k=0}^{M-1} \log\big( N^{-1}\sum_{p=1}^{N} 
\exp(c(t_{M-1-k},\widetilde\xi_{k}^{p,n-1})\Delta t)\big)$ denote the approximate principal eigenvalue obtained by the $N$-IPS method, where $\widetilde\xi_{k}^{p,n-1}$, $k = 0,\cdots,M-1$, $p = 1,\cdots,N$, $n$ is the iteration number and $\Delta t$ are defined in the Algrithm \ref{Algorithm-TimeFependent}. Let $\mu(\lambda)$ denote the principal eigenvalue of \eqref{KPPTimePeriodic} defined in Eq.\eqref{Feynman-Kacformula}. Then, we have the following convergence result
\begin{equation}
\lim_{N \to \infty} (M\Delta t)^{-1}\sum_{k=0}^{M-1} \log\Big( N^{-1}\sum_{p=1}^{N} 
\exp(c(t_{M-1-k},\widetilde\xi_{k}^{p,n-1})\Delta t)\Big) = \mu(\lambda) + O\big((1-\frac{\epsilon}{\gamma})^n\big) + O\big((\Delta t)^{\frac{1}{2}}\big),
\label{mainresult-NIPSPincipaleig}
\end{equation}
where $0<\epsilon<\gamma $ are the parameters defined in the  minorization and boundedness conditions in \eqref{MinorizationBoundedCondition}.
\end{thm}

\begin{proof}
By the converence property of the $N$-IPS, we know that the empirical distribution of the particles $\{\widetilde\xi_{k}^{p,n-1} \}_{p = 1,\cdots,N}$ will weakly converge to the distribution $\prod_{i=0}^{k-1} \Phi^{\mathcal{K}^{\Delta t,i}}    \Phi_{n-1}^{\mathcal{K}^{\Delta t}}  \nu_0$, $1\leq k \leq M$,  when $N \to \infty$. Let $e_{k,n}^{N}=N^{-1}\sum_{p=1}^{N} \text{exp}(c(t_{M-1-k},\widetilde\xi_{k}^{p,n-1})\Delta t)$ denote the increasing of the mass for each small step $\mathcal{K}^{\Delta t,k}$. Then, we can get that $\prod_{i=0}^{k-1} \Phi^{\mathcal{K}^{\Delta t,i}}    \Phi_{n-1}^{\mathcal{K}^{\Delta t}}  \nu_0$, $1\leq k \leq M$, satisfy 
\begin{align}
\lim_{N\to \infty} e_{k,n}^{N} = (\prod_{i=0}^{k-1} \Phi^{\mathcal{K}^{\Delta t,i}}    \Phi_{n-1}^{\mathcal{K}^{\Delta t}}  \nu_0) (\mathcal{K}^{\Delta t,k} 1).
\end{align} 
According to Theorem \ref{thm:existenceinvariantmeasure}, we have that $\Phi_{n-1}^{\mathcal{K}^{\Delta t}} \nu_0 =  \nu_{\Delta t} + \delta_n$, where $||\delta_n||_{TV} \le 2(1-\frac{\epsilon}{\gamma} )^n$. 
This implies that 
\begin{align}
\lim_{N\to \infty} e_{k,n}^{N} = (\prod_{i=0}^{k-1} \Phi^{\mathcal{K}^{\Delta t,i}}   \nu_{\Delta t}) (\mathcal{K}^{\Delta t,k} 1) + O\big((1-\frac{\epsilon}{\gamma})^n\big). 
\end{align} 
Combining Lemma \ref{eig-estimate}, we conclude that 
\begin{align}
\lim_{N\to\infty}(M\Delta t)^{-1}\sum_{i=1}^M \log( e_{k,n}^N) &= (M\Delta t)^{-1} \sum_{k=0}^{M-1} \log(e_k) + O\big((1-\frac{\epsilon}{\gamma})^n\big) \nonumber \\
&= \mu_{\Delta t}(\lambda) + O\big((1-\frac{\epsilon}{\gamma})^n\big). 
\end{align} 
From Theorem \ref{Errorestimate2}, we know that $\big|\mu(\lambda)-\mu_{\Delta t}(\lambda)\big|={O}\big((\Delta t)^{\frac{1}{2}}\big)$. Therefore, the estimate in \eqref{mainresult-NIPSPincipaleig} can be obtained by using the triangle inequality.
\end{proof}

\section{Numerical results}\label{sec:NumericalResults}
\noindent
In this section, we first present numerical examples to verify the convergence analysis of the proposed method in computing eigenvalues. Then, we compute the KPP front speeds in 2D and 3D chaotic flows. In addition, we investigate the dependence of the KPP front speed on the magnitude of velocity fields and the evolution of the empirical distribution of the $N$-IPS. To be consistent with the setting of numerical experiments in the literature, e.g., \cite{shen2013finite2d,shen2013finite3d}, we choose the torus space $\mathbb{T}^d=[0,2\pi]^d$, $d=2,3$.  
\subsection{Convergence tests in computing principal eigenvalue}\label{sec:NormConvergeTest}
\noindent 
We first verify the convergence of the operator splitting method in approximating solution operator. Let $\textbf{x}=(x_1,x_2)^{T}$. We consider a two-dimensional non-autonomous equation on $[0,2\pi]^2$ as follows:
\begin{equation}
u_t = \mathcal{L}(t) u + \mathcal{C}(t)u,
\label{NumSec1E1}
\end{equation}
where  $\mathcal{L}(t) = \Delta_{\textbf{x}} + (\sin(x_2)\cos(2\pi t), \sin(x_1)\cos(2\pi t))\cdot \nabla_{\textbf{x}}$ and $\mathcal{C}(t) = \big(\sin(x_1+x_2)+ \cos(x_1+x_2)\big)\sin(2\pi t)$.

We use spectral method to discretize Eq.\eqref{NumSec1E1}, in order to obtain an accurate approximation in the physical space of the solution operator of Eq.\eqref{NumSec1E1}. Speficially, let $V_{H}=\text{span}\{e^{i(k_1x_1 + k_2 x_2)}: -H\leq k_1,k_2\leq H\}$ denote a finite dimensional space spanned by fourier basis functions, where $H$ is a positive integer. First, we compute the approximations of the 
operators $\mathcal{L}(t)$ and $\mathcal{C}(t)$ in the space $V_{H}$. Let matrices $L^H(t)$ and $C^H(t)\in \mathbb{C}^{(2H+1)^2\times(2H+1)^2}$ denote the approximations of $\mathcal{L}(t)$ and $\mathcal{C}(t)$, respectively \cite{shen2011spectral}.
Then, we use the matrix exponential functions $e^{\Delta t L^H(t)}$ and $e^{\Delta t C^H(t)}$ to approximate $e^{\Delta t \mathcal{L}(t)}$ and $e^{\Delta t \mathcal{C}(t)}$, respectively. Thus, we get an approximation formula for $\mathcal{K}^{\Delta t}$ as
\begin{equation} 
K^{H,\Delta t} = \prod_{j = 0}^{T/\Delta t -1}e^{\Delta t L^H(t_j)}e^{\Delta t M^H(t_j)}.
\label{spectrumapprox}
\end{equation}
For the reference solution, we choose a much finer time step $\Delta t_{ref}$ and compute the approximation formula
\begin{equation}
\widetilde K^{H,\Delta t_{ref}} = \prod_{j=0}^{T/\Delta t_{ref}-1} e^{\Delta t_{ref}(L^H(t_j)+M^H(t_j))}. 
\label{spectrumapproxref}
\end{equation}
 
In this experiment, we choose $H = 24$, $\Delta t = 2^{-1}, 2^{-2},\cdots, 2^{-9}$, and $\Delta t_{ref} = 2^{-12}$. Then, we compute $||K^{H,\Delta t} - \widetilde K^{H,\Delta t_{ref}}||_{L^2}$ to verify our result.
Figure \ref{splittingerror} shows the convergence results for the splitting method. The convergence rate is $(\Delta t)^{1.05}$. This numerical result suggests that the convergence analysis in Theorem \ref{Errorestimate2} is not sharp. More studies on the convergence analysis of our method will be reported in our future work. 
 
\begin{figure}[tbph]
	\centering
	\includegraphics[width=0.5\linewidth]{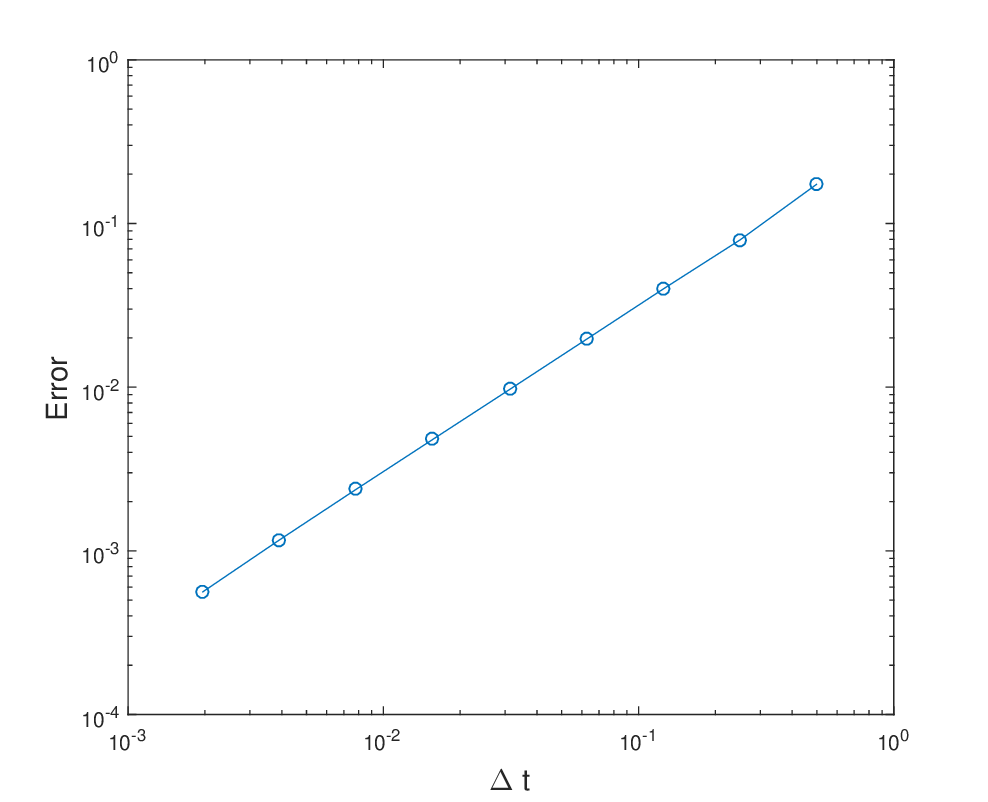}
	\caption{Numerical errors for $||K^{H,\Delta t} - \widetilde K^{H,\Delta t_{ref}}||_{L^2}$.}
	\label{splittingerror}
\end{figure}
Then, we test the convergence of the Lagrangian method, i.e., Algorithm \ref{Algorithm-TimeFependent}, in computing principal eigenvalues of parabolic-type equations. 
We still consider the problem \eqref{NumSec1E1} with the same $\mathcal{L}(t)$ and $\mathcal{C}(t)$.
In this experiment, we choose $\Delta t = 2^{-1},2^{-2},2^{-3},2^{-4},2^{-5}$,  $N = 200,000$ in the $N$-IPS system, and iteration number $n = 200$ and $n=400$ in the Feynman-Kac semigroup iteration method. Figure \ref{fig:proberror} shows the convergence of principal eigenvalues with respect to $\Delta t$ by spectral method and our Lagrangian method, where the reference solution is computed from spectral method with a finer grid $\Delta t_{ref} = 2 ^{-10}$. So given sufficient large $N$ and $n$, the error in calculating principal eigenvalues of linearized KPP operator $\mathcal{A}$ via our proposed Lagrangian approach only comes from the error of operator splitting. Also as the Lagrangian method will eventually converge to some invariant measure approximating the ground truth invariant measure, there is no error accumulation for long-time integration.

\begin{figure}[tbph]
	\centering
	\includegraphics[width=0.5\linewidth]{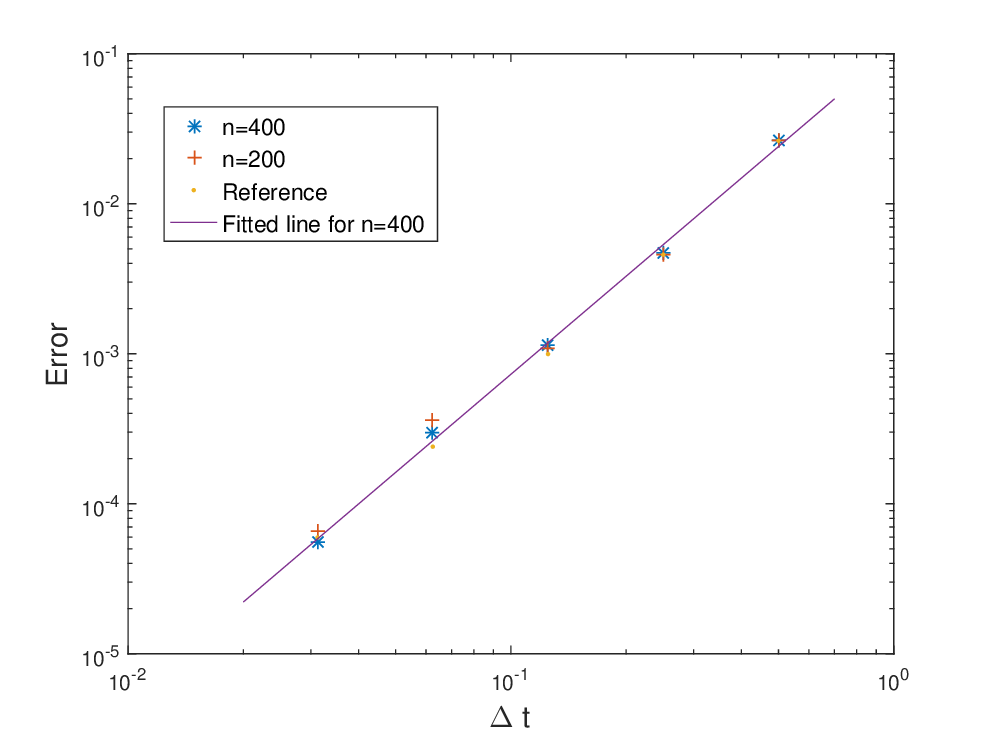}
	\caption{In the Lagrangian method, iteration number $n=200$ and $n=400$. The reference solution is obtained by the spectral method.}
	\label{fig:proberror}
\end{figure}

\subsection{Computing KPP front speeds in different flows}\label{sec:KPPFrontSpeedsFlows}
\noindent
We first compute the KPP front speeds in two different time-independent flows, i.e., a 2D steady cellular flow
and a 3D ABC flow. Let $\textbf{x}=(x_1,...,x_d)^{T}\in [0,2\pi]^d$ with $d=2,3$. We use the Lagrangian method to compute the following principal eigenvalue problem  with periodic boundary condition 
\begin{equation}
\begin{aligned}
&\kappa \Delta_{\textbf{x}} \Phi + (2\kappa {\lambda} {\bf e + v})\cdot \nabla_{\textbf{x}} \Phi + \big(\kappa {\lambda}^2 + {\lambda}{\bf e\cdot v} + \tau^{-1} f'(0) \big)\Phi = \mu(\lambda) \Phi, 
\end{aligned}
\label{KPPspeed2Dsteadyflow}
\end{equation}
 $f(u)=u(1-u)$, and $(\mu(\lambda),\Phi)$ are principal eigenvalue of \eqref{KPPspeed2Dsteadyflow} and its associated eigenfunction, respectively. The velocity field ${\bf v} = (- \sin x_1 \cos x_2, \cos x_1 \sin x_2)$ in the 2D steady cellular flow and ${\bf v} = (\sin x_3 + \cos x_2, \sin x_1 + \cos x_3, \sin x_2 + \cos x_1)$ in the 3D ABC flow, respectively.  

We choose the parameters $\kappa = 1$ and $\tau = 1$ in \eqref{KPPspeed2Dsteadyflow}. We use the spectral method to obtain an accurate reference solution for the principal eigenvalue of \eqref{KPPspeed2Dsteadyflow}.    
Figure \ref{fig:detconvergence} shows the convergence results of the Lagrangian method in computing the principal eigenvalue, where $\lambda = 0.35$ for the 2D cellular flow and $\lambda = 0.55$ for the 3D ABC flow.
We find the convergence rate of the Lagrangian method is $(\Delta t)^{1.51}$ for the 2D steady cellular flow,
and $(\Delta t)^{1.70}$ for the 3D ABC flow. Thus, we can use the Lagrangian method to compute the KPP front speeds in both 2D and 3D flows. 
 
\begin{figure}[tbph]
	\centering
	\begin{subfigure}{0.45\textwidth}
		\includegraphics[width = \linewidth]{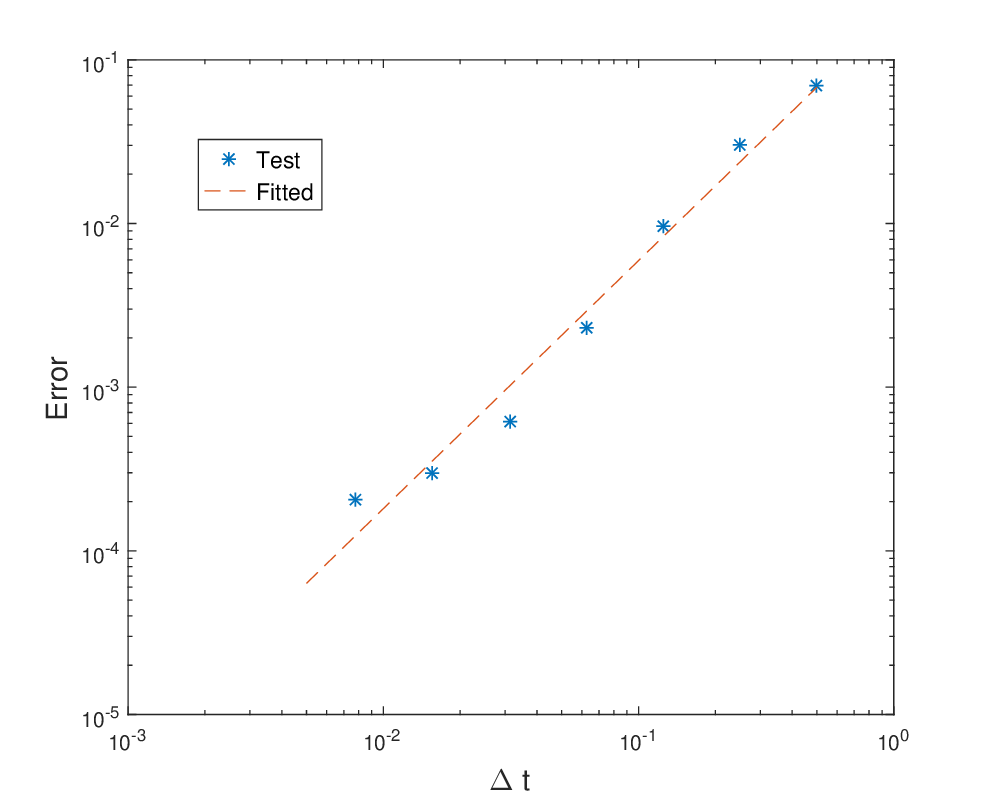}
		\caption{2D convergence test, fitted slope $\approx$ 1.51}
		\label{convergece-dt-2D} 
	\end{subfigure}
	\begin{subfigure}{0.45\textwidth}
		\includegraphics[width = \linewidth]{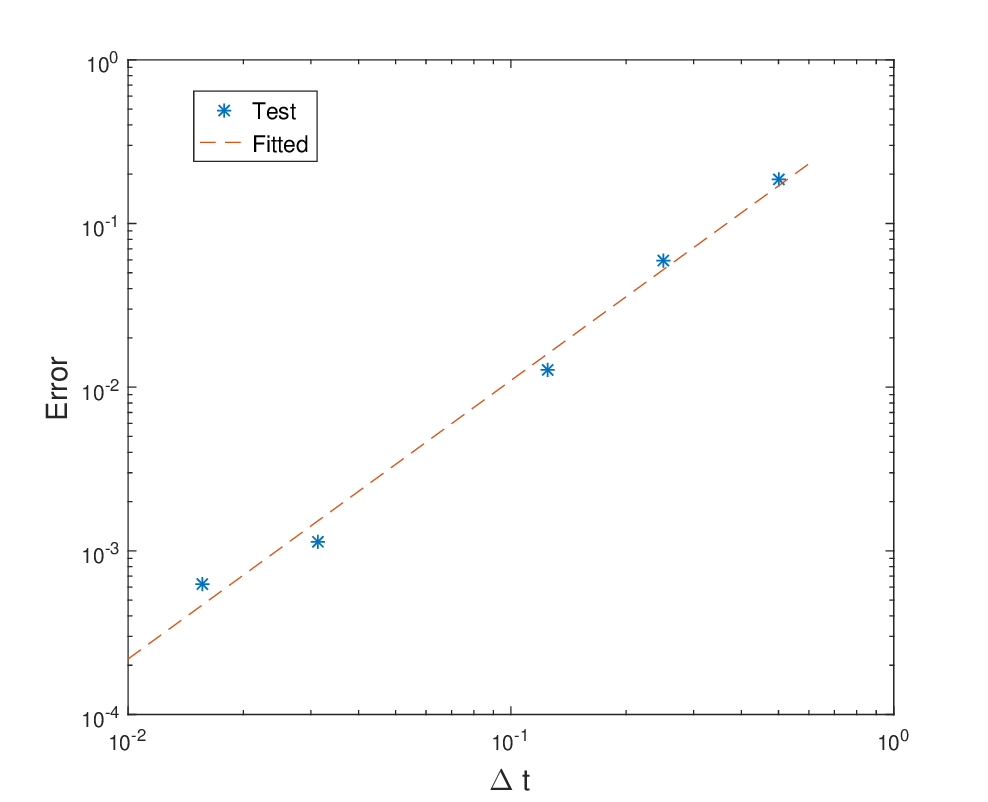}
		\caption{3D convergence test, fitted slope $\approx$ 1.60.}
		\label{convergece-dt-3D}
	\end{subfigure}
	\caption{Errors of the principal eigenvalue computed by using different time steps.}
	\label{fig:detconvergence}
\end{figure}   

After getting the principal eigenvalue, we compute the KPP front speed $c^*$ through the formula $c^* = \inf_{{\lambda} > 0} \frac{\mu(\lambda)}{\lambda}$.
We only show the numerical results for the 3D ABC flow here since the results for the 2D steady cellular flow is quantitatively similar. We choose the velocity field  ${\bf v} = A(\sin x_3 + \cos x_2, \sin x_1 + \cos x_3, \sin x_2 + \cos x_1)$, where $A$ is the strength of the convection.  In Figure \ref{fig:difA}, we show the results of $\frac{\mu(\lambda)}{\lambda}$ for ABC flows with $A = 1$ and $A = 10$. The amplitude of the principal eigenvalue increases fast and the convergence speed becomes slower. Notice that in this case, the flow becomes very unstable since the convection becomes dominant comparing to the diffusion. This issue will be studied in subsection \ref{sec:NumCompKPPspead}.


\begin{figure}[tbph]
	\centering
	\begin{subfigure}{0.45\textwidth}
		\includegraphics[width = \linewidth]{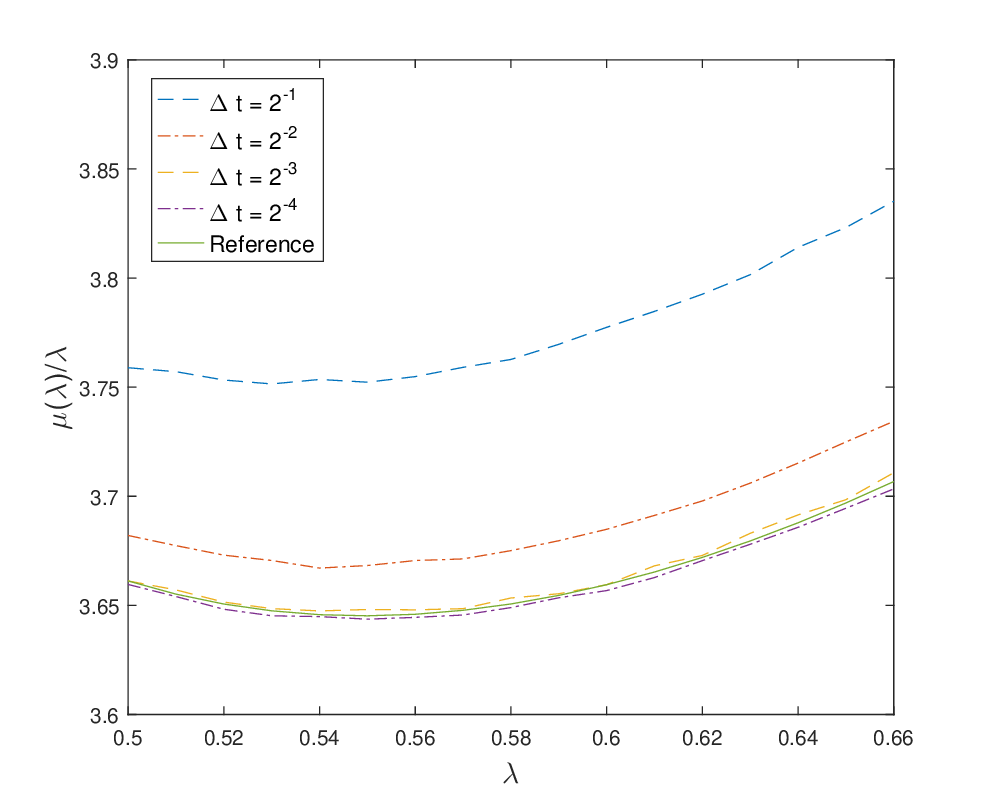}
		\caption{A = 1 }
		\label{convergece-dt-2D} 
	\end{subfigure}
	\begin{subfigure}{0.45\textwidth}
		\includegraphics[width = \linewidth]{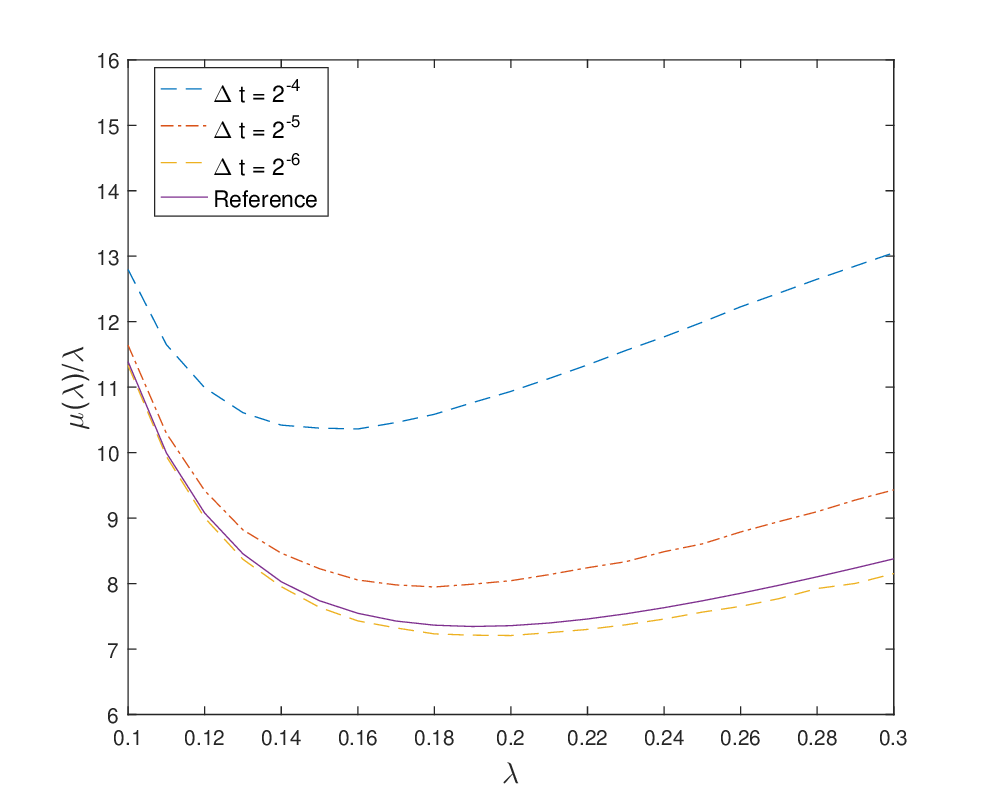}
		\caption{ A = 10 }
		\label{convergece-dt-3D}
	\end{subfigure}
	\caption{Numerical results of $\frac{\mu({\lambda})}{{\lambda}}$ for different $\lambda$'s in the ABC flow.}
	\label{fig:difA}
\end{figure}

Next, we compute the KPP front speed in a 2D unsteady (time-dependent) cellular flow. Let $\textbf{x}=(x_1,x_2)^{T}$. We use the Lagrangian method to compute the following principal eigenvalue problem  with periodic boundary condition 
\begin{equation}
\begin{aligned}
& \kappa \Delta_{\textbf{x}} \Phi + (2\kappa \lambda {\bf e} + {\bf v})\cdot \nabla_{\textbf{x}} \Phi + \big(\kappa {\lambda}^2 + {\lambda}{\bf e\cdot v} + \tau^{-1} f'(0) \big)\Phi-\Phi_{t} = \mu(\lambda) \Phi,  
\end{aligned}
\label{KPPspeed2Dunsteadyflow}
\end{equation}
where $ (t,\textbf{x})\in [0,T]\times [0,2\pi]^2$, $T$ is the period of $\textbf{v}$ in $t$,  $f(u)=u(1-u)$, and $(\mu(\lambda),\Phi)$ are principal eigenvalue of \eqref{KPPspeed2Dunsteadyflow} and its associated eigenfunction, respectively. The velocity field of the 2D unsteady cellular flow is ${\bf v} = \big(-\sin x_1 \cos x_2 (1+\delta \cos 2\pi t),\cos x_1 \sin x_2 (1+\delta \cos 2\pi t)\big)$, where $\delta>0$ is a parameter.  

We choose the parameters $\kappa = 1$ and $\tau = 1$ in \eqref{KPPspeed2Dunsteadyflow} and $\delta=0.5$ in the velocity field ${\bf v}$. We use the spectral method to obtain an accurate reference solution for the principal eigenvalue of \eqref{KPPspeed2Dunsteadyflow}.  For figure \ref{convergece-dt-2Dmix}, we choose $\lambda = 0.57$.  Figure \ref{convergece-dt-2Dmix} shows the convergence results of the Lagrangian method in computing the principal eigenvalue, where the convergence rate is $(\Delta t)^{1.31}$. Figure \ref{mulambdaoverlambda-2Dmix} shows the 
numerical results of $\frac{\mu(\lambda)}{\lambda}$ for different $\lambda$'s, from which we can compute 
the KPP front speed in the 2D unsteady cellular flow. We can 
see that $\frac{\mu(\lambda)}{\lambda}$ is convex within the computational domain of $\lambda$. Thus,  we can compute the KPP front speed by finding the minimizer of $\frac{\mu(\lambda)}{\lambda}$.

\begin{figure}[tbph]
	\centering
	\begin{subfigure}{0.45\textwidth}
		\includegraphics[width =  \linewidth]{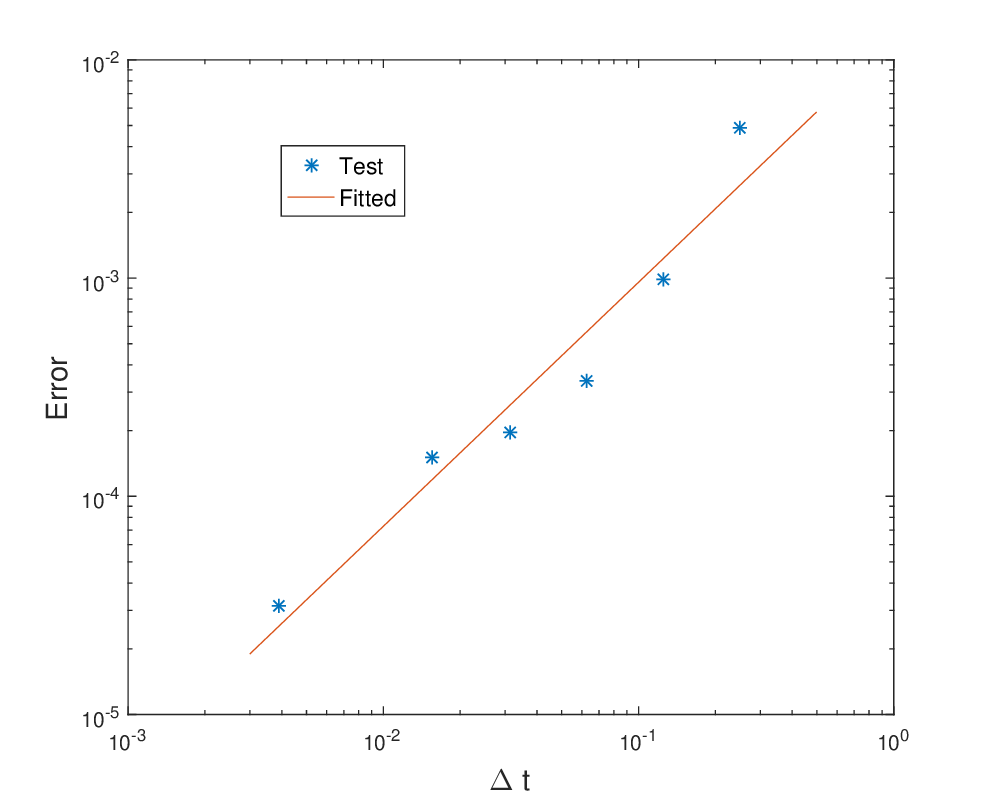}
		\caption{Convergence test for different $\Delta t$'s. The fitted slope is $\approx1.31$.}
		\label{convergece-dt-2Dmix}
	\end{subfigure}
	\begin{subfigure}{0.45\textwidth}
		\includegraphics[width =  \linewidth]{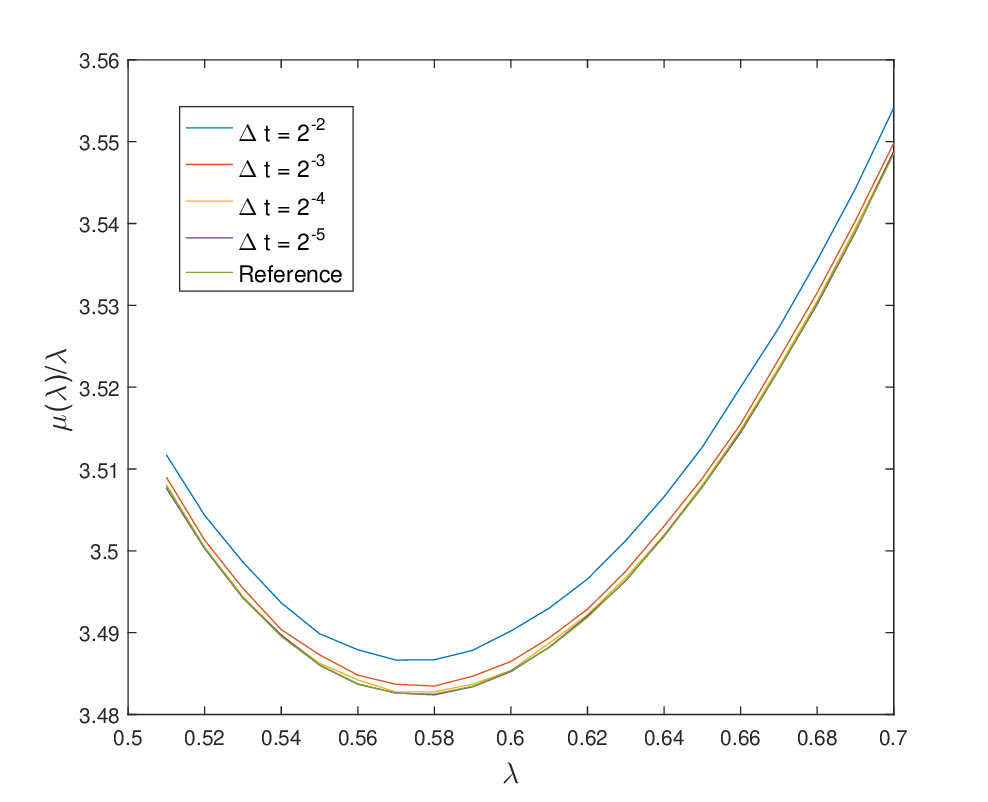}
		\caption{Numerical results of $\frac{\mu({\lambda})}{{\lambda}}$ for different $\lambda$'s.}
		\label{mulambdaoverlambda-2Dmix}
	\end{subfigure}
	\caption{Numerical results for a 2D unsteady cellular flow. }
	\label{fig:numericalresult2Dmixflow}
\end{figure}

\subsection{Investigate the dependence of front speed on the strength of the flows}\label{sec:NumCompKPPspead}
\noindent
To further test the performance of the Lagrangian method, we study the dependence of the KPP front speeds 
on the strength of different flows. Moreover, we study the relationship between the KPP front speeds in the chaotic flows and the effective diffusivity of the passive tracer model in the same chaotic flows. We refer the interested reader to \cite{WangXinZhang:18,Zhongjian2018sharp,lyu2019convergence} for the recent development in computing effective diffusivities in chaotic and random flows. We set the diffusion constant $\kappa=1$ and the time scale of reaction rate $\tau=1$.

Let us first consider this issue in KPP front speeds of time-independent flows. If we scale ${\bf v}\rightarrow A{\bf v}$, Eq.\eqref{KPPspeed2Dsteadyflow} can be rewritten as the following form 
\begin{equation}
\begin{aligned}
&  \Delta_{\textbf{x}} \Phi + (2  \lambda {\bf e }+A{\bf v})\cdot \nabla_{\textbf{x}} \Phi + \big(  {\lambda}^2 + {\lambda}{\bf e}\cdot A{\bf v} +  f'(0) \big)\Phi = \mu(\lambda) \Phi. 
\end{aligned}
\label{KPPspeed2DsteadyflowLargeA}
\end{equation}
The KPP front speed is $c^* = \inf_{{\lambda} > 0} \frac{\mu(\lambda)}{{\lambda}}$. Notice that the KPP front speed $c^*$ depends on $A$, i.e., $c^* = c^*(A)$.  
Therefore, we consider the equivalent equation
\begin{equation}
\begin{aligned}
& A^{-1}\Delta_{\textbf{x}} \Phi + (2  A^{-1}\lambda {\bf e }+{\bf v})\cdot \nabla_{\textbf{x}} \Phi + \big(  A^{-1}{\lambda}^2 + {\lambda}{\bf e}\cdot {\bf v} +  A^{-1}f'(0) \big)\Phi = \widetilde\mu(\lambda) \Phi,
\end{aligned}
\label{KPPspeed2DsteadyflowLargeArescale}
\end{equation}
where $\widetilde\mu(\lambda)=A^{-1}\mu(\lambda)$. Let $\widetilde c^*$ denote the KPP front speed of the rescaled equation \eqref{KPPspeed2DsteadyflowLargeArescale}. We have that 
\begin{equation} 
\widetilde c^* = \inf_{\lambda > 0} \frac{\widetilde\mu(\lambda)}{\lambda} = \frac{c^*}{A}.
\label{cstar}
\end{equation}
We denote $\sigma = A^{-1}$. For the 2D steady cellular flow ${\bf v} = (-\sin x_1\cos x_2,\cos x_1\sin x_2)$, it has been proved that $c^*(A) = O(A^{1/4})$ \cite{audoly2000reaction,novikov2007boundary}. 
Let $D^{E}(A)$ denote the effective diffusivity corresponding to the passive tracer model in the same 2D steady cellular flow ${\bf v}$. It has been proved by a boundary layer analysis that $D^{E}(A)=O(A^{1/2})$ in 
\cite{audoly2000reaction,childress1979alpha}. \textcolor{black}{By scaling analysis, we obtain that for the 2D steady cellular flow the following result holds}
\begin{equation} 
 c^*(A)=O(\sqrt{D^{E}(A)}). \label{KPPspeed-EffectiveD} 
\end{equation}
\textcolor{black}{To the best of our knowledge, the above relationship between the KPP front speeds and the effective diffusivity was only proved in 2D steady cellular flows; see \cite{novikov2007boundary,ryzhik2007kpp}.
The result \eqref{KPPspeed-EffectiveD} implies that $\widetilde c^*(\sigma) = \sigma O(\sigma ^{-1/4}) = O(\sigma ^{3/4})$, which provides a theoretical guidence for our numerical experiments. Figure \ref{cstaroversigma-2Dcellular} shows the numerical results of $\widetilde c^*(\sigma)$ in the 2D steady cellular flow obtained by our method. From the numerical results, we compute regression and obtain $\widetilde c^*(\sigma) = O(\sigma ^{0.74})$, 
which agrees with the theoretical result \eqref{KPPspeed-EffectiveD}.} 

\textcolor{black}{For other flows, such as unsteady flows and 3D chaotic flows, the understanding of $c^*(A)$ for large $A$'s (or  $\widetilde c^*(\sigma)$ for small $\sigma$'s) remains open. We will study these flows here.  
In our previous work \cite{Zhongjian2018sharp}, we computed the effective diffusivity of the passive tracer model in the 3D Kolmogorov flow, where ${\bf v} = (\sin x_1, \sin x_2,\sin x_3)$, and obtained that $D^{E}(A)=O(A^{1.13})$. Notice that in \cite{Zhongjian2018sharp} the effective diffusivity is represented in terms of the diffusion and we have converted the result in terms of the strength of the flows here, which are equivalent. The result \eqref{KPPspeed-EffectiveD} implies that $\widetilde c^*(\sigma) = \sigma O(\sigma ^{-0.56}) = O(\sigma ^{0.44})$. Using our method, we compute $\widetilde c^*(\sigma)$ for $\sigma$ in 3D Kolmogorov flow and show the numerical results in Figure \ref{cstaroversigma-3Dkpp}. We obtain that $\widetilde c^*(\sigma) = O(\sigma ^{0.43})$, which means that the result \eqref{KPPspeed-EffectiveD} also holds in the 3D Kolmogorov flow. We conjecture that the result \eqref{KPPspeed-EffectiveD} also holds true in other 3D chaotic flows. We will study this issue in future works. }  


\begin{figure}[tbph]
	\centering
	\begin{subfigure}{0.45\textwidth} \label{2dsteady}
		\includegraphics[width = \linewidth]{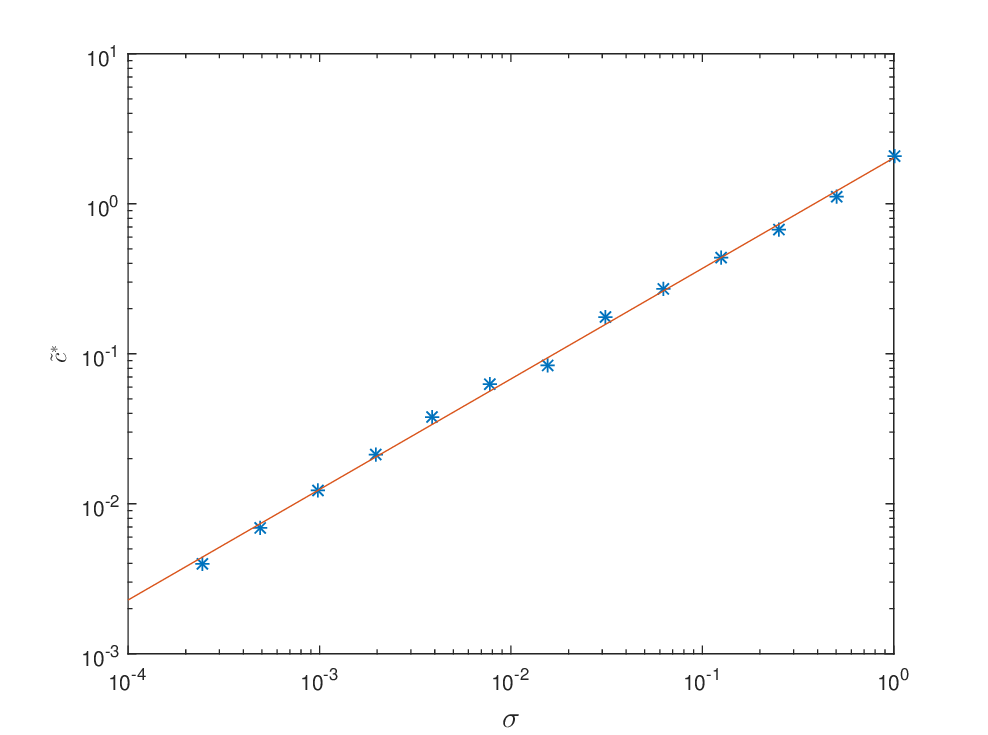}
		\caption{Numerical results of $\widetilde c^*(\sigma)$ in 2D cellular flow. The fitted slope is $\approx 0.74$.}
		\label{cstaroversigma-2Dcellular}
	\end{subfigure}
	\begin{subfigure}{0.45\textwidth} \label{3dksteady}
		\includegraphics[width = \linewidth]{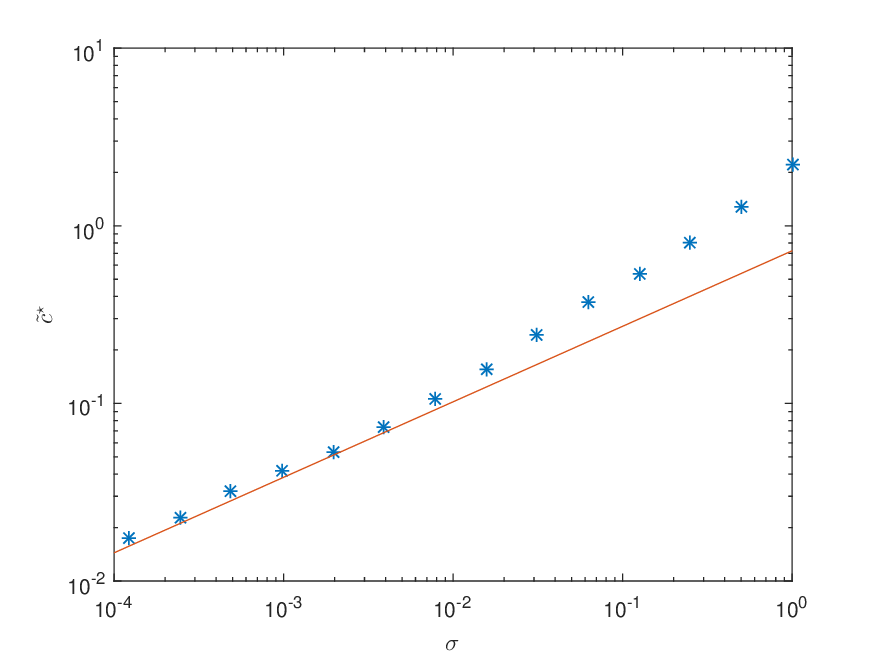}
		\caption{Numerical results of $\widetilde c^*(\sigma)$ in 3D Kolmogorov flow. The fitted slope is $\approx 0.43$.}
		\label{cstaroversigma-3Dkpp}
	\end{subfigure}
	\caption{Numerical results of $\widetilde c^*(\sigma)$ in different flows.}
	\label{fig:cstaroversigma-steadyflows}
\end{figure}

Next, we study the dependence of the KPP front speeds on the strength of time-dependent flows.
Specifically, we will consider two 3D flows. 
The first one is a time-dependent Kolmogorov flow with  ${\bf v} = \big(\sin(x_3+\theta \sin(2\pi t)),\sin(x_1+\theta \sin(2\pi t)), \sin(x_2+\theta \sin(2\pi t))\big)$, and the second one is a time-dependent ABC flow with ${\bf v} = \big(\sin(x_3+ \sin(2\pi \Omega t))+ \cos(x_2+ \sin(2\pi \Omega t)),\sin(x_1+\sin(2\pi \Omega t))+\cos(x_3+\sin(2\pi \Omega t)) , \sin(x_2+\sin(2\pi \Omega t))+\cos(x_1+\sin(2\pi \Omega t))\big)$.


\textcolor{black}{For the 3D time-dependent Kolmogorov flow, we choose iteration time $n=256$, time step $\Delta t=2^{-9}$ and particle number $N=400,000$.  Figure \ref{cstaroversigma-3DTDKflow} shows the result of $\widetilde c^*(\sigma)$ for small $\sigma$'s and different $\theta$'s. Again, we find the KPP front speed $\widetilde c^*(\sigma)$ is not very sensitive to the paramater $\theta$. When $\theta=1$, we obtain that $\widetilde c^*(\sigma) = O(\sigma ^{0.39})$.}




\begin{figure}[h]
	\centering
	\includegraphics[width=0.5\linewidth]{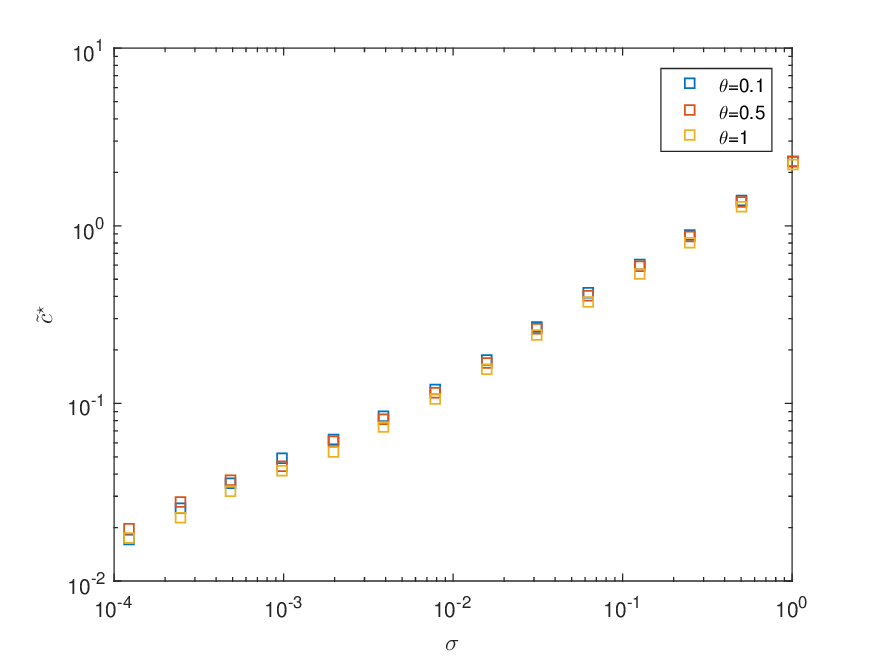}
	\caption{Numerical results of $\widetilde c^*(\sigma)$ in a 3D time-dependent Kolmogorov flow.}
	\label{cstaroversigma-3DTDKflow}
\end{figure}




In Figure \ref{fig:3dkfitting}, we plot out procedure searching for the $\lambda$ when the minimal in Eq.\eqref{cstar} was reached. We use $a\lambda + b\lambda^{-1} + c$ to fit a curve, then find the minimum of the curve. When $\sigma$ is large, the relative fluctuation is small and the minimum is easily to be found. When $\sigma$ is small, the relative fluctuation becomes strong enough, so we decide to fit the curve, then find the minimum point.

\begin{figure}[h]
	\centering
	\begin{subfigure}{0.30\textwidth}
		\includegraphics[width=\linewidth]{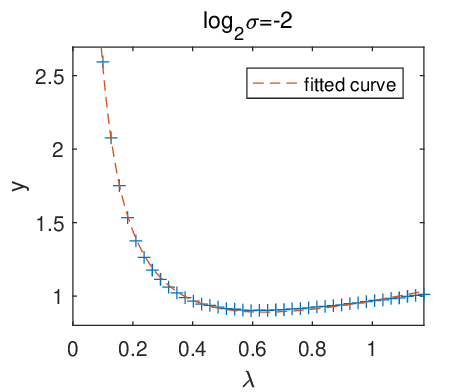}
	\end{subfigure}
	\begin{subfigure}{0.30\textwidth}
		\includegraphics[width=\linewidth]{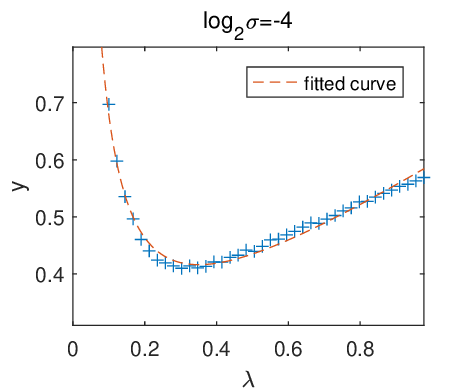}
	\end{subfigure}
	\begin{subfigure}{0.30\textwidth}
		\includegraphics[width=\linewidth]{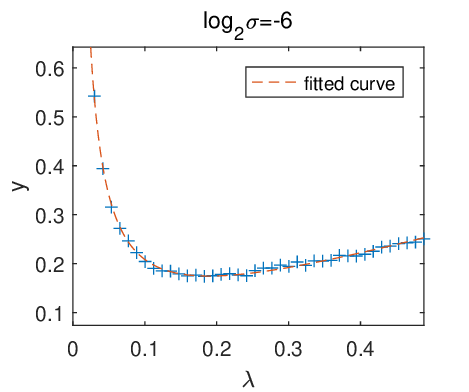}
	\end{subfigure}\\
	\begin{subfigure}{0.30\textwidth}
		\includegraphics[width=\linewidth]{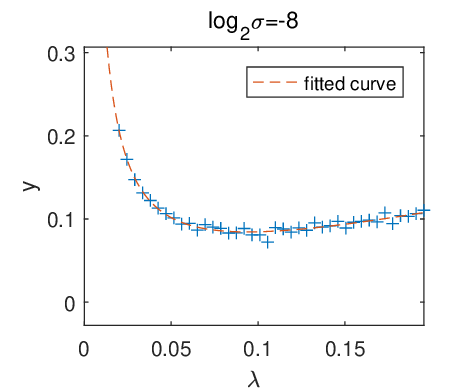}
	\end{subfigure}
	\begin{subfigure}{0.30\textwidth}
		\includegraphics[width=\linewidth]{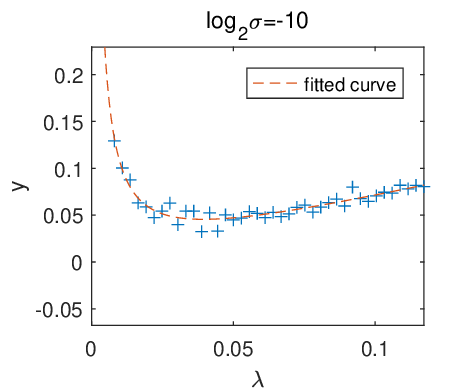}
	\end{subfigure}
	\begin{subfigure}{0.30\textwidth}
		\includegraphics[width=\linewidth]{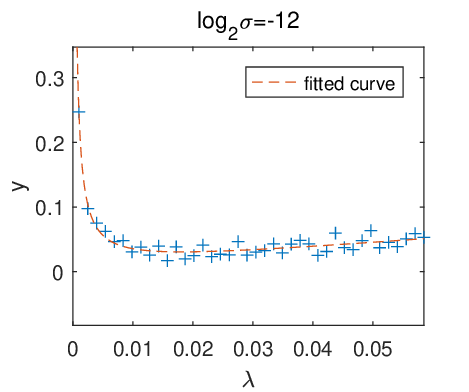}
	\end{subfigure}
	\caption{Numerical results of $\frac{\mu({\lambda})}{{\lambda}}$ for different $\lambda$'s and $\sigma$'s in the 3D time-dependent Kolmogorov flow. The red dash curve is fitted by $a\lambda+b\lambda^{-1}+c$.}
	\label{fig:3dkfitting}
\end{figure}

\textcolor{black}{For the 3D time-dependent ABC flow, we choose the iteration time $n=2048$ (since the ABC flow is more chaotic), time step $\Delta t=2^{-9}$, and particle number $N=400,000$. Figure \ref{fig:cstaroversigma3DTDABCflow} shows the KPP front speeds $\widetilde c^*(\sigma)$ for different $\Omega$'s, where $\Omega$ ranges from $2^{-7}$ to $2^0$. 
Figure \ref{fig:ABCLyapunovExponent} shows the slope of each approximation line for each $\Omega$ in Figure \ref{fig:cstaroversigma3DTDABCflow}. If we assume $\widetilde c^*(\sigma)=O(\sigma ^{\alpha})$ is true, the slope values in  Figure \ref{fig:ABCLyapunovExponent} give the power value $\alpha$'s for different $\Omega$'s.  We find that when $\Omega$ is near $0.1$, the power value $\alpha$ is large. When $\Omega$ is away from $0.1$, say $\Omega<2^{-4}$ or $\Omega>2^{-2}$, the power value $\alpha$ is small. A similar sensitive dependence on the frequency of time-dependent ABC flows was reported in \cite{brummell2001linear}, where the Lyapunov exponent of the deterministic time-dependent ABC flow problem (i.e., $\Omega=0$) was studied as the indicator of the extent of chaos; see Figure 2 and Figure 3 of \cite{brummell2001linear}.}



\begin{figure}[htbp]
	\centering
	\begin{subfigure}{0.45\textwidth} 
		\includegraphics[width = \linewidth]{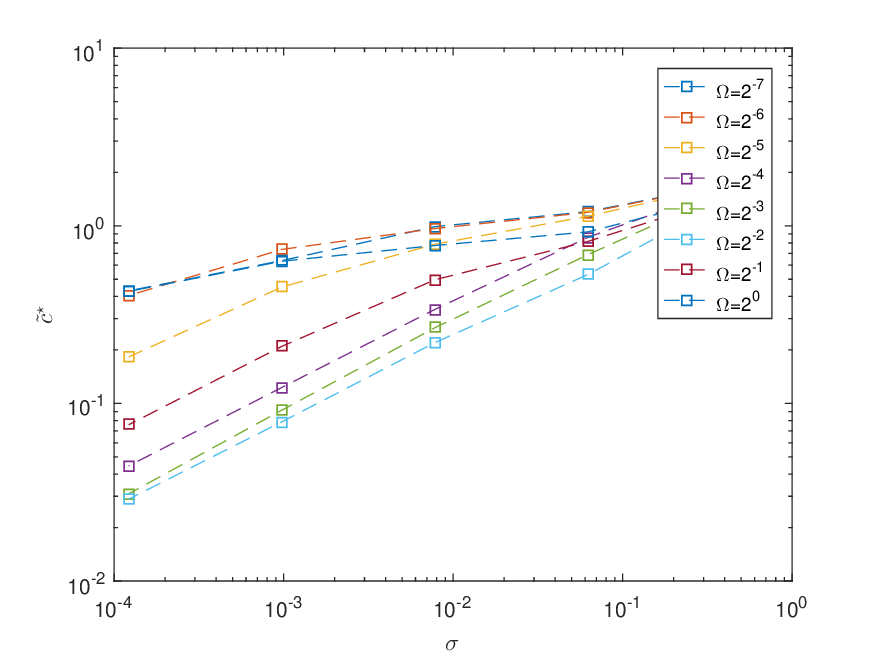}
		\caption{Numerical results of $\widetilde c^*(\sigma)$ for different $\Omega$'s and different $\sigma$'s.}
		\label{fig:cstaroversigma3DTDABCflow}
	\end{subfigure}
	\begin{subfigure}{0.45\textwidth} 
		\includegraphics[width = \linewidth]{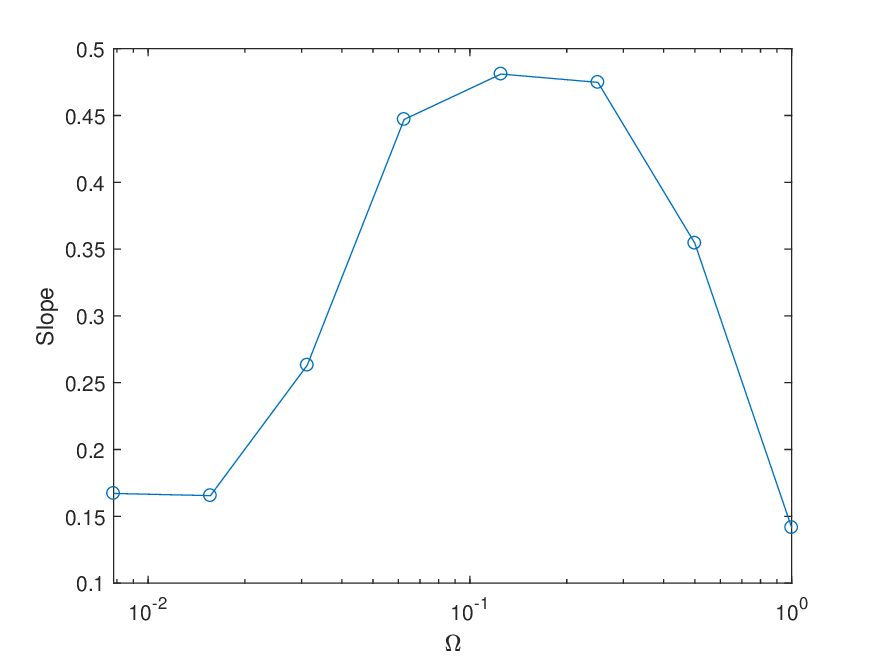}
		\caption{Power values of $\widetilde c^*(\sigma)=O(\sigma ^{\alpha})$ for different $\Omega$'s.}
		\label{fig:ABCLyapunovExponent}
	\end{subfigure}
	\caption{Numerical results for the time-dependent ABC flows.}
	\label{fig:TDABCdifferntparameters}
\end{figure}

\textcolor{black}{We compare the computational time of the interacting particle method and the spectral method in the 2D cellular flow example. The numerical experiments are carried out on the same core of the HPC2015 system at HKU with 10-core Intel Xeon E5-2600 v3(Haswell) processors and 96 GB physical memory. We compute the front speed using the spectral method mentioned in Section \ref{sec:NormConvergeTest}. We set the Fourier modes $H = 2^k$ and $k$ is a positive integer.  When $\sigma = 2^{-2}$, for the spectral method, $H = 2^3$ is enough and it spends $1.13$ seconds to calculate the front speed; while for our interacting particle method, the computational time is about $45.01$ seconds. When $\sigma = 2^{-5}$, for the spectral method, $H = 2^4$ is enough and it spends $42.35$ seconds to calculate the front speed, and the interacting particle method costs $172.76$ seconds. When $\sigma = 2^{-8}$, for the spectral method, $H = 2^4$ is needed and it costs $1203.12$ seconds to calculate the front speed; on the other hand, our interacting particle method costs $676.23$ seconds. When $\sigma$ becomes extremely small, the spectral method becomes very expensive, however, our interacting particle method is still very efficient. For instance, when $\sigma = 2^{-12}$, the spectral method may need several days to calculate the front speed, but our interacting particle method only costs $5378.24$ seconds. We remark that the spectral method becomes very expensive in computing front speeds for 3D chaotic flows. However, the computational time of the interacting particle method only weakly depends on the dimension of the physical space. Thus, we can compute KPP front speeds in 3D chaotic flows.}

\subsection{Evolution of the empirical distribution of the particles}\label{sec:EvolutionParticles}
\noindent
As stated in Theorem \ref{thm:existenceinvariantmeasure}, the empirical distribution converges to the 
invariant measure of Feynman-Kac semigroup as $n$ approaches infinity. Our Lagrangian method can not only calculate the principal eigenvalue but also compute the evolution of the distribution. In this subsection, we study the empirical distribution of the $N$-IPS system moduled to the torus space $\mathbb{T}^d$. We choose the particle number $N=200,000$ in all the numerical experiments. 

 
Figure \ref{steady} shows the invariant distribution generated by the $N$-IPS system in the 2D steady cellular flow, where  
${\bf v} = (- \sin x_1 \cos x_2, \cos x_1 \sin x_2)$. The parameter $\sigma$ varies from $2^0$ to $2^{-5}$.  The strength of the convection is then proportion to $1/\sigma$. We can see that when we increase the strength, 
the invariant measure concentrates in smaller domains and its gradient becomes sharper near these domains, which is a common phenomenon in fluid dynamics. In addition, by comparing to the pattern at the boundary of the plot, one can find that the invariant measure is periodic in physical space. 

\begin{figure}[htbp]
	\centering
	\includegraphics[width=0.9\linewidth]{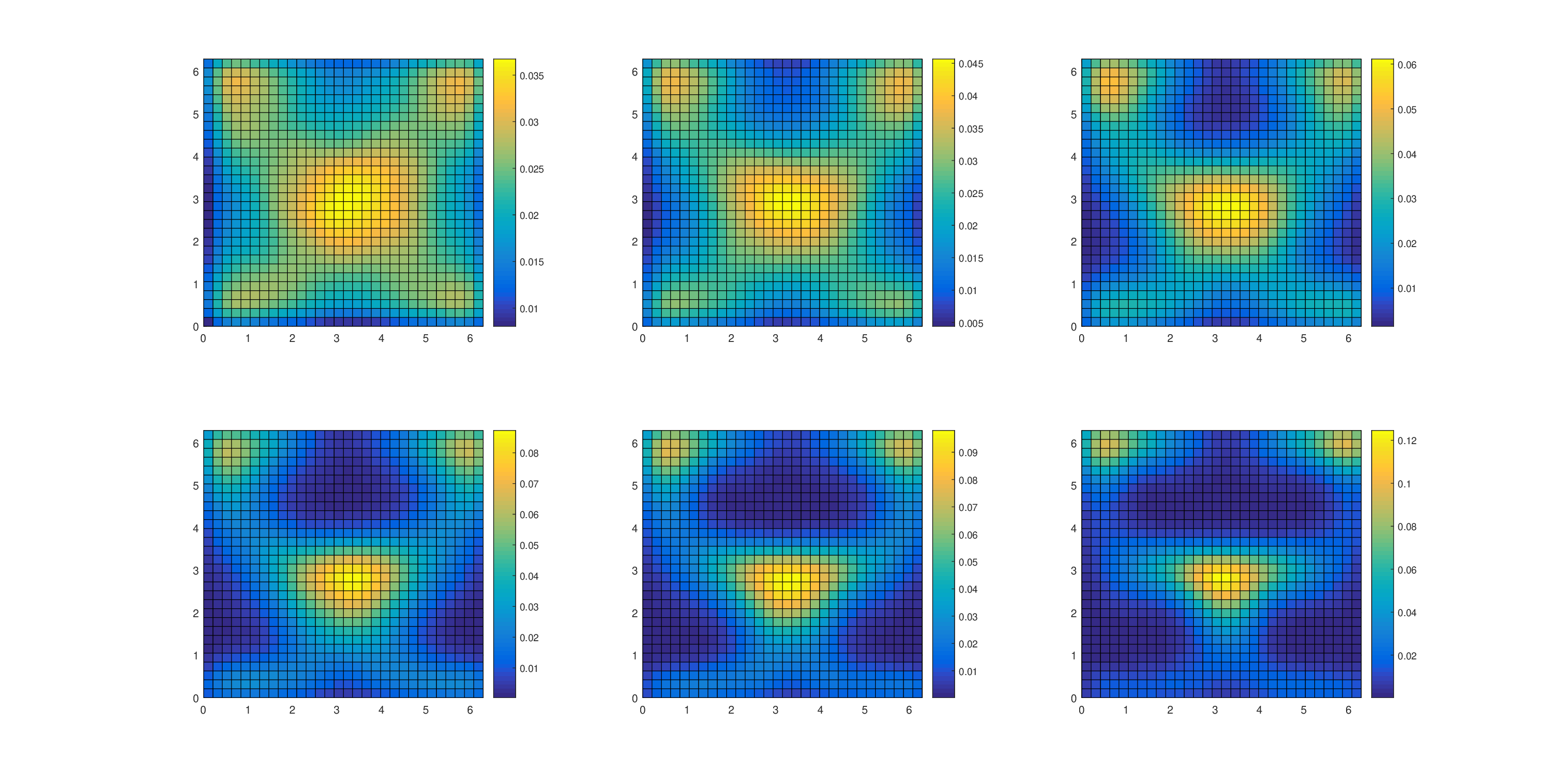}
	\caption{Empirical distributions for the 2D steady cellular flow with $\sigma$ varies from $2^0$ to $2 ^{-5}$. First row from left to right: $\sigma=2^0$, $\sigma=2^{-1}$, and $\sigma=2^{-2}$. Second row from left to right: $\sigma=2^{-3}$, $\sigma=2^{-4}$, and $\sigma=2^{-5}$.}
	\label{steady}
\end{figure} 

Next, we study the evolution of invariant distribution generated by our $N$-IPS system in a 2D time-periodic mixing flow, where ${\bf v} = (-\cos x_2-\theta \sin x_1\cos(2\pi t),\cos x_1+\theta \sin x_2\cos(2\pi t))$. Figure \ref{periodicchange} shows the empirical distribution of the  $N$-IPS system at different times within one period when the iteration time $n=400$. From these numerical results, we can see the invariant distribution varies at different times within one period. And the first subfigure and last subfigure are identical. These results are consistent with our analysis obtained in Lemma \ref{eig-estimate}, where we proved that the invariant measure changes periodically with the same period as the flow. 

\begin{figure}[htbp]
	\centering
	\includegraphics[width = 0.9\linewidth]{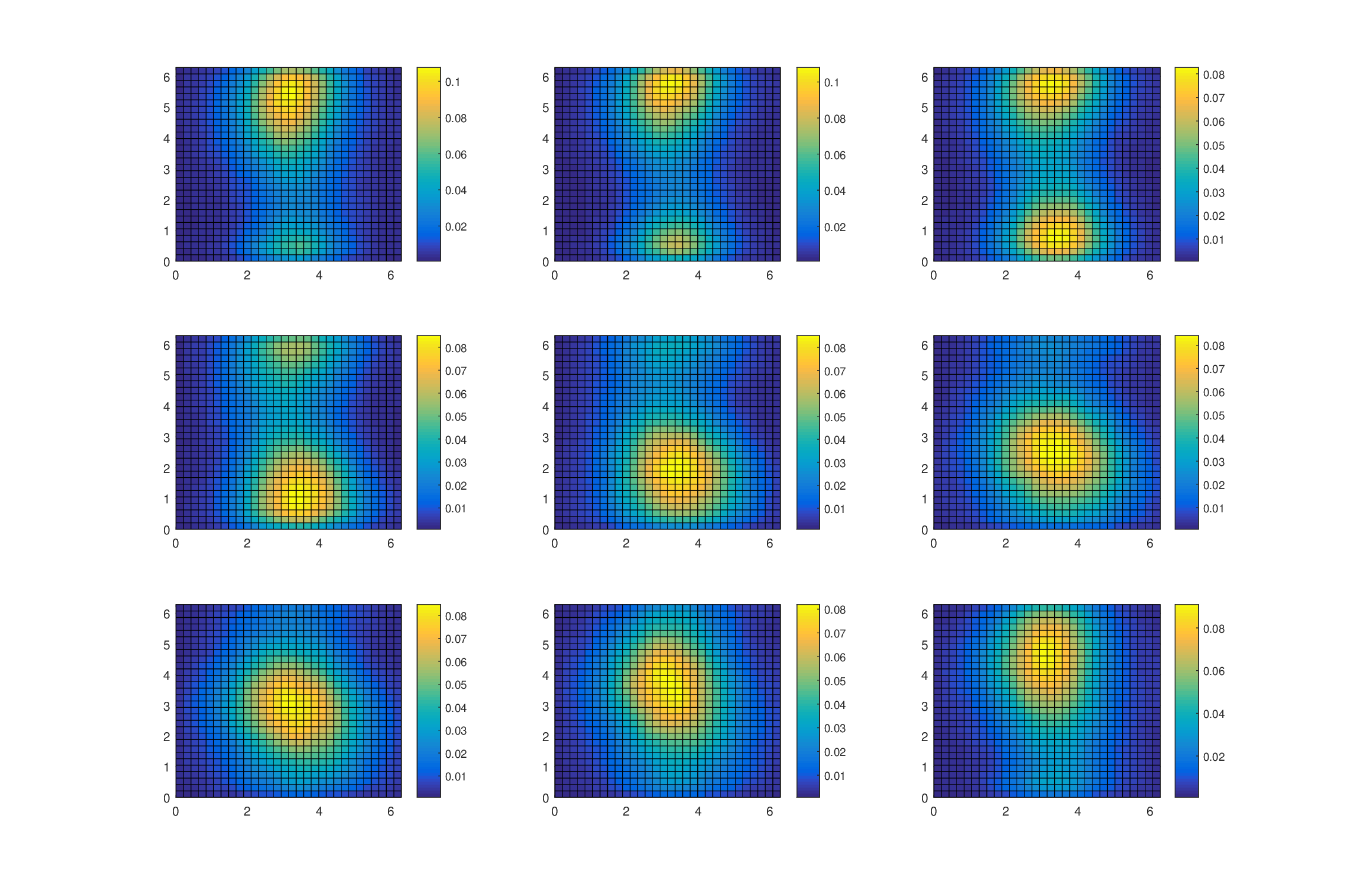}
	\caption{Empirical distributions for the 2D time-periodic mixing flow with $\theta = 1$, $\sigma = 1$, in different phase of one period: $t$ varies from $0$ to $1$ with time interval equal to $1/9$.}
	\label{periodicchange}
\end{figure}

Finally, we let the parameter $\sigma$ vary from $2^0$ to $2^{-5}$ and study the evolution of invariant distribution generated by our $N$-IPS system in the 2D time-periodic mixing flow. Figure \ref{nonsteady} shows that with the increasing of the strength of the convection, the invariant measure becomes compactly supported with a sharp gradient. 

From these numerical results, we get two conclusions. First, the invariant measure of the Feynman-Kac semigroup associated with the KPP operator is no longer uniform distribution. This is due to the effect from the potential function $c(t,\textbf{x})$. Second, the invariant measure converges to a limiting measure as $\sigma\to  0$. Notice that when $\sigma$ is small, the invariant measure develops sharp gradients, which requires more particles to compute. Moreover, it may take more iteration time steps to converges.  Developing effective sampling methods to compute the invariant measure for the KPP operator with small diffusion constant will be studied in our future works.




\begin{figure}[htbp]
	\centering
	\includegraphics[width = 0.9\linewidth]{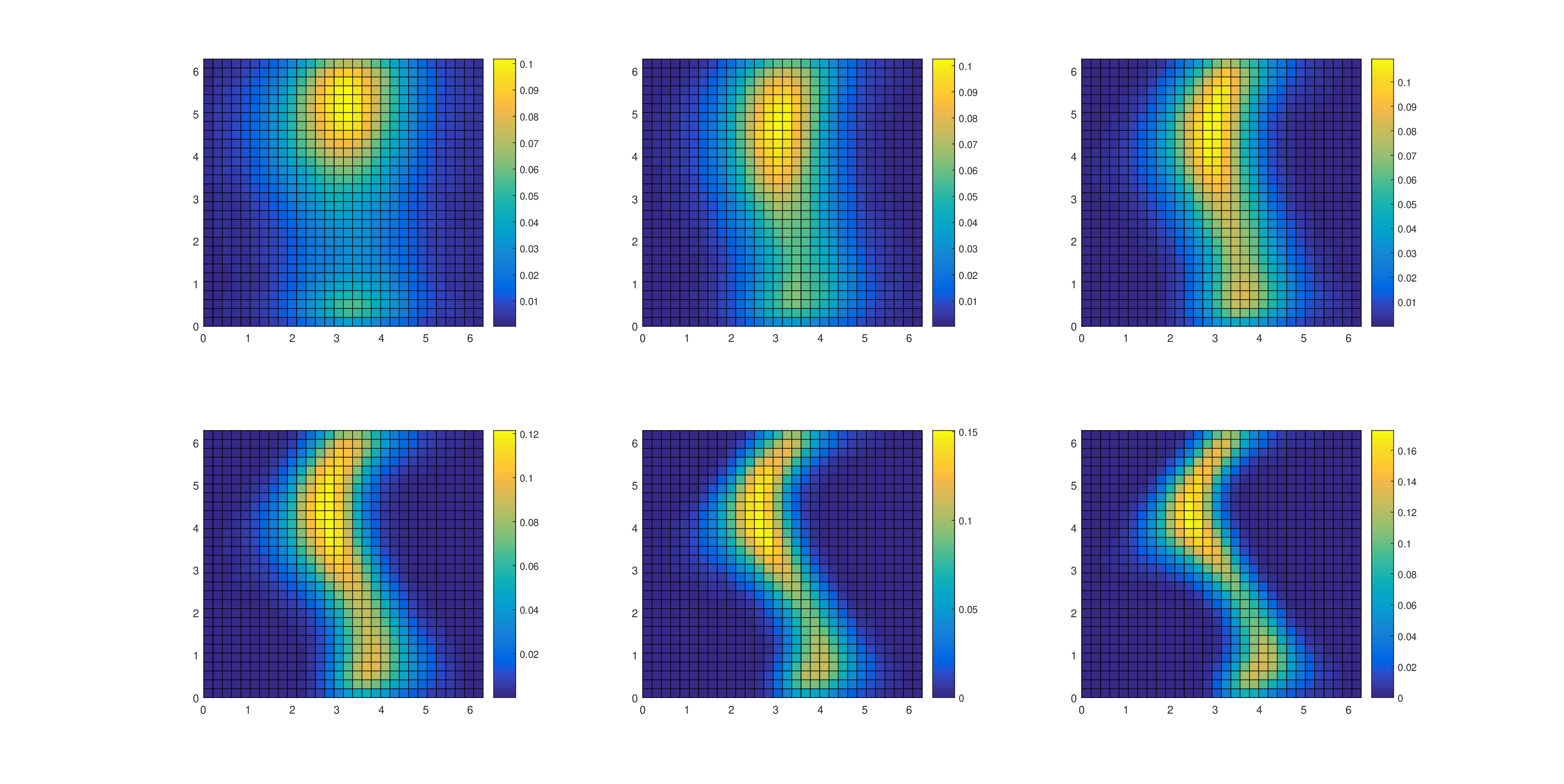}
	\caption{Empirical distributions for the 2D time-periodic mixing flow with $\theta  = 1$, $\sigma$ varies from $2^0$ to $2^{-5}$. First row from left to right: $\sigma=2^0$, $\sigma=2^{-1}$, and $\sigma=2^{-2}$. Second row from left to right: $\sigma=2^{-3}$, $\sigma=2^{-4}$, and $\sigma=2^{-5}$.}
	\label{nonsteady}
\end{figure}

\section{Conclusion}\label{sec:Conclusion}
\noindent 
In this paper, we developed efficient Lagrangian particle methods to compute the KPP front speeds in time-periodic cellular and chaotic flows and provided rigorous convergence analysis for the numerical schemes. In the convergence analysis, we first obtained the error of the operator splitting methods in approximating the solution operator corresponding to the linearized KPP equation. Then, we proved the convergence of the Lagrangian particle method in computing the principal eigenvalue based on the Feynman-Kac semigroup theory. Finally, we presented numerical results to verify the convergence rate of the proposed method for computing the principal eigenvalues. In addition, we computed the KPP front speeds in several typical chaotic flow problems of physical interests, including the Arnold-Beltrami-Childress (ABC) flow and the Kolmogorov flow. It has been proved that the KPP front speed and the effective diffusivity satisfies the relation $c^*(A)=O(\sqrt{D^{E}(A)})$ in 2D cellular flows \cite{novikov2007boundary,ryzhik2007kpp}. We numerically verified this relation and found that this relation
still holds in 3D Kolmogorov flows and ABC flows.

There are three directions we plan to explore in our future work. First, we will extend the
Lagrangian particle method to compute  KPP front speeds in time-stochastic and space-periodic flows. Second, 
we will develop Lagrangian particle methods to compute KPP fronts speeds in more complex fluid flows, where the computational domain is not compact. This type of problem is more challenging both analytically and numerically. As stated in the introduction part, there is limited literature on studying the existence of KPP front speeds in complex flows. In the aspect of numerical computation, our current method cannot be adapted to non-compact domains.  We shall adopt some relaxation techniques to address this problem.  In addition, we shall develop adaptive sampling methods for our Lagrangian particle methods in order to resolve the sharp gradients in the invariant measure when the magnitude of the velocity field is very large.

\section*{Acknowledgement}
\noindent
The research of J. Lyu and Z. Wang is partially supported by the Hong Kong Ph.D. Fellowship Scheme. 
The research of J. Xin is partially supported by NSF grants DMS-1924548 and DMS-1952644. The research of Z. Zhang is supported by Hong Kong RGC grants (Projects 17300817 and 17300318), Seed Funding Programme for Basic Research (HKU), and Basic Research Programme (JCYJ20180307151603959) of The Science, Technology, and Innovation Commission of Shenzhen Municipality. The computations were performed using research computing facilities offered by Information Technology Services, the University of Hong Kong.


\appendix

\section{Error bounds for exponential operator splitting in non-autonomous evolution equations}
\label{Sec:SemigroupTheory}   
\subsection{Euler methods for non-autonomous evolution equations}\label{Sec:Euler}
\noindent
In this section, we review the fundamental rsults for abstract linear evolution equations by semigroup theory; see e.g. \cite{engel1999one,chicone1999evolution} for more details. We consider the non-autonomous Cauchy problem (NCP) as follows
\begin{equation}\label{NCPE-appendix}
\left\{
\begin{aligned}
\frac{d}{dt} u(t) &= \mathcal{A}(t) u(t), \quad t\ge s\in \mathbb{R}\\
u(s) &= x \in X,
\end{aligned}
\right.
\end{equation}
where $X$ is a Banach space and  $(\mathcal{A}(t), \mathcal{D}(\mathcal{A}(t)))_{t\in \mathbb{R}}$ is a family of linear operators on $X$.

\begin{dfn}\label{NCPD}
	A continuous function $u: [s,\infty) \longrightarrow X$ is called a classical solution of \eqref{NCPE-appendix} if $u \in C^1([s,\infty); X), u(t) \in \mathcal{D}(\mathcal{A}(t))$ for all $t\ge s$, $u(s) = x$, and $ \frac{d}{dt} u(t) = \mathcal{A}(t)u(t)$ for all $t\ge s$.
\end{dfn}

\begin{dfn}\label{NCPW}
	For a family $(\mathcal{A}(t), \mathcal{D}(\mathcal{A}(t)))_{t\in \mathbb{R}} $ of linear operators on a Banach space $X$, the NCP \eqref{NCPE-appendix} is well-posed with regularity subspace $(Y_s)_{s\in \mathbb{R}}$ and exponentially bounded solutions, if 
	\begin{enumerate}[(i)]
		\item (Existence) For all $s\in R$ the subspace 
		\begin{equation}
		Y_s = \{y\in X :\text{there exists a classical solution for the NCP \eqref{NCPE-appendix}}\}\subset \mathcal{D}(\mathcal{A}(s))
		\end{equation}
		is dense in X.
		\item (Uniqueness) For every $y\in Y_s$, the solution $u_s(\cdot,y)$ is unique.
		\item (Continuous dependence) The solution continuously depends  on $s$ and $y$, i.e., if $s_n \to s \in \mathbb{R}, ||y_n-y||_X \to  0 $ with $y_n \in Y_{y_n}$, then we have
		$ ||\hat u_{s_n} (t,y_n) - \hat u_{s}(t,y)||_X \to 0$
		uniformly for $t$ in compact subsets of $\mathbb{R}$, where
		$$ \hat u_{s}(t,y) = \left\{\begin{aligned} &u_r(t,y)  &if~ &r \le t,\\&y&if~&r > t.\end{aligned}\right.$$
		\item (Exponential boundedness) There exists a constant $\omega \in \mathbb{R}$ such that 
		$$||u_s(t,y)||_{X} \le e^{\omega(t-s)} ||y||_X$$ 
		for all $y \in Y_s$ and $t \ge s$.
	\end{enumerate}
\end{dfn}
\begin{dfn}
	A family $\{\mathcal{U}(t,s),t\ge s\}$ of linear, bounded solution operators on Banach space $X$ is called an exponentially bounded evolution family if
	\begin{enumerate}[(i)]
		\item $\mathcal{U}(t,r) \mathcal{U}(r,s) = \mathcal{U}(t,s)$ and $\mathcal{U}(t,t)=Id$ hold for all $t\ge r\ge s \in \mathbb{R}$,
		\item the mapping $(t,s)\to \mathcal{U}(t,s)$ is strongly continuous,
		\item $||\mathcal{U}(t,s)||_X \le e^{\omega (t-s)}$ for some $ \omega\in\mathbb{R}$ and all $t\ge s \in \mathbb{R}$.
	\end{enumerate}
\end{dfn}

In contrast to the behavior of $\text{C}_0$-semigroups, the algebraic proposition of an evolution family do not imply any differentiability on a dense subspace. Therefore, we need extra assumptions in order to solve an NCP.
\begin{dfn}
	An evolution family $\{\mathcal{U}(t,s),t\ge s\}$ is called evolution family solving NCP \eqref{NCPE-appendix} if for every $s\in \mathbb{R}$ the regularity space
	$$Y_s = \{y\in X: [s,\infty) \ni t\mapsto \mathcal{U}(t,s)y\text{ solves NCP \eqref{NCPE-appendix}}\}$$
	is dense in X.
\end{dfn}
In this case, the unique classical solution of the NCP \eqref{NCPE-appendix} is given by $u(t) = \mathcal{U}(t,s)x$. The well-posedness of the NCP \eqref{NCPE-appendix} can now be characterized by the existence of solving an evolution family $\{\mathcal{U}(t,s),t\ge s\}$.

\begin{pro}
	Let $X$ be a Banach space and $(\mathcal{A}(t),\mathcal{D}(\mathcal{A}(t)))_{t \in \mathbb{R}}$ be a family of linear operators on $X$. The following assertions are equivalent \cite{engel1999one}.
	\begin{enumerate}[(i)]
		\item The NCP \eqref{NCPE-appendix} is well-posed.
		\item There exits a unique evolution family $\{\mathcal{U}(t,s),t\ge s\}$ solving the NCP \eqref{NCPE-appendix}.
	\end{enumerate}
	In addition, if $||e^{\tau \mathcal{A}(t)}||_X \le e^{\omega \tau}$ for any $\tau \ge 0, t\in \mathbb{R}$, then we have $||\mathcal{U}(t,s)||_X \le e^{\omega (t-s)}$.
\end{pro}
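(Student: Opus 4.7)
The plan is to prove the equivalence (i)$\Leftrightarrow$(ii) in the standard evolution-family style and then derive the final exponential bound from the dissipativity of $\mathcal{A}(t)-\omega I$. Throughout I would write $Y_s$ for either of the two regularity subspaces (from Definition \ref{NCPW} or from the definition of a solving evolution family); the two will be shown to coincide once the equivalence is established.

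For (ii)$\Rightarrow$(i), given a unique solving evolution family $\{\mathcal{U}(t,s),\,t\ge s\}$, I would define $u_s(t,y):=\mathcal{U}(t,s)y$ for $y\in Y_s$. Existence of classical solutions on the dense set $Y_s$ and the exponential bound $\|u_s(t,y)\|_X\le e^{\omega(t-s)}\|y\|_X$ are immediate from the axioms of an evolution family. Uniqueness is obtained indirectly: if $\tilde u$ were a second classical solution through $y$, building a two-parameter family from $\tilde u$ and extending by density would produce a second solving evolution family, contradicting the hypothesized uniqueness. Continuous dependence on $(s,y)$ follows from joint strong continuity of $(t,s)\mapsto\mathcal{U}(t,s)$ together with a standard $\varepsilon/3$ argument that uses the uniform exponential bound.

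For (i)$\Rightarrow$(ii), I would set $\mathcal{U}(t,s)y:=u_s(t,y)$ on $Y_s$; linearity follows from uniqueness. The exponential-boundedness axiom of well-posedness lets me extend $\mathcal{U}(t,s)$ by density to a bounded linear operator on $X$ with $\|\mathcal{U}(t,s)\|_X\le e^{\omega(t-s)}$. The cocycle identity $\mathcal{U}(t,r)\mathcal{U}(r,s)=\mathcal{U}(t,s)$ is first verified on $Y_s$: the restriction of $u_s(\cdot,y)$ to $[r,\infty)$ is a classical solution of the NCP starting at time $r$ from $\mathcal{U}(r,s)y$, so by the NCP uniqueness axiom it coincides with $u_r(\cdot,\mathcal{U}(r,s)y)$; extending both sides by boundedness transfers the identity to all of $X$. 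Joint strong continuity of $(t,s)\mapsto\mathcal{U}(t,s)y$ is then read off from the continuous dependence axiom combined with density. Uniqueness of the evolution family follows because any two solving families must agree on the dense set $Y_s$ by uniqueness of classical solutions. For the last claim, the extra hypothesis $\|e^{\tau\mathcal{A}(t)}\|_X\le e^{\omega\tau}$ says precisely that $\mathcal{A}(t)-\omega I$ generates a contraction semigroup, and by the Lumer–Phillips theorem is dissipative for every $t$. For $y\in Y_s$ and $u(t)=\mathcal{U}(t,s)y$, dissipativity yields the one-sided estimate $\frac{d^+}{dt}\|u(t)\|_X\le\omega\|u(t)\|_X$; integrating gives $\|u(t)\|_X\le e^{\omega(t-s)}\|y\|_X$, and density of $Y_s$ upgrades this to the operator-norm bound.

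I expect the main obstacle to be the density-extension arguments in (i)$\Rightarrow$(ii), most notably the verification of joint strong continuity of the extended $\mathcal{U}(t,s)$: a uniform norm bound alone does not imply strong continuity, so one must combine the continuous-dependence axiom from Definition \ref{NCPW} with the fact that the regularity subspace $Y_s$ itself depends on $s$, which complicates a naive $\varepsilon/3$ approximation. The final dissipativity step is technically clean but relies on the duality-bracket characterization of dissipativity for general (non-reflexive) Banach spaces, which is standard yet worth flagging as the one place where Banach-space subtleties genuinely enter the argument.
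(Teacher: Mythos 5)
The paper does not prove this proposition; it simply cites Engel--Nagel \cite{engel1999one}, so there is no in-paper proof to compare against, and your sketch must be judged on its own. The overall architecture you propose (define $u_s(t,y)=\mathcal{U}(t,s)y$ for one direction, define $\mathcal{U}(t,s)y=u_s(t,y)$ and extend by density for the other, then control the operator norm via dissipativity and Lumer--Phillips) is the standard Engel--Nagel route, and the final dissipativity step is a legitimate way to get $\|\mathcal{U}(t,s)\|\le e^{\omega(t-s)}$, though as you note it rests on a duality-bracket differential inequality that needs to be stated carefully in a non-reflexive Banach space.

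There is, however, a genuine gap in your argument for uniqueness of classical solutions in (ii)$\Rightarrow$(i). You claim that a second classical solution $\tilde u$ through $y\in Y_s$ would, by "building a two-parameter family from $\tilde u$ and extending by density," produce a second solving evolution family. But a single alternative trajectory does not generate a two-parameter family: even if you overwrite $\mathcal{U}(\cdot,s)y$ by $\tilde u(\cdot)$ for this one pair $(s,y)$, you have no way to define the family at other starting times $r>s$ consistently, and in particular you have no reason the cocycle identity $\mathcal{U}(t,r)\mathcal{U}(r,s)=\mathcal{U}(t,s)$ would survive. So no contradiction with uniqueness of the evolution family is actually produced. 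This is in fact the delicate point of the equivalence: the usual trick of showing $\frac{d}{dr}\mathcal{U}(t,r)\tilde u(r)=0$ requires backward differentiability of $\mathcal{U}(t,\cdot)$ in its second argument, which is \emph{not} one of the axioms of a solving evolution family and must be earned. You flagged joint strong continuity as the main obstacle, but the uniqueness step is actually the deeper issue and needs a separate argument rather than the indirect construction you propose.
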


The well-posedness of non-autonomous evolution equations is complicated and there is no general theory describing it. Conditions implying well-posedness are generally divided into parabolic-type assumptions and hyperbolic-type ones. Due to the property of the KPP equation, 
we only study the parabolic-type conditions in this paper, where the domain $(\mathcal{D}(\mathcal{A}(t))$ is independent of $t \in \mathbb{R}$. We refer the interested reader to \cite{schnaubelt2002well} for  more general cases.

\begin{asm}\label{parabolic} (Parabolic-type conditions)
	\begin{enumerate}[(P1)]
		\item The domain $\mathcal{D} = \mathcal{D}(\mathcal{A}(t))$ is independent of $t\in \mathbb{R}$. 
		\item For each $t\in R$ the operator $\mathcal{A}(t)$ is sectorial and generates an analytic semigroup $e^{\cdot \mathcal{A}(t)}$. For all $t\in \mathbb{R}$, the resolvent $\mathcal{R}({\gamma_1}, \mathcal{A}(t))$ exists for all ${\gamma_1} \in \mathbb{C}$ with $\text{Real}{\gamma_1} \ge 0$ and there is a constant $M \ge 1$ such that
		\begin{equation}
		\big|\big|R({\gamma_1}, \mathcal{A}(t))\big|\big|_X \le \frac{M}{|{\gamma_1}| + 1}
		\end{equation} 
		for $\text{Real}{\gamma_1} \ge 0$ and $t\in \mathbb{R}$. The semigroups $e^{\cdot \mathcal{A}(t)}$ satisfy $||e^{\tau \mathcal{A}(t)}||_X \le e^{\omega \tau}$ for some constant $\omega \in \mathbb{R}$.
		\item There exist constants $L\ge 0$ and $0< \theta \le 1$ such that
		\begin{equation}
		\big|\big|(\mathcal{A}(t)-\mathcal{A}(s))\mathcal{A}(0)^{-1} \big|\big|_X \le L|t-s|^\theta,\text{  for all }t,s\in \mathbb{R}.
		\end{equation} 
	\end{enumerate}
\end{asm}
To obtain a convergence estimate for the operator in certain norm,  we need an additional assumption on $\mathcal{A}(t)$ as follows. 
\begin{asm}\label{continuous-appendix}
	The operator $\mathcal{A}(t)$ satisfies a H\"{o}lder continuous condition. Namely, there exists $0 \le \alpha < \beta $ such that for any $x \in \mathcal{D}(\mathcal{A})$, 
	\begin{equation}
	\big|\big|(\mathcal{A}(t)-\mathcal{A}(s))x\big|\big|_X \le C|t-s|^\beta||\mathcal{A}(\tau) x||^{\alpha}_X ||x||_X^{1-\alpha},
	\end{equation}  
	for any $s\le\tau\le t$.
\end{asm}

For forward Euler type discretization, Assumption \ref{continuous-appendix} can be relaxed to $\tau = s$ only. The backword Euler type discretization needs $\tau = t$, and other discretization methods  need different $\tau$'s instead. For analytic semigroups, the following estimate holds true \cite{engel1999one,pazy2012semigroups}.
\begin{lem}\label{Analytic}
	Let $e^{t\mathcal{A}}$ be an anlytical semigroup on $X$. Let $\mathcal{A}$ be the infinitesimal generator. There is a constant $C \ge 0$ such that 
	\begin{equation}
	||\mathcal{A} e^{t \mathcal{A}}||_X \le \frac{C}{t},  \quad t>0, \quad 0\le\alpha \le 1.
	\end{equation}
\end{lem}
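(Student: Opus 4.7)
The plan is to deduce the smoothing estimate from the defining property of analytic semigroups, namely the holomorphic extension of $z\mapsto e^{z\mathcal{A}}$ to a sector around the positive real axis. The cleanest route goes through Cauchy's integral formula; alternatively one can use the Dunford--Taylor contour representation. I will sketch both because the paper references generic semigroup theory.

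First I would recall that, by the characterization of analytic semigroups (Assumption \ref{parabolic} or equivalently the sectoriality of $\mathcal{A}$), the map $z\mapsto e^{z\mathcal{A}}$ extends holomorphically to a sector $\Sigma_\delta=\{z\in\mathbb{C}:z\neq 0,|\arg z|<\delta\}$ for some $\delta>0$, and on any sub-sector $\Sigma_{\delta'}$ with $\delta'<\delta$ one has a uniform bound $\|e^{z\mathcal{A}}\|_X\le K_{\delta'}\,e^{\omega\,\mathrm{Re}\,z}$. In particular, for each fixed $t>0$ the closed disk $\overline{D(t,rt)}$ with $r=\sin(\delta'/2)$ lies inside $\Sigma_{\delta'}$, and on that disk $\|e^{z\mathcal{A}}\|_X$ is bounded by some constant $K$ (absorbing the exponential factor, which is uniformly bounded on the disk).

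Second, I would differentiate via Cauchy's formula. Since the holomorphic extension has classical complex derivative $\tfrac{d}{dz}e^{z\mathcal{A}}=\mathcal{A}\,e^{z\mathcal{A}}$ (this can be verified on a dense set $\mathcal{D}(\mathcal{A})$ and extended by density because $e^{z\mathcal{A}}$ maps $X$ into $\mathcal{D}(\mathcal{A}^\infty)$ for $z$ in the open sector), we get
\begin{equation}
\mathcal{A}\,e^{t\mathcal{A}} \;=\; \frac{1}{2\pi i}\oint_{|z-t|=rt}\frac{e^{z\mathcal{A}}}{(z-t)^2}\,dz.
\end{equation}
Taking operator norms and estimating the integrand by $K/(rt)^2$ times the length $2\pi rt$ of the contour yields
\begin{equation}
\|\mathcal{A}\,e^{t\mathcal{A}}\|_X \;\le\; \frac{K}{rt} \;=\; \frac{C}{t},
\end{equation}
which is the desired bound with $C=K/\sin(\delta'/2)$.

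Third, as a sanity check I would sketch the Dunford--Taylor alternative: write $e^{t\mathcal{A}}=\frac{1}{2\pi i}\int_\Gamma e^{\lambda t}R(\lambda,\mathcal{A})\,d\lambda$ over a suitable sectorial contour $\Gamma$, use $\mathcal{A} R(\lambda,\mathcal{A})=\lambda R(\lambda,\mathcal{A})-I$ (the $I$-term integrating to $0$), and invoke the resolvent bound from Assumption \ref{parabolic} to get $\|\mathcal{A}\,e^{t\mathcal{A}}\|_X\le \frac{1}{2\pi}\int_\Gamma |\lambda|\,|e^{\lambda t}|\,\tfrac{M}{|\lambda|+1}\,|d\lambda|$; the substitution $\lambda=s/t$ converts this into a $t$-independent integral times $1/t$. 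The only mild obstacle is the usual one in this genre of argument, namely ensuring that $\mathcal{A}\,e^{t\mathcal{A}}$ truly defines a bounded operator on all of $X$; this follows because $e^{t\mathcal{A}}$ maps $X$ into $\mathcal{D}(\mathcal{A})$ for $t>0$ (a consequence of the same contour representation), so the composition is everywhere defined and the bound above shows it is bounded. The parameter $\alpha$ appearing in the statement is presumably a vestigial typo from the more general estimate $\|\mathcal{A}^\alpha e^{t\mathcal{A}}\|\le C/t^\alpha$, which one would obtain by the same contour argument using fractional powers.
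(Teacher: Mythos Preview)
Your argument is correct and is essentially the standard textbook proof; in fact the paper does not give its own proof of this lemma at all but simply cites \cite{engel1999one,pazy2012semigroups} as references for this classical smoothing estimate. Your Cauchy-formula derivation (and the Dunford--Taylor alternative) is precisely what one finds in those sources, and your remark that the stray $\alpha$ in the statement is a vestige of the more general bound $\|\mathcal{A}^\alpha e^{t\mathcal{A}}\|\le C t^{-\alpha}$ is also on point.
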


Now we state the first result, which gives the approximation error of the freezing time coefficients
methods for solving the NCP \eqref{NCPE-appendix}. 

\begin{thm}\label{Euler-appendix} 
	Suppose assumptions \ref{parabolic} and \ref{continuous-appendix} hold true.  Let $\mathcal{U}(T,0)$ be the solution operator associated with the NCP \eqref{NCPE-appendix}. Then the solution operator obtained by the freezing time coefficients methods has the following approximation error to $\mathcal{U}(T,0)$ 
	\begin{equation}
	\big|\big|\mathcal{U}(T,0) - \prod_{k=0}^{M-1} e^{\Delta t \mathcal{A}(k\Delta t)}\big|\big|_X 
	\le C(T)(\Delta t)^{\beta-\alpha},
	\label{EulerApproximation0}
	\end{equation} 
	where $T > 0$, $M$ is an integer, and $\Delta t= \frac{T}{M}$.
\end{thm}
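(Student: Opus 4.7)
The plan is to combine a standard telescoping identity with Duhamel's formula for each local error, then absorb the ``singularity'' of the analytic semigroup with the Hölder exponent via Lemma \ref{Analytic}. Writing $V(T,0) = \prod_{k=0}^{M-1} e^{\Delta t\, \mathcal{A}(t_k)}$ with $t_k = k\Delta t$, and using the semigroup property $\mathcal{U}(T,0) = \mathcal{U}(T,t_{k+1})\,\mathcal{U}(t_{k+1},t_k)\,\mathcal{U}(t_k,0)$, I would telescope
\begin{equation*}
\mathcal{U}(T,0) - V(T,0) \;=\; \sum_{k=0}^{M-1} \mathcal{U}(T,t_{k+1})\,\bigl[\mathcal{U}(t_{k+1},t_k) - e^{\Delta t\,\mathcal{A}(t_k)}\bigr]\,V(t_k,0).
\end{equation*}
By Assumption \ref{parabolic} we have the uniform bounds $\|\mathcal{U}(T,t_{k+1})\|_X \le e^{\omega(T-t_{k+1})}$ and $\|V(t_k,0)\|_X \le e^{\omega t_k}$, so the whole estimate reduces to controlling the local error $\mathcal{E}_k := \mathcal{U}(t_{k+1},t_k) - e^{\Delta t\,\mathcal{A}(t_k)}$ in $X$.

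For the local error I would introduce $w(t) = [\mathcal{U}(t,t_k) - e^{(t-t_k)\mathcal{A}(t_k)}]x$ and observe that $w$ satisfies the inhomogeneous Cauchy problem $w'(t) = \mathcal{A}(t)w(t) + [\mathcal{A}(t)-\mathcal{A}(t_k)]\,e^{(t-t_k)\mathcal{A}(t_k)}x$ with $w(t_k)=0$. Duhamel's formula then gives
\begin{equation*}
w(t_{k+1}) \;=\; \int_{t_k}^{t_{k+1}} \mathcal{U}(t_{k+1},s)\,\bigl[\mathcal{A}(s)-\mathcal{A}(t_k)\bigr]\,e^{(s-t_k)\mathcal{A}(t_k)}x\,ds.
\end{equation*}
I would then apply Assumption \ref{continuous-appendix} with the choice $\tau = t_k$ to the vector $y = e^{(s-t_k)\mathcal{A}(t_k)}x$, yielding
\begin{equation*}
\bigl\|[\mathcal{A}(s)-\mathcal{A}(t_k)]\,y\bigr\|_X \;\le\; C\,(s-t_k)^\beta \,\bigl\|\mathcal{A}(t_k)\,e^{(s-t_k)\mathcal{A}(t_k)}x\bigr\|_X^{\alpha} \,\|y\|_X^{1-\alpha}.
\end{equation*}
The key step is now to use the smoothing estimate $\|\mathcal{A}(t_k)e^{(s-t_k)\mathcal{A}(t_k)}\|_X \le C/(s-t_k)$ from Lemma \ref{Analytic} together with $\|e^{(s-t_k)\mathcal{A}(t_k)}\|_X \le e^{\omega(s-t_k)}$, which collapses the right-hand side to $C(s-t_k)^{\beta-\alpha}\|x\|_X$. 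Since $\beta-\alpha > -1$ (in fact nonnegative), integrating over $[t_k,t_{k+1}]$ produces $\|w(t_{k+1})\|_X \le C(\Delta t)^{\beta-\alpha+1}\|x\|_X$, i.e.\ $\|\mathcal{E}_k\|_X \le C(\Delta t)^{\beta-\alpha+1}$.

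Finally I would plug this local bound into the telescoped sum: there are $M = T/\Delta t$ terms, each contributing $e^{\omega T} C(\Delta t)^{\beta-\alpha+1}$, so the cumulative error is at most $e^{\omega T} C \, T \,(\Delta t)^{\beta-\alpha} = C(T)(\Delta t)^{\beta-\alpha}$, which is exactly \eqref{EulerApproximation0}. The main technical obstacle, in my view, is not the telescoping itself but correctly matching the Hölder-interpolation assumption to the smoothing of the frozen analytic semigroup: one must pick $\tau = t_k$ (rather than $\tau=s$) in Assumption \ref{continuous-appendix} so that $\mathcal{A}(\tau)e^{(s-t_k)\mathcal{A}(t_k)}$ is exactly the quantity controlled by Lemma \ref{Analytic}; otherwise the singularity in $s-t_k$ does not cancel cleanly. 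A secondary care point is that uniform boundedness of the discrete evolution $V(t_k,0)$ in $X$ is not automatic for non-commuting generators, but it follows directly from the contractive-type bound $\|e^{\tau\mathcal{A}(t)}\|_X \le e^{\omega\tau}$ postulated in Assumption \ref{parabolic}(P2), so no additional work is needed there.
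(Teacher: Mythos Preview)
Your proposal is correct and follows essentially the same route as the paper: the Duhamel identity you derive for $w$ is exactly the freezing-coefficient integral formula the paper invokes, and the subsequent use of Assumption~\ref{continuous-appendix} (with $\tau=t_k$, i.e.\ the left endpoint) together with the analytic smoothing bound and the telescoping sum matches the paper's argument step for step. Your remarks on the choice of $\tau$ and on the boundedness of $V(t_k,0)$ are the right care points and are handled in the paper in the same way.
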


\begin{proof}
	First we refer to \cite{schnaubelt2002well} for the abstract version of the method of freezing coefficients, 
	\begin{equation}
	\mathcal{U}(t,s) = e^{(t-s)\mathcal{A}(s)} + \int_s^t \mathcal{U}(t,\tau) (\mathcal{A}(\tau) - \mathcal{A}(s))e^{(\tau-s)\mathcal{A}(s)}d\tau,
	\label{EulerApproximation1}
	\end{equation}	
	which immediately gives us that, for every $x \in X$,
	\begin{align}
	&\big|\big|(\mathcal{U}(t,s) - e^{(t-s)\mathcal{A}(s)})x\big|\big|_X \nonumber\\
	= &\big|\big|\int_s^t \mathcal{U}(t,\tau) (\mathcal{A}(\tau) -\mathcal{A}(s))e^{(\tau-s)\mathcal{A}(s)}xd\tau\big|\big|_X  \nonumber\\
	\le&\int_s^t \big|\big|\mathcal{U}(t,\tau)\big|\big|_X (\tau-s)^\beta \big|\big|\mathcal{A}(s) e^{(\tau-s)\mathcal{A}(s)}x\big|\big|_X^\alpha \big|\big|e^{(\tau-s)\mathcal{A}(s)}x\big|\big|^{1-\alpha}_X d\tau.
	\label{EulerApproximation2}
	\end{align}
	In \eqref{EulerApproximation2}, we have used the fact that $e^{(\tau-s)\mathcal{A}(s)}x \in \mathcal{D}(\mathcal{A}) $ for any $x \in X$. Notice that $\mathcal{A}(s)$ generates an analytic semigroup $e^{\cdot \mathcal{A}(s)}$. According to \eqref{Analytic}, we have the following estimate
	\begin{equation}
	\big|\big|\mathcal{A}(s) e^{(\tau -s )\mathcal{A}(s)}\big|\big|^\alpha_X \le C (\tau -  s)^{-\alpha} e^{\omega \alpha(\tau - s)}.
	\label{EulerApproximation3}
	\end{equation}
	Substituting \eqref{EulerApproximation3} into \eqref{EulerApproximation2}, we obtain that,
	\begin{align}
	&\big|\big|(\mathcal{U}(t,s) - e^{(t-s)\mathcal{A}(s)})x\big|\big|_X \nonumber\\ 
	\le &\int_s^t C e^{\omega (t-\tau)} (\tau-s)^{\beta-\alpha} e^{\omega (\tau - s)} d\tau ||x||_X = \frac{C}{1+\beta-\alpha}e^{\omega(t-s)} (t-s)^{1+\beta-\alpha} ||x||_X.
	\label{EulerApproximation4}
	\end{align}
	Thus, we get the estimate for the operator in the norm $||\cdot||_X$ 
	\begin{equation}
	\big|\big|\mathcal{U}(t,s) - e^{(t-s)\mathcal{A}(s)}\big|\big|_X \le \frac{C}{1+\beta-\alpha}e^{\omega(t-s)} (t-s)^{1+\beta-\alpha}.
	\label{EulerApproximation5}
	\end{equation}
	We denote $\mathcal{U}(T,0) = \prod_{k=0}^{M-1} \mathcal{U}((k+1)\Delta t, k\Delta t)$. Using the telescoping sum argument, we obtain   
	\begin{align}
	&\big|\big|\mathcal{U}(T,0) - \prod_{k=0}^{M-1} e^{\Delta t \mathcal{A}(k\Delta t)}\big|\big|_X  \nonumber \\
	=& \Big|\Big|\sum_{j = 0}^{M-1} \prod_{k=j+1}^{M-1} U((k+1)\Delta t,k\Delta t) \big(\mathcal{U}((j+1)\Delta t,j\Delta t) - e^{\Delta t\mathcal{A}(j\Delta t)}\big)\prod_{l=0}^{j-1} e^{\Delta t\mathcal{A}(l\Delta t)}\Big|\Big|_X\nonumber\\
	\le& \sum_{j = 0}^{M-1} e^{\omega (N-j-1)\Delta t} \frac{C}{1+\beta-\alpha}e^{\omega \Delta t} (\Delta t)^{1+\beta-\alpha} e^{\omega j\Delta t} =  \frac{Ce^{\omega T}}{1+\beta-\alpha} (\Delta t)^{\beta-\alpha}.
	\end{align}
	The statement in \eqref{EulerApproximation0} is proved. 
\end{proof}

For higher order operator splitting methods,  in some specific situation the higher order convergence has been proved in \cite{hochbruck2003magnus,hochbruck2010exponential}. In their works, the assumption \ref{continuous-appendix} was largely strengthen, both for the operator $\mathcal{A}(t)$ and initial condition, and the convergence was largely depends on the graph norm $||v||_{\alpha} := ||\mathcal{A}(t)^\alpha v||_X$. 
The convergence in norm $||\cdot||_X$ is still open and will be our future research work.

\subsection{Operator splitting methods for solving non-autonomous evolution equations}
\noindent
We study the approximation error of operator splitting methods in solving non-autonomous evolution equations. 
To be specific, we consider an abstract NCP as follows
\begin{equation}\label{NCPE}
\left\{
\begin{aligned}
\frac{d}{dt} u(t) &= (\mathcal{A}(t)+\mathcal{B}(t)) u(t), \quad t\ge s\in \mathbb{R},\\
u(s) &= x \in X,
\end{aligned}
\right.
\end{equation}
on a Banach space $X$, where $\mathcal{A}(t)$ and $\mathcal{B}(t)$ are linear operators, $\mathcal{D}(\mathcal{A}(t))$ is independent of $t$ and dense in $X$, and for each $t \in \mathbb{R}$, $\mathcal{A}(t)$, $\mathcal{B}(t)$ and $\mathcal{A}(t) + \mathcal{B}(t)$ generate  strongly continuous semigroups $e^{\cdot \mathcal{A}(t)}$, $e^{\cdot \mathcal{B}(t)}$ and $ e^{\cdot(\mathcal{A}(t) + \mathcal{B}(t))}$, respectively. Furthermore, due to the property of evolution equation, solving $u(t)$ and solving $e^{{\gamma_1} t}u(t)$ is equivalent, we assume $\big|\big|e^{\tau \mathcal{A}(t)}\big|\big|_X \le 1$,$\big|\big|e^{\tau \mathcal{B}(t)}\big|\big|_X \le 1$,$\big|\big|e^{\tau (\mathcal{A}(t)+\mathcal{B}(t))}\big|\big|_X \le 1$.

We will study the NCP \eqref{NCPE} based on the perturbation theory. We assume $\mathcal{A}(t)$ is a sectorial operator, which generates an analytical semigroups $e^{\cdot \mathcal{A}(t)}$, and assume $\mathcal{B}(t)$ is bounded, thus $\mathcal{A}(t) + \mathcal{B}(t)$ is also sectorial and generates an analytical semigroups $e^{\cdot(\mathcal{A}(t) + \mathcal{B}(t))}$, where $\mathcal{D}(\mathcal{A}(t) + \mathcal{B}(t)) = \mathcal{D}(\mathcal{A}(t))$. In addition, we assume that the operator $\mathcal{A}(t) + \mathcal{B}(t)$ satisfies assumptions \ref{parabolic} and \ref{continuous-appendix}.
Therefore, the corresponding evolution family $\mathcal{U}(t,s)$ solves the NCP problem $\eqref{NCPE}$ 
and admits an Euler-type approximation, i.e.,
\begin{equation}
\big|\big|\mathcal{U}(T,0) - \prod_{k=0}^{M-1} e^{\Delta t (\mathcal{A} + \mathcal{B})(k\Delta t)}\big|\big|_X \le C(T)(\Delta t)^{\beta-\alpha},
\end{equation}
where $T = M\Delta t$, $\alpha,\beta$ are constants defined in assumptions \ref{parabolic} and \ref{continuous-appendix}. 

In the sequel, we  analyze the error between $\prod_{k=0}^{M-1} e^{\Delta t (\mathcal{A} + \mathcal{B})(k\Delta t)}$ and
$\prod_{k=0}^{M-1} e^{\Delta t\mathcal{A}(k\Delta t)}e^{\Delta t\mathcal{B}(k\Delta t)}$. 
 
First, we list all the assumptions as follows:
\begin{asm}\label{analytical}
	\begin{enumerate}
		\item ${\mathcal{A}(t)}_{t\ge0}$ and ${\mathcal{B}(t)}_{t\ge0}$ are all linear operators (may be unbounded) on X,
		\item $\mathcal{D}(\mathcal{A}(t))$ are the same for all $t$ and dense in $X$, 
		\item $||\mathcal{B}(t)||_X < C$ for all $t \ge 0$,
		\item $\mathcal{A}(t)$ satisfies \ref{parabolic} and $\mathcal{A}(t) + \mathcal{B}(t)$ satisfies \ref{parabolic} and \ref{continuous-appendix},
		\item $||e^{\tau \mathcal{A}(t)}||_X \le 1$,$||e^{\tau \mathcal{B}(t)}||_X \le 1$,$||e^{\tau (\mathcal{A}(t)+\mathcal{B}(t))}||_X \le 1$ for all $\tau\ge0$.
	\end{enumerate}
\end{asm}
To obtain a convergence theorem, we need an extra assumption in $\mathcal{A}$ and $\mathcal{B}$.
\begin{asm}\label{commutator}
	For the commutator $[\mathcal{A}(t),\mathcal{B}(t)] = \mathcal{A}(t)\mathcal{B}(t) - \mathcal{B}(t)\mathcal{A}(t)$, we assume that there is a non-negative $\gamma$ with
	\begin{equation}
	\big|\big|[\mathcal{A}(t),\mathcal{B}(t)] x\big|\big|_X \le c_1 \big|\big| \mathcal{A}(t) x\big|\big|^{\gamma}_X ||x||_X^{1-\gamma}, ~\forall~ x \in \mathcal{D}(\mathcal{A}).
	\end{equation}
\end{asm}

Next is a standard result from \cite{jahnke2000error}, and we prove it here.

\begin{thm}\label{onestep}
	Suppose assumptions \ref{analytical} and \ref{commutator} are satisfied. We have the following error estimate for the operator splitting method, 
	\begin{equation}
	\big|\big|(e^{\tau \mathcal{A}(t)}e^{\tau \mathcal{B}(t)} -  e^{\tau(\mathcal{A}(t)+\mathcal{B}(t))})x\big|\big|_X \le 
	C_1 \tau^{2-\gamma} ||x||_X, ~\forall x \in X,
	\label{OPerrorestimate1}
	\end{equation}
	where $C_1$ depends only on $c_1$, $\gamma$ and $||\mathcal{B}||_X$.
\end{thm}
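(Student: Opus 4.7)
The proof plan is to treat $t$ as frozen (since the theorem concerns a single step) and view this as essentially an autonomous Lie--Trotter splitting estimate, with the subtle point being that Assumption \ref{commutator} only gives a fractional power of $\mathcal{A}$. Write $\mathcal{A}:=\mathcal{A}(t)$ and $\mathcal{B}:=\mathcal{B}(t)$ for brevity, and set
\begin{equation}
F(\tau) = e^{\tau(\mathcal{A}+\mathcal{B})} - e^{\tau \mathcal{A}}\,e^{\tau \mathcal{B}}.
\end{equation}
The first step is to derive a Duhamel representation. Since $F(0)=0$ and differentiation in $\tau$ gives
\begin{equation}
F'(\tau) = (\mathcal{A}+\mathcal{B}) F(\tau) + \mathcal{B} e^{\tau\mathcal{A}} e^{\tau\mathcal{B}} - e^{\tau\mathcal{A}}\mathcal{B} e^{\tau\mathcal{B}} = (\mathcal{A}+\mathcal{B})F(\tau) + [\mathcal{B}, e^{\tau\mathcal{A}}]\,e^{\tau\mathcal{B}},
\end{equation}
the variation-of-constants formula yields
\begin{equation}
F(\tau) = \int_0^\tau e^{(\tau-s)(\mathcal{A}+\mathcal{B})}\,[\mathcal{B}, e^{s\mathcal{A}}]\,e^{s\mathcal{B}}\,ds.
\end{equation}

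The second step is to express the inner commutator $[\mathcal{B}, e^{s\mathcal{A}}]$ by another Duhamel identity. Differentiating $r\mapsto e^{(s-r)\mathcal{A}}\mathcal{B} e^{r\mathcal{A}}$ and integrating from $0$ to $s$ gives, on $\mathcal{D}(\mathcal{A})$,
\begin{equation}
[\mathcal{B}, e^{s\mathcal{A}}] = -\int_0^s e^{(s-r)\mathcal{A}}\,[\mathcal{A},\mathcal{B}]\,e^{r\mathcal{A}}\,dr.
\end{equation}
Because $\mathcal{A}$ generates an analytic semigroup (Assumption \ref{analytical}), $e^{r\mathcal{A}} x \in \mathcal{D}(\mathcal{A})$ for every $x\in X$ and $r>0$, so the identity extends to all $x\in X$ under the integral. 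Applying Assumption \ref{commutator} together with Lemma \ref{Analytic}, one gets the pointwise estimate
\begin{equation}
\bigl\|[\mathcal{A},\mathcal{B}]\,e^{r\mathcal{A}}x\bigr\|_X \le c_1\,\|\mathcal{A} e^{r\mathcal{A}} x\|_X^{\gamma}\,\|e^{r\mathcal{A}} x\|_X^{1-\gamma} \le c_1\,C^{\gamma}\,r^{-\gamma}\,\|x\|_X,
\end{equation}
where the contraction bound $\|e^{r\mathcal{A}}\|_X \le 1$ is used for the second factor.

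In the third step, combining the two Duhamel formulas and using the contraction bounds $\|e^{(\tau-s)(\mathcal{A}+\mathcal{B})}\|_X\le 1$, $\|e^{r\mathcal{A}}\|_X\le 1$, $\|e^{s\mathcal{B}}\|_X\le 1$ from Assumption \ref{analytical} gives
\begin{equation}
\bigl\|[\mathcal{B}, e^{s\mathcal{A}}]\,x\bigr\|_X \le c_1 C^{\gamma}\,\|x\|_X\int_0^s r^{-\gamma}\,dr = \frac{c_1 C^{\gamma}}{1-\gamma}\,s^{1-\gamma}\,\|x\|_X,
\end{equation}
and then
\begin{equation}
\|F(\tau) x\|_X \le \int_0^\tau \frac{c_1 C^{\gamma}}{1-\gamma}\,s^{1-\gamma}\,ds\,\|x\|_X = \frac{c_1 C^{\gamma}}{(1-\gamma)(2-\gamma)}\,\tau^{2-\gamma}\,\|x\|_X,
\end{equation}
which is exactly the claimed estimate \eqref{OPerrorestimate1}. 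The main obstacle is the regularity question: since $x\in X$ need not lie in $\mathcal{D}(\mathcal{A})$, the inner commutator identity is only valid after the analytic semigroup regularizes $x$; the integrability near $r=0$ of the singular factor $r^{-\gamma}$ then crucially depends on $\gamma<1$, a condition that must be read out of Assumption \ref{commutator}. Once those regularity issues are handled, the chain of two Duhamel identities plus the interpolation bound from the commutator assumption produces the $2-\gamma$ exponent automatically.
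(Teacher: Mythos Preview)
Your argument is correct and reaches the same bound $\dfrac{c_1 C^{\gamma}}{(1-\gamma)(2-\gamma)}\,\tau^{2-\gamma}$, but the route differs from the paper's. The paper follows the Jahnke--Lubich template: it expands $e^{\tau(\mathcal{A}+\mathcal{B})}$ twice by Duhamel to peel off an $O(\tau^2\|\mathcal{B}\|_X^2)$ remainder $R_1$, expands $e^{\tau\mathcal{A}}e^{\tau\mathcal{B}}$ by the exponential series in $\mathcal{B}$ to peel off a matching remainder $R_2$, and reduces the main difference to $\tau f(\tau)-\int_0^\tau f(s)\,ds$ with $f(s)=e^{s\mathcal{A}}\mathcal{B}e^{(\tau-s)\mathcal{A}}x$, rewritten as $\tau^2\int_0^1\theta f'(\theta\tau)\,d\theta$; the commutator appears via $f'(s)=e^{s\mathcal{A}}[\mathcal{A},\mathcal{B}]e^{(\tau-s)\mathcal{A}}x$. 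Your approach instead writes a single Duhamel formula for the difference $F(\tau)$ and then a second Duhamel formula for the inner commutator $[\mathcal{B},e^{s\mathcal{A}}]$, and integrates directly. This is cleaner in that no separate $R_1,R_2$ terms are generated, so the explicit $\|\mathcal{B}\|_X^2$ contribution never enters (you only use $\|e^{s\mathcal{B}}\|_X\le 1$); conversely, the paper's version localizes the analytic-semigroup singularity at $s=\tau$ rather than at $r=0$, and avoids having to justify the differentiation of $r\mapsto e^{(s-r)\mathcal{A}}\mathcal{B}e^{r\mathcal{A}}$, which is the regularity point you correctly flag (handled by density in $\mathcal{D}(\mathcal{A})$ and the integrability of $r^{-\gamma}$ for $\gamma<1$). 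Both arguments require $\gamma<1$, implicit in the final constant.
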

   \begin{proof}
	We use the freezing coefficient formula and obtain
	\begin{equation}
	e^{\tau(\mathcal{A}(t)+\mathcal{B}(t))}x = e^{\tau(\mathcal{A}(t))}x + \int_{0}^{\tau} e^{s\mathcal{A}(t)} \mathcal{B}(t) e^{(\tau-s)(\mathcal{A}(t)+\mathcal{B}(t))} x ds.
	\label{OPerrorestimate2}
	\end{equation} 
	Expressing the term $e^{(\tau-s)(\mathcal{A}(t)+\mathcal{B}(t))}$ using the integral form \eqref{OPerrorestimate2}, we have
	\begin{equation}
	e^{\tau(\mathcal{A}(t)+\mathcal{B}(t))}x = e^{\tau(\mathcal{A}(t))}x+ \int_{0}^{\tau} e^{s\mathcal{A}(t)} \mathcal{B}(t) e^{(\tau-s)\mathcal{A}(t)}x ds + R_1x,
	\label{OPerrorestimate3}
	\end{equation} 
	where 
	\begin{equation}
	R_1 = \int_{0}^{\tau} e^{s\mathcal{A}(t)}\mathcal{B}(t) \int_{0}^{\tau - s} e^{\sigma \mathcal{A}(t)} \mathcal{B}(t) e^{(\tau-s-\sigma)(\mathcal{A}(t)+\mathcal{B}(t))} d\sigma ds.
	\label{OPerrorestimate4}
	\end{equation} 
	We can easily verify that the term $R_1$ is bounded, i.e., $||R_1||_X \le \frac{1}{2} \tau^2 ||\mathcal{B}(t)||_X^2$.
	
	On the other hand side, we express the term $e^{\tau \mathcal{B}(t)}$ into exponential series and obtain 
	\begin{equation}
	e^{\tau \mathcal{A}(t)} e^{\tau \mathcal{B}(t)}x = e^{\tau \mathcal{A}(t)}x+ \tau e^{\tau \mathcal{A}(t)} \mathcal{B}(t)x + R_2 x,
	\label{OPerrorestimate5}
	\end{equation} 
	where $||R_2 ||_X \le \frac{1}{2} \tau^2 ||\mathcal{B}(t)||_X^2$.
	
	Denoted by $f(s) = e^{s\mathcal{A}(t)} \mathcal{B}(t) e^{(\tau - s)\mathcal{A}(t)} x$, we have
	\begin{equation}
	e^{\tau \mathcal{A}(t)} e^{\tau \mathcal{B}(t)}x - e^{\tau(\mathcal{A}(t)+\mathcal{B}(t))}x = \tau f(\tau) - \int_0^\tau f(s) ds + r = d + r,
	\label{OPerrorestimate6}
	\end{equation} 
	where $d = \tau f(\tau) - \int_0^\tau f(s) ds = \tau^2 \int_0^1 \theta f'(\theta \tau) d\theta$ and $ r = R_2x - R_1x$. 
	
	Since $f'(s) = e^{s \mathcal{A}(t)} [\mathcal{A}(t),\mathcal{B}(t)] e^{(\tau - s)\mathcal{A}(t)} x$, assumption \ref{commutator} implies  
	\begin{align}
	 \big|\big|e^{s \mathcal{A}(t)}[\mathcal{A}(t),\mathcal{B}(t)] e^{(\tau - s)\mathcal{A}(t)} x \big|\big|_X \le c_1|| e^{s \mathcal{A}(t)}||_X||\mathcal{A}(t) e^{(\tau - s)\mathcal{A}(t)} x \big|\big|^{\gamma}_X \big|\big| e^{(\tau - s)\mathcal{A}(t)} x \big|\big|_X^{1-\gamma}.  
	\label{OPerrorestimate8}
	\end{align}
	By using the property of analytic semigroup \ref{Analytic}, we know that 
	\begin{equation}
	\big|\big|\mathcal{A}(t) e^{(\tau - s)\mathcal{A}(t)} x \big|\big|_X \le C(\tau - s )^{-1} ||x||_X.
	\label{OPerrorestimate9}
	\end{equation}
	 Thus, we have
	\begin{align}
	 ||d||_X = \big|\big|\tau^2 \int_0^1 \theta f'(\theta \tau) d\theta\big|\big|_X 
	\le  \big|\tau^2 \int_0^1 C\theta (\tau - \theta \tau)^{-\gamma} d\theta\big| ||v||_X  
	= \frac{C}{(1-\gamma)(2-\gamma)}\tau^{2-\gamma} ||v||_X.
	\label{OPerrorestimate10}
	\end{align}
	Notice that $||r||_X \le \tau^2 ||\mathcal{B}||_X^2$. We finish the proof.
\end{proof}


Using the one step estimate obtained in Theorem \ref{onestep}, we finally obtain the error estimate
for the operator splitting method.
 
\begin{thm} \label{splitting}
	Suppose assumptions \ref{analytical} and \ref{commutator} hold true. We have the following 
	error estimate for the operator splitting method in solving the NCP \eqref{NCPE}.
	\begin{equation}
	\big|\big|\prod_{k=0}^{N-1} e^{\Delta t (\mathcal{A} + \mathcal{B})(k\Delta t)} - \prod_{k=0}^{N-1} e^{\Delta t\mathcal{A}(k\Delta t)}e^{\Delta t\mathcal{B}(k\Delta t)}\big|\big|_X \le C_1 (\Delta t)^{1-\gamma},
	\label{OPerror1}
	\end{equation} 
	where $C_1$ is a constant independent of $\gamma$.
\end{thm}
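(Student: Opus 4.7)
The plan is to reduce the global error bound to the one-step estimate of Theorem \ref{onestep} via a standard telescoping-sum (or Lady Windermere's fan) argument. Writing $t_k=k\Delta t$, $F_k=e^{\Delta t(\mathcal{A}+\mathcal{B})(t_k)}$ and $E_k=e^{\Delta t\mathcal{A}(t_k)}e^{\Delta t\mathcal{B}(t_k)}$, the idea is to express the product difference as
\begin{equation}
\prod_{k=0}^{N-1}F_k-\prod_{k=0}^{N-1}E_k=\sum_{j=0}^{N-1}\Bigl(\prod_{k=j+1}^{N-1}F_k\Bigr)(F_j-E_j)\Bigl(\prod_{k=0}^{j-1}E_k\Bigr),
\end{equation}
which is a straightforward telescoping identity in a non-commutative product setting. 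Each summand then isolates the local splitting error at a single step, sandwiched between two propagators whose norms I will control using stability.

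The stability of the two product factors is available directly from Assumption \ref{analytical}: since $\|e^{\tau(\mathcal{A}(t)+\mathcal{B}(t))}\|_X\le 1$ and $\|e^{\tau\mathcal{A}(t)}\|_X,\|e^{\tau\mathcal{B}(t)}\|_X\le 1$ for all $\tau\ge 0$ and $t\ge 0$, we immediately have $\|\prod_{k=j+1}^{N-1}F_k\|_X\le 1$ and $\|\prod_{k=0}^{j-1}E_k\|_X\le 1$. The one-step local error is controlled by Theorem \ref{onestep} applied at each frozen time $t_j$, giving $\|F_j-E_j\|_X\le C_1(\Delta t)^{2-\gamma}$, where the constant $C_1$ depends only on $c_1$, $\gamma$ and $\sup_t\|\mathcal{B}(t)\|_X$, hence is uniform in $j$. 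Combining these three bounds term-by-term and summing over the $N=T/\Delta t$ indices gives
\begin{equation}
\Bigl\|\prod_{k=0}^{N-1}F_k-\prod_{k=0}^{N-1}E_k\Bigr\|_X\le N\cdot C_1(\Delta t)^{2-\gamma}= C_1 T(\Delta t)^{1-\gamma},
\end{equation}
which is the desired estimate after absorbing $T$ into the constant.

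There is no real analytical obstacle here because the hard work has already been done in Theorem \ref{onestep}: the crucial $(\Delta t)^{2-\gamma}$ local bound, which came from using the commutator assumption together with the smoothing property $\|\mathcal{A}(t)e^{s\mathcal{A}(t)}\|_X\le Cs^{-1}$ of analytic semigroups. The only mild subtlety worth verifying carefully is that the contraction estimates in Assumption \ref{analytical} are genuinely uniform in $t$, so that the telescoping bound does not pick up an $N$-dependent multiplicative constant; this is automatic because the assumption is stated for all $t\ge 0$. If the bounds $\|e^{\tau\mathcal{A}(t)}\|_X\le 1$ etc.\ failed and were instead of the form $e^{\omega\tau}$, the same argument would go through with an extra $e^{\omega T}$ factor, exactly as in the proof of Theorem \ref{Euler-appendix}. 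Thus the statement follows essentially mechanically from the local estimate plus uniform stability, and the main care point is bookkeeping the uniformity rather than any new technical step.
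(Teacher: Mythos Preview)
Your proof is correct and follows essentially the same approach as the paper: a telescoping (Lady Windermere's fan) decomposition of the product difference, with the outer propagators bounded by the contraction estimates in Assumption \ref{analytical} and the local error supplied by Theorem \ref{onestep}. The only cosmetic difference is that you make the factor $T=N\Delta t$ explicit before absorbing it into the constant, whereas the paper's write-up folds it in silently.
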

\begin{proof}
	We take $t = j\Delta t$ and $ s = (j-1)\Delta t $ for $j = 1,\cdots,M-1$ in Theorem \ref{onestep}, and by using the telescoping sum argument, we obtain that for any $x \in X$,
	\begin{align}
	&\big|\big|\prod_{k=0}^{M-1} e^{\Delta t (\mathcal{A} + \mathcal{B})(k\Delta t)}x - \prod_{k=0}^{M-1} e^{\Delta t\mathcal{A}(k\Delta t)}e^{\Delta t\mathcal{B}(k\Delta t)}x\big|\big|_X  \nonumber \\
	= &\Big|\Big|\sum_{j = 0}^{M-1} \prod_{k=j+1}^{M-1} e^{\Delta t (\mathcal{A} + \mathcal{B})(k\Delta t)} \big(e^{\Delta t (\mathcal{A} + \mathcal{B})(j\Delta t)} - e^{\Delta t\mathcal{A}(j\Delta t)}e^{\Delta t\mathcal{B}(j\Delta t)}\big)\prod_{l=0}^{j-1} e^{\Delta t\mathcal{A}(l\Delta t)}e^{\Delta t\mathcal{B}(l\Delta t)}x\Big|\Big|_X \nonumber\\
	\le& \sum_{j = 0}^{M-1} C_1 (\Delta t)^{2-\gamma} \big|\big|\prod_{l=0}^{j-1} e^{\Delta t\mathcal{A}(l\Delta t)}e^{\Delta t\mathcal{B}(l\Delta t)}x\big|\big|_X   
	\le  \sum_{j = 0}^{M-1} C_1 (\Delta t)^{2-\gamma} ||x||_X = C_1 (\Delta t)^{1-\gamma} ||x||_X.
	\label{OPerror2}
	\end{align}
	Thus, we finish the proof.
\end{proof}

\bibliographystyle{siam}
\bibliography{ZWpaperGRF20}

\begin{thebibliography}{10}

\bibitem{audoly2000reaction}
{\sc B.~Audoly, H.~Berestycki, and Y.~Pomeau}, {\em R{\'e}action diffusion en
  {\'e}coulement stationnaire rapide}, Comptes Rendus de l'Acad{\'e}mie des
  Sciences-Series IIB-Mechanics-Physics-Astronomy, 328 (2000), pp.~255--262.

\bibitem{batkai2011operator}
{\sc A.~B{\'a}tkai, P.~Csom{\'o}s, B.~Farkas, and G.~Nickel}, {\em Operator
  splitting for non-autonomous evolution equations}, Journal of Functional
  Analysis, 260 (2011), pp.~2163--2190.

\bibitem{berestycki2005speed}
{\sc H.~Berestycki, F.~Hamel, and N.~Nadirashvili}, {\em The speed of
  propagation for {KPP} type problems. {I}: {P}eriodic framework}, Journal of
  The European Mathematical Society, 7 (2005), pp.~173--213.

\bibitem{brummell2001linear}
{\sc N.~Brummell, F.~Cattaneo, and S.~Tobias}, {\em Linear and nonlinear dynamo
  properties of time-dependent {ABC} flows}, Fluid Dynamics Research, 28
  (2001), p.~237.

\bibitem{chicone1999evolution}
{\sc C.~Chicone and Y.~Latushkin}, {\em Evolution semigroups in dynamical
  systems and differential equations}, no.~70, American Mathematical Soc.,
  1999.

\bibitem{childress1979alpha}
{\sc S.~Childress}, {\em Alpha-effect in flux ropes and sheets}, Physics of the
  Earth and Planetary Interiors, 20 (1979), pp.~172--180.

\bibitem{childress1995stretch}
{\sc S.~Childress and A.~D. Gilbert}, {\em Stretch, twist, fold: the fast
  dynamo}, vol.~37, Springer Science \& Business Media, 1995.

\bibitem{del2004feynman}
{\sc P.~Del~Moral}, {\em Feynman-{K}ac formulae}, in Feynman-{K}ac {F}ormulae,
  Springer, 2004, pp.~47--93.

\bibitem{den2008large}
{\sc F.~Den~Hollander}, {\em Large deviations}, vol.~14, American Mathematical
  Soc., 2008.

\bibitem{Frisch:86}
{\sc T.~Dombre, U.~Frisch, J.~M. Greene, M.~Henon, A.~Mehr, and M.~Soward},
  {\em Chaotic streamlines in the {A}{B}{C} flows}, J. Fluid Mech., 167 (1986),
  pp.~353--391.

\bibitem{engel1999one}
{\sc K.~Engel and R.~Nagel}, {\em One-parameter semigroups for linear evolution
  equations}, vol.~194, Springer Science \& Business Media, 1999.

\bibitem{ferre2019error}
{\sc G.~Ferr{\'e} and G.~Stoltz}, {\em Error estimates on ergodic properties of
  discretized {F}eynman--{K}ac semigroups}, Numerische Mathematik, 143 (2019),
  pp.~261--313.

\bibitem{fisher1937wave}
{\sc R.~Fisher}, {\em The wave of advance of advantageous genes}, Annals of
  eugenics, 7 (1937), pp.~355--369.

\bibitem{foulkes2001quantum}
{\sc W.~Foulkes, L.~Mitas, R.~Needs, and G.~Rajagopal}, {\em Quantum {M}onte
  {C}arlo simulations of solids}, Reviews of Modern Physics, 73 (2001), p.~33.

\bibitem{KflowGalloway:1992}
{\sc D.~Galloway and M.~Proctor}, {\em Numerical calculations of fast dynamos
  in smooth velocity fields with realistic diffusion}, Nature, 356 (1992),
  p.~691.

\bibitem{gartner1979}
{\sc J.~G{\"a}rtner and M.~Freidlin}, {\em On the propagation of concentration
  waves in periodic and random media}, in Doklady Akademii Nauk, vol.~249,
  Russian Academy of Sciences, 1979, pp.~521--525.

\bibitem{hess1991periodic}
{\sc P.~Hess}, {\em Periodic-parabolic boundary value problems and positivity},
  Longman, 1991.

\bibitem{hochbruck2003magnus}
{\sc M.~Hochbruck and C.~Lubich}, {\em On {M}agnus integrators for
  time-dependent {S}chr{\"o}dinger equations}, SIAM Journal on Numerical
  Analysis, 41 (2003), pp.~945--963.

\bibitem{hochbruck2010exponential}
{\sc M.~Hochbruck and A.~Ostermann}, {\em Exponential integrators}, Acta
  Numerica, 19 (2010), pp.~209--286.

\bibitem{jahnke2000error}
{\sc T.~Jahnke and C.~Lubich}, {\em Error bounds for exponential operator
  splittings}, BIT Numerical Mathematics, 40 (2000), pp.~735--744.

\bibitem{kato2013perturbation}
{\sc T.~Kato}, {\em Perturbation theory for linear operators}, vol.~132,
  Springer Science \& Business Media, 2013.

\bibitem{kolmogorov1937}
{\sc A.~Kolmogorov, I.~Petrovsky, and N.~Piskunov}, {\em Investigation of the
  equation of diffusion combined with increasing of the substance and its
  application to a biology problem}, Bull. Moscow State Univ. Ser. A: Math.
  Mech, 1 (1937), pp.~1--25.

\bibitem{lyu2019convergence}
{\sc J.~Lyu, Z.~Wang, J.~Xin, and Z.~Zhang}, {\em Convergence analysis of
  stochastic structure-preserving schemes for computing effective diffusivity
  in random flows}, SIAM Journal on Numerical Analysis, 58 (2020),
  pp.~3040--3067.

\bibitem{majda1994large}
{\sc A.~Majda and P.~Souganidis}, {\em Large scale front dynamics for turbulent
  reaction-diffusion equations with separated velocity scales}, Nonlinearity, 7
  (1994), p.~1.

\bibitem{Meyn:92}
{\sc S.~Meyn and R.~L. Tweedie}, {\em Stochastic Stability of Markov Chains},
  Springer, New York, 1992.

\bibitem{MilsteinTransition}
{\sc G.~Milstein, G.~John, and S.~Vladimir}, {\em Transition density estimation
  for stochastic differential equations via forward-reverse representations},
  Bernoulli, 10, pp.~281--312.

\bibitem{moral2001on}
{\sc P.~D. Moral and A.~Guionnet}, {\em On the stability of interacting
  processes with applications to filtering and genetic algorithms}, Annales De
  L Institut Henri Poincare-probabilites Et Statistiques, 37 (2001),
  pp.~155--194.

\bibitem{del2000branching}
{\sc P.~D. Moral and L.~Miclo}, {\em Branching and interacting particle systems
  approximations of {F}eynman-{K}ac formulae with applications to non-linear
  filtering}, in Seminaire de probabilites XXXIV, Springer, 2000, pp.~1--145.

\bibitem{nolen2012existence}
{\sc J.~Nolen, J.~Roquejoffre, L.~Ryzhik, and A.~Zlato{\v{s}}}, {\em Existence
  and non-existence of {F}isher-{KPP} transition fronts}, Archive for Rational
  Mechanics and Analysis, 203 (2012), pp.~217--246.

\bibitem{nolen2005existence}
{\sc J.~Nolen, M.~Rudd, and J.~Xin}, {\em Existence of {KPP} fronts in
  spatially-temporally periodic advection and variational principle for
  propagation speeds}, Dynamics of PDEs, 2 (2005), pp.~1--24.

\bibitem{nolenxin2006}
{\sc J.~Nolen and J.~Xin}, {\em Reaction-diffusion front speeds in
  spatially-temporally periodic shear flows}, Multiscale Modeling \&
  Simulation, 1 (2003), pp.~554--570.

\bibitem{nolen2008computing}
\leavevmode\vrule height 2pt depth -1.6pt width 23pt, {\em Computing reactive
  front speeds in random flows by variational principle}, Physica D: Nonlinear
  Phenomena, 237 (2008), pp.~3172--3177.

\bibitem{NolenJack2009}
\leavevmode\vrule height 2pt depth -1.6pt width 23pt, {\em Asymptotic spreading
  of {KPP} reactive fronts in incompressible space-time random flows}, Ann
  Inst. H. Poincare, Analyse Non Lineaire, 26 (2009), pp.~815--839.

\bibitem{novikov2007boundary}
{\sc A.~Novikov and L.~Ryzhik}, {\em Boundary layers and {KPP} fronts in a
  cellular flow}, Archive for rational mechanics and analysis, 184 (2007),
  pp.~23--48.

\bibitem{pazy2012semigroups}
{\sc A.~Pazy}, {\em Semigroups of linear operators and applications to
  {P}artial {D}ifferential {E}quations}, vol.~44, Springer Science \& Business
  Media, 2012.

\bibitem{ryzhik2007kpp}
{\sc L.~Ryzhik and A.~Zlato{\v{s}}}, {\em {KPP} pulsating front speed-up by
  flows}, Communications in Mathematical Sciences, 5 (2007), pp.~575--593.

\bibitem{schnaubelt2002well}
{\sc R.~Schnaubelt}, {\em Well-posedness and asymptotic behaviour of
  non-autonomous linear evolution equations}, in Evolution equations,
  semigroups and functional analysis, Springer, 2002, pp.~311--338.

\bibitem{shen2011spectral}
{\sc J.~Shen, T.~Tang, and L.~Wang}, {\em Spectral methods: algorithms,
  analysis and applications}, vol.~41, Springer Science \& Business Media,
  2011.

\bibitem{shen2013finite3d}
{\sc L.~Shen, J.~Xin, and A.~Zhou}, {\em Finite element computation of {KPP}
  front speeds in {3D} cellular and {ABC} flows}, Mathematical Modelling of
  Natural Phenomena, 8 (2013), pp.~182--197.

\bibitem{shen2013finite2d}
\leavevmode\vrule height 2pt depth -1.6pt width 23pt, {\em Finite element
  computation of {KPP} front speeds in cellular and cat's eye flows}, Journal
  of Scientific Computing, 55 (2013), pp.~455--470.

\bibitem{stewart1974generation}
{\sc H.~Stewart}, {\em Generation of analytic semigroups by strongly elliptic
  operators}, Transactions of the American Mathematical Society, 199 (1974),
  pp.~141--162.

\bibitem{Zhongjian2018sharp}
{\sc Z.~Wang, J.~Xin, and Z.~Zhang}, {\em Sharp uniform in time error estimate
  on a stochastic structure-preserving {L}agrangian method and computation of
  effective diffusivity in {3D} chaotic flows}.
\newblock To appear in SIAM Multiscale Model. Simul., arXiv:1808.06309.

\bibitem{WangXinZhang:18}
\leavevmode\vrule height 2pt depth -1.6pt width 23pt, {\em Computing effective
  diffusivity of chaotic and stochastic flows using structure-preserving
  schemes}, SIAM Journal on Numerical Analysis, 56 (2018), pp.~2322--2344.

\bibitem{xin1992existence}
{\sc J.~Xin}, {\em Existence of planar flame fronts in convective-diffusive
  periodic media}, Archive for rational mechanics and analysis, 121 (1992),
  pp.~205--233.

\bibitem{xin2000front}
\leavevmode\vrule height 2pt depth -1.6pt width 23pt, {\em Front propagation in
  heterogeneous media}, SIAM review, 42 (2000), pp.~161--230.

\bibitem{xin2009}
\leavevmode\vrule height 2pt depth -1.6pt width 23pt, {\em An introduction to
  fronts in random media}, vol.~5, Springer Science \& Business Media, 2009.

\bibitem{zu2015}
{\sc P.~Zu, L.~Chen, and J.~Xin}, {\em A computational study of residual {KPP}
  front speeds in time-periodic cellular flows in the small diffusion limit},
  Physica D: Nonlinear Phenomena, 311 (2015), pp.~37--44.

\end{thebibliography}

\end{document}